
\NeedsTeXFormat{LaTeX2e}

\documentclass{amsart}
\usepackage{amsmath}
\usepackage{amssymb}
\usepackage{enumerate}
\usepackage{verbatim}
\newtheorem{theorem}{Theorem}[section]
\newtheorem{lemma}[theorem]{Lemma}
\newtheorem{proposition}[theorem]{Proposition}
\newtheorem{corollary}[theorem]{Corollary}
\newtheorem{thma}{Theorem}



\newcommand{\newnumbered}{\newtheorem}
\newcommand{\newunnumbered}{\newtheorem*}
\newenvironment{proof*}{\begin{proof}}{\end{proof}}
\newcommand{\singlebox}{}
\newcommand{\esinglebox}{}

\newnumbered{assertion}{Assertion}    
\newnumbered{conjecture}{Conjecture}  
\newnumbered{definition}{Definition}[section]
\newnumbered{hypothesis}{Hypothesis}
\newnumbered{remark}[theorem]{Remark}
\newnumbered{note}{Note}
\newnumbered{observation}{Observation}
\newnumbered{problem}{Problem}
\newunnumbered{question}{Question}
\newnumbered{algorithm}{Algorithm}
\newnumbered{example}[theorem]{Example}
\newunnumbered{notation}{Notation} 


\newcommand{\NN}{\mathbb{N}}
\newcommand{\ZZ}{\mathbb{Z}}
\newcommand{\RR}{\mathbb{R}}
\newcommand{\AAA}{\mathcal{A}}
\newcommand{\BBB}{\mathcal{B}}
\newcommand{\CCC}{\mathcal{C}}
\newcommand{\LLL}{\mathcal{L}}
\newcommand{\DDD}{\mathcal{D}}

\newcommand{\FFF}{\mathcal{F}}
\newcommand{\MMM}{\mathcal{M}}
\newcommand{\PPP}{\mathcal{P}}
\DeclareMathOperator{\WWW}{Bow}
\newcommand{\Ms}{\mathcal{M}^\sigma}

\newcommand{\GGG}{\mathcal{G}}
\newcommand{\htop}{h_\mathrm{top}}
\newcommand{\ulim}{\varlimsup}
\newcommand{\llim}{\varliminf}
\newcommand{\eps}{\varepsilon}
\newcommand{\ph}{\varphi}
\newcommand{\wb}{w^\beta}
\newcommand{\phb}{\varphi_r}
\newcommand{\phg}{\varphi_0}

\newcommand{\symdiff}{\bigtriangleup}
\newcommand{\V}{(W)}

\newcommand{\F}{(S)}
\newcommand{\Fp}{(Per)}
\newcommand{\one}{\mathbf{1}}

\DeclareMathOperator{\Per}{Per}
\DeclareMathOperator{\Leb}{Leb}
\DeclareMathOperator{\diam}{diam}

\title[Equilibrium states beyond specification]{Equilibrium states beyond specification and the Bowen property}
\author{Vaughn Climenhaga and Daniel J. Thompson}

\subjclass[2010]{37D35, 37B10, 37B40}
\address{Vaughn Climenhaga \\ Department of Mathematics \\ University of Toronto}
\email{vclimenh@math.toronto.edu}
\address{Daniel J. Thompson\\  Department of Mathematics \\The Ohio State University}
\email{thompson@math.psu.edu}

\thanks{V.C.\ is supported by an NSERC Postdoctoral Fellowship.  D.T.\ is supported by NSF grant DMS-$1101576$.}

\begin{document}

\begin{abstract}
It is well-known that for expansive maps and continuous potential functions, the specification property (for the map) and the Bowen property (for the potential) together imply the existence of a unique equilibrium state.  We consider symbolic spaces that may not have specification, and potentials that may not have the Bowen property, and give conditions under which uniqueness of the equilibrium state can still be deduced. Our approach is to ask that the collection of cylinders which are obstructions to the specification property or the Bowen property is small in an appropriate quantitative sense. This allows us to construct an ergodic equilibrium state with a weak Gibbs property, which we then use to prove uniqueness. 
We do not use inducing schemes or the Perron--Frobenius operator, and we strengthen some previous results obtained using these approaches. 
In particular, we consider $\beta$-shifts and show that the class of potential functions with unique equilibrium states strictly contains the set of potentials with the Bowen property. We give applications to piecewise monotonic interval maps, including the family of geometric potentials for examples which have both indifferent fixed points and a non-Markov structure.  
\end{abstract}

\maketitle

\section{Introduction}
An \emph{equilibrium state} for a topological dynamical system $(X,f)$ and a potential $\ph\in C(X)$ is an invariant measure that maximises the quantity $h_\mu(f) + \int \ph\,d\mu$.  For a symbolic space, every continuous function has at least one equilibrium state. We establish uniqueness in a setting that improves 
previous results by simultaneously relaxing the structural requirements on $X$ and the regularity properties of $\ph$. 

It was shown in \cite{Bo7} that uniqueness holds when $(X,f)$ satisfies expansivity and specification and $\ph$ satisfies the Bowen property (Definition~\ref{def:bowen}); in particular, this is true if $X$ is a mixing shift of finite type and $\ph$ is H\"older continuous.  Working in the symbolic setting (where expansivity is automatic), we give weakened versions of the specification property and the Bowen property that still suffice to prove uniqueness, and verify these conditions for specific examples.  In particular, we obtain the following.

\begin{thma}\label{thm:beta}
Let $(X,f)$ be a $\beta$-shift or a shift with the classical specification property (Definition~\ref{def:spec}).  Then there is a class of potentials $\BBB \subset C(X)$, strictly containing the potentials with the Bowen property,
 such that every $\ph \in \BBB$ has a unique equilibrium state.  Furthermore, this equilibrium state has the weak Gibbs property~\eqref{eqn:wkgibbs} and is the weak* limit of the periodic orbit measures
\begin{equation}\label{eqn:permeas}
\frac 1{\sum_{x\in \Per_n} e^{S_n\ph(x)}} \sum_{x\in \Per_n} e^{S_n\ph(x)}\delta_x.
\end{equation}
As an application, for any $\beta$-transformation, every H\"older continuous function has a unique equilibrium state.
\end{thma}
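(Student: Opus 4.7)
My plan is to reduce Theorem~A to a general abstract uniqueness theorem, to be established elsewhere in the paper, asserting the following: if a symbolic system $X$ admits a decomposition $\LLL(X) = \CCC^p \cdot \GGG \cdot \CCC^s$ in which $\GGG$ satisfies a specification property and $\ph$ satisfies the Bowen property on $\GGG$, together with a pressure gap expressing that the obstructions $\CCC^p \cup \CCC^s$ contribute strictly less than $P(\ph)$ in an appropriate sense, then $\ph$ has a unique equilibrium state, which is weak Gibbs and equals the weak$^*$ limit of the normalized periodic measures~\eqref{eqn:permeas}. To prove such a theorem I would take a weak$^*$-accumulation point $\mu$ of~\eqref{eqn:permeas} as the candidate equilibrium state, use the pressure gap to rule out escape of mass into orbits that spend most of their time in the obstructions, and use specification and Bowen on $\GGG$ to derive a weak Gibbs bound
\[
K^{-1}\,e^{-nP(\ph) + S_n\ph(x)} \leq \mu([w]) \leq K\,e^{-nP(\ph) + S_n\ph(x)}
\]
for $w \in \GGG$ and $x \in [w]$. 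Uniqueness would then follow by a standard Gibbs-comparison argument against any competing ergodic equilibrium state.

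To apply the abstract theorem to a $\beta$-shift $X_\beta$, I would let $\CCC^s = \{\omega_\beta|_n : n \geq 0\}$ be the set of prefixes of the $\beta$-expansion $\omega_\beta$ of $1$, set $\CCC^p = \{\emptyset\}$, and take $\GGG$ to consist of admissible words each of whose nonempty suffixes is lexicographically strictly less than the corresponding prefix of $\omega_\beta$. The classical combinatorial characterization of $\LLL(X_\beta)$ then gives $\LLL = \GGG \cdot \CCC^s$, and the strict lexicographic inequality guarantees free right-concatenation on $\GGG$, yielding specification with zero gap. I would define $\BBB$ as the class of continuous $\ph$ that are Bowen on $\GGG$ and satisfy the pressure gap. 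Since $\CCC^s$ contains at most one word per length, the obstruction pressure is controlled by a single-orbit sum and is strictly less than $P(\ph)$ for Bowen $\ph$ (using positivity of the entropy of the classical equilibrium state), so $\BBB$ contains all Bowen potentials. Strict containment follows by perturbing a Bowen potential along the orbit of $\omega_\beta$ in a way that destroys global Bowen but preserves both Bowen on $\GGG$ and the gap. For shifts with classical specification the decomposition degenerates to $\GGG = \LLL$, $\CCC^p = \CCC^s = \{\emptyset\}$.

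For the H\"older corollary on $\beta$-transformations, I must check that every H\"older $\ph$ lies in $\BBB$. Bowen on $\GGG$ follows from the standard telescoping argument for H\"older potentials. For the pressure gap, a variational analysis identifies the obstruction pressure with $\sup\{\int\ph\,d\mu : \mu \text{ invariant on the orbit closure of } \omega_\beta\}$, and since this orbit closure has topological entropy strictly less than $\htop(X_\beta) = \log\beta$, I can exhibit an invariant measure $\nu$ on $X_\beta$ of positive entropy satisfying $h_\nu + \int\ph\,d\nu > P(\CCC^s, \ph)$, giving the desired gap.

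The hardest step will be the abstract uniqueness theorem. Without global specification or global Bowen, the classical Bowen strategy must be reworked with quantitative bookkeeping of the obstruction contributions, and the pressure gap replaces the uniform Gibbs bounds of the classical setting at every stage. Uniqueness is particularly delicate because the weak Gibbs bound only controls cylinders in $\GGG$, so ruling out a competing ergodic equilibrium state that charges obstruction-heavy sets is precisely what the pressure gap must accomplish. A secondary subtlety is the uniform verification of the pressure gap over all H\"older potentials, which should follow from a general entropy comparison between $X_\beta$ and the orbit closure of $\omega_\beta$ combined with uniform continuity of H\"older potentials.
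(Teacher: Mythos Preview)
Your overall architecture---reduce to an abstract uniqueness theorem for decompositions $\LLL=\CCC^p\GGG\CCC^s$, then verify the hypotheses for $\beta$-shifts via $\CCC^s=\{\omega_\beta|_n\}$ and a freely-concatenable $\GGG$---matches the paper's Theorem~C and the graph-presentation argument in \S\ref{sec:betashifts}. Two substantive gaps remain, however.

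First, for shifts with classical specification you take the trivial decomposition $\GGG=\LLL$, $\CCC^p=\CCC^s=\{\emptyset\}$. That yields uniqueness only for potentials Bowen on all of $\LLL$, so it cannot deliver the ``strictly containing'' clause of the theorem. The paper obtains non-Bowen examples by fixing a subshift $Y\subset X$ and taking a \emph{nontrivial} decomposition $\GGG^\ell=\{w:\sigma^{|w|-\ell}w\notin\LLL(Y)\}$, $\CCC^{s,\ell}=\{w:\text{every length-}\ell\text{ subword of }w\text{ lies in }\LLL(Y)\}$; grid functions constant on the partition $\PPP(X,Y)$ are then Bowen on $\GGG^\ell$ but not on $\LLL$, and the pressure gap becomes $P(Y,\ph)<P(X,\ph)$. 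Your proposed alternative for $\beta$-shifts---perturb along the orbit of $\omega_\beta$---is not what the paper does either (it uses the same grid-function mechanism with $Y=\{0\}$), and you would need to check carefully that such a perturbation stays Bowen on $\GGG$, since points in $\GGG$-cylinders can still visit $[\omega_\beta|_m]$ under iteration.

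Second, your verification of the pressure gap for Bowen (and H\"older) potentials is not the paper's and has a flaw. You assert that the orbit closure of $\omega_\beta$ has topological entropy strictly less than $\log\beta$; this can fail, since for many $\beta$ the point $\omega_\beta$ has dense orbit in $\Sigma_\beta$. The earlier version (``positivity of the entropy of the classical equilibrium state'') is circular unless you have an \emph{a priori} argument that no zero-entropy measure is an equilibrium state. The paper instead gives a direct combinatorial proof (Proposition~\ref{prop:Holder}): it first shows $P(\Sigma_\beta,\ph)>\ph(0)$ via an embedded SFT, then for each $n$ compares $S_n\ph(\omega_\beta)$ to $\Lambda_n(\LLL,\ph)$ by flipping nonzero entries of $\omega_\beta|_n$ to $0$ (which stays in the language by the lexicographic characterisation) and counting the resulting words. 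This yields $\varlimsup\frac1nS_n\ph(\omega_\beta)<P(\ph)$ with an explicit gap, avoiding any appeal to the structure of the orbit closure.
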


Theorem~\ref{thm:beta} summarises results from \S \ref{sec:betashifts},  \S \ref{sect:nbb}, \S \ref{nb:beta} and \S \ref{betaandF}, and is an application of our more general result, Theorem~\ref{thm:main}, which gives explicit conditions on the language of a shift space which guarantee the existence of a unique equilibrium state. Before we explain these conditions, we further discuss our applications. 
Theorem~\ref{thm:beta} generalises previously known results in two directions. On the one hand, for $\beta$-shifts (which generically do not have specification), uniqueness of the equilibrium state was established by Walters for Lipschitz potentials \cite{Wa2}, and by Denker, Keller and Urbanski~\cite{DKU} for potentials $\ph$ with the Bowen property, but only when $\sup \ph < P(\ph)$. In particular, Theorem~\ref{thm:beta} is the first result to establish uniqueness of equilibrium states for every H\"older continuous function on the $\beta$-shift. 

On the other hand, Theorem~\ref{thm:beta} gives new results for potentials without the Bowen property, even for shifts with specification.  We describe a new class of potentials with unique equilibrium states, generalising a variant of the family of \emph{grid functions} defined by Markley and Paul \cite{MP82,IT10}. This class of potentials includes 
the pioneering examples on the full shift studied by Hofbauer \cite{fH77}. Other results on potentials which are not H\"older continuous have appeared in~\cite{PZ06,hH08}. The following result is a special case of our analysis in \S\S \ref{sect:nbb}--\ref{nb:beta}.
\begin{thma}\label{thm:nonHolder}
Let $X$ be a $\beta$-shift or a shift with the 
specification property, and suppose that $X$ contains the fixed point $0=000\cdots$.  Let $k(x)$ be the number of initial $0$s in the sequence $x\in X$, and let $\ph(x) = \phb(x) + \phg(x)$, where $\phb$ has the Bowen property, $\phg(0)=0$, and for $x \neq 0$, $\phg (x) = a_{k(x)}$ for some sequence $a_n$ satisfying $\lim_{n\to\infty} a_n = 0$.  If $|\sum_{n\geq 1} a_n| = \infty$, then $\ph$ does not have the Bowen property.  Nevertheless, if 
\begin{equation} \label{cond:Hof}
\ph(0) < P(X,\ph)
\end{equation}
then there exists a unique equilibrium state $\mu_\ph$ for $\ph$.  Furthermore, $\mu_\ph$ has the weak Gibbs property~\eqref{eqn:wkgibbs} and is the weak* limit of the periodic orbit measures~\eqref{eqn:permeas}.
\end{thma}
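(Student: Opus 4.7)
The plan is to derive Theorem \ref{thm:nonHolder} as a consequence of the general result Theorem \ref{thm:main}, by exhibiting a decomposition of the language that packages all obstructions (both to specification and to the Bowen property) into a controllable small collection. First, to see why $\ph$ fails the Bowen property, I would examine the cylinders $[0^N]$: take $x$ with exactly $N$ leading zeros and $y = 0^\infty$, both in $[0^N]$; the definition of $\phg$ gives $S_N\phg(x) = \sum_{i=1}^N a_i$ and $S_N\phg(y) = 0$, so by the reverse triangle inequality $|S_N\ph(x) - S_N\ph(y)| \geq |\sum_{i=1}^N a_i| - C$, where $C$ bounds the variations of $S_N\phb$. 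The right side is unbounded in $N$ by hypothesis, so $\ph$ is not Bowen.

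For uniqueness, I would write every $w \in \LLL$ as $w = 0^a\, p\, 0^b$ where $p$ begins and ends with a nonzero symbol (or is empty). This yields a decomposition $\LLL = \CCC^p \cdot \GGG \cdot \CCC^s$ with $\CCC^p = \CCC^s = \{0^n : n \geq 0\}$ and $\GGG$ the admissible words beginning and ending with a nonzero symbol. To apply Theorem \ref{thm:main} it suffices to verify: (i) $\GGG$ has an appropriate specification property, (ii) $\ph$ is Bowen on cylinders of $\GGG$, and (iii) the pressure gap $P(\CCC^p \cup \CCC^s, \ph) < P(X, \ph)$ holds. For (i), when $X$ has the classical specification property the connecting words can always be reinserted in the nonzero middle zone; for $\beta$-shifts, the same truncation argument works because the lexicographic obstruction to specification is compatible with admissibility of $\GGG$-concatenations. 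For (ii), if $w \in \GGG$ has length $n$ and $x \in [w]$, the initial $0$-run length of each $\sigma^j x$ with $0 \leq j < n$ is determined by $w$ alone (this is where $w$ not ending in $0$ is essential), so $S_n\phg$ is constant on $[w]$, and the variations of $S_n\ph$ reduce to those of $S_n\phb$, which are bounded.

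The substantive step, and the only one that uses \eqref{cond:Hof}, is verifying (iii). Since $\CCC^p_n = \CCC^s_n = \{0^n\}$ is a singleton, $P(\CCC^p \cup \CCC^s, \ph)$ reduces to $\limsup_{n\to\infty} \frac 1n \sup_{x \in [0^n]} S_n\ph(x)$. For $x \in [0^n]$ with $k(x) = M \geq n$, a direct calculation gives $S_n\phg(x) = \sum_{i=M-n+1}^M a_i$, and since $a_n \to 0$ a Cesaro-type bound gives $\sup_{x \in [0^n]} |S_n\phg(x)| = o(n)$ uniformly in $M$; combined with $\frac 1n S_n\phb(0^\infty) \to \phb(0^\infty)$ and bounded variations of $\phb$, one obtains $P(\CCC^p \cup \CCC^s, \ph) = \phb(0^\infty) = \ph(0)$. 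The hypothesis $\ph(0) < P(X, \ph)$ is then exactly (iii). The main obstacle is really a packaging issue: ensuring that $\GGG$ as defined fits the hypotheses of Theorem \ref{thm:main} cleanly on both $\beta$-shifts and specification shifts without extra fuss. Once (i)--(iii) are in place, Theorem \ref{thm:main} delivers existence, uniqueness, the weak Gibbs property, and convergence of the periodic orbit measures \eqref{eqn:permeas} to $\mu_\ph$.
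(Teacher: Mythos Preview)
Your argument for the failure of the Bowen property is correct, and for shifts with the classical specification property your decomposition works (in fact the prefix collection $\CCC^p$ is superfluous: only the condition that $w$ \emph{ends} in a nonzero symbol is used in your Bowen argument (ii), and the paper indeed takes $\CCC^p=\emptyset$ throughout).

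The genuine gap is in the $\beta$-shift case, and it is not merely a packaging issue. Your collection $\GGG$ of words beginning and ending in a nonzero symbol does \emph{not} have \F-specification on $\Sigma_\beta$. Concretely, for any $N$ with $\wb_N\neq 0$ the word $\wb_1\cdots\wb_N$ lies in your $\GGG$, but in the graph presentation of $\Sigma_\beta$ it labels a path ending at vertex $v_{N+1}$; the shortest return to $v_1$ from $v_{N+1}$ can be arbitrarily long (it depends on where the next nonzero entry of $\wb$ occurs), so no uniform gap size suffices to glue such words. Your sentence ``the lexicographic obstruction to specification is compatible with admissibility of $\GGG$-concatenations'' is simply false for this $\GGG$.

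The paper resolves this by taking a $\GGG$ that simultaneously encodes both obstructions: words that label paths from $v_1$ back to $v_1$ with last symbol nonzero (or from $v_1$ to $v_2$). The price is that $\CCC^s$ must then absorb both the trailing zeros \emph{and} the trailing $\wb$-prefix, so $\CCC^s_n$ is no longer a singleton but has polynomial size, of the form $\{\wb_1\cdots\wb_m\}\cdot A\cdot\{0^\ell\}$ and variants. Showing $P(\CCC^s,\ph)<P(\ph)$ then requires real work: one must show $\frac{1}{n}S_n\ph(x)\leq P(\ph)-\delta'$ uniformly over $x\in[w]$, $w\in\CCC^s_n$, and this is done by a counting argument (changing nonzero entries of $w$ to $0$ and controlling the resulting change in $S_n\phg$ via the partial sums $s_\ell=\sum_{j\leq\ell}|a_j|$) that is substantially more delicate than your Ces\`aro estimate, precisely because $\ph$ is not Bowen.
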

When $X$ has specification, we also obtain results (Theorem~\ref{thm:nonWalters}) where $\phg$ depends in a precise way on the structure of an arbitrary subshift $Y\subset X$. Thus, the class $\BBB$ mentioned in Theorem~\ref{thm:beta} contains many potentials besides those in Theorem~\ref{thm:nonHolder}.

We apply our results to certain piecewise expanding maps of the interval. The following example, which is studied via an application of Theorem~\ref{thm:nonHolder}, is a generalisation of the Manneville--Pomeau map. This example is  developed rigorously in \S \ref{chap:non-symbolic},  and demonstrates the efficacy of Theorem~\ref{thm:nonHolder}. 
\begin{example}\label{eg:betaMP}
Fix $\gamma>0$ and $0<\eps<1$.  Define a piecewise monotonic map of the interval $[0,1]$ by $f(x) = x + \gamma x^{1+\eps} \pmod 1$.  Consider the geometric potential $\ph(x) = -\log |f'(x)|$, which does not have the Bowen property. This system can be modeled by a $\beta$-shift  and the potential $t \ph$ can be modeled by a function satisfying the hypotheses of Theorem~\ref{thm:nonHolder} whenever $t<1$.  As a consequence of Theorem~\ref{thm:nonHolder}, the potential $t \ph$ has a unique equilibrium state for $t<1$ and hence the map $t\mapsto P(t\ph)$ is $C^1$ in this domain. 
\end{example}
When $\gamma=1$, 
we obtain the Manneville--Pomeau map, where the thermodynamics have been thoroughly studied~\cite{PS,U,PW,oS01}, often using the technique of inducing. 
For $\gamma \neq 1$, although a proof using inducing schemes \cite{lsY99,PeSe,BT09} is likely to be possible, a number of technical hypotheses must be verified and to the best of our knowledge, the details  have never been worked out. This example demonstrates that our techniques can be a very useful alternative to inducing.

We now discuss the ideas behind our more general main result, Theorem~\ref{thm:main}, which provides abstract conditions implying uniqueness of the equilibrium state for a potential $\ph$ on a shift space $(X,\sigma)$. We develop the approach that we introduced in \cite{CT}. The idea is that while specification and the Bowen property may not hold on the entire language of the shift space, 
we can still ask for them to hold on a collection of `good' words; then we can require that the set of words that fail to be good is small in an appropriate sense. 

The good words can be chosen to deal with either the failure of the specification property (e.g.\ for $\beta$-shifts), or the failure of the function to satisfy the Bowen property everywhere on the space (e.g.\ the potentials described in Theorem~\ref{thm:nonHolder} on the full shift), or both simultaneously (e.g.\ those same potentials on the $\beta$-shifts).  The ability to deal with both of these non-uniformities concurrently is an important advantage of our approach. 

We use a standard argument to construct an equilibrium measure as the limit of $\delta$-measures supported on a set of orbits corresponding to words of length $n$, with each orbit given a weight proportional to $e^{S_n\ph(x)}$.  A crucial step in our argument is to show that this measure satisfies a Gibbs property on the collection of `good words'. 
This ensures that there is no room for any mutually singular equilibrium measure, and combined with an ergodicity argument, this proves uniqueness. 

We apply our results to expanding piecewise monotonic maps, including the $\beta$-transformation, and also consider cases where the map is non-uniformly expanding, as in Example~\ref{eg:betaMP}.

Our techniques are well adapted to the operation of taking factors. We use results from \cite{CT} to establish uniqueness of equilibrium states for symbolic spaces $(X, \sigma)$ which are factors of $\beta$-shifts. In this case, we prove uniqueness of equilibrium states for potentials $\ph$ which satisfy the Bowen property and the additional hypothesis $\sup \ph - \inf \ph < \htop (X)$.

In \S \ref{results}, we formulate necessary definitions and state Theorem~\ref{thm:main}, which is our main result. In \S \ref{symb-examp}, we apply Theorem~\ref{thm:main} in the symbolic setting, and state various results that prove Theorem~\ref{thm:nonHolder}.  In \S \ref{chap:non-symbolic}, we apply our results to certain interval maps and develop Example~\ref{eg:betaMP} in more detail.  In \S \ref{proof}, we prove Theorem~\ref{thm:main}, and in \S\ref{proof2}, we prove lemmas and propositions from \S\S\ref{symb-examp}--\ref{chap:non-symbolic}.

\section{Definitions and statement of result}\label{results}

\subsection{Notation and general definitions}

A topological dynamical system is a compact metric space $X$ together with a continuous map $f\colon X\to X$.  We restrict our attention to the case where $X \subset A^\NN$ 
 for some finite set $A$, called an \emph{alphabet}, and $f=\sigma$ is the shift map, defined by $\sigma(x)_n = x_{n+1}$. We use the notation $\Sigma^+_b$ for the shift space $\{0, \ldots, b-1\}^\NN$. We use the abbreviation SFT for a shift of finite type.

 Our results (like those in \cite{CT}) apply equally well when $X \subset A^\ZZ$ but all of the examples in this paper are one-sided, so the non-invertible case will be our focus. We recall some basic results and establish our notation. We refer the reader to ~\cite[Chapter 9]{Wa} and \cite{LM} for further background information. 

The \emph{language} of $X$ is the set of finite words that appear in $X$. We denote the language of $X$ by $\LLL$.
To each word $w$ we associate the \emph{cylinder}
\[
[w] = \{ x\in X \mid w = x_1 x_2 \cdots x_{|w|}\},
\]
where $|w|$ denotes the length of the word $w$.  Thus, the language of $X$ can be characterised by $w\in \LLL \Leftrightarrow  [w] \neq \emptyset$. 
Given two words $v=v_1\cdots v_m$ and $w=w_1\cdots w_n$, we write $vw = v_1 \cdots v_m w_1 \cdots w_n$. Given collections of words $\AAA,\BBB \subset \LLL$, we write
\[
\AAA \BBB = \{ vw \in \LLL \mid v\in \AAA, w\in \BBB \}.
\]
Note that only words in $\LLL$ are included in $\AAA\BBB$. Unless otherwise indicated, we use superscripts to index collections of words, and subscripts to index the entries of a given word. That is, if we have a collection of words $\{w\}$, then $w^j$ is the $j^{th}$ word in the collection and $w^j_i$ is the $i^{th}$ entry of the word $w^j$.  We write $\LLL_n$ for the set of words in $\LLL$ with length $n$. Let $C(X)$ denote the space of continuous function on $X$ and given $\ph\in C(X)$, define a function $\ph_n \colon \LLL_n \to \RR$ by
\[
\ph_n(w) = \sup_{x\in [w]} S_n \ph(x),
\]
where $S_n\ph(x) = \ph(x) + \ph(\sigma x) + \cdots + \ph(\sigma^{n-1} x)$.  Given a collection of words $\DDD \subset \LLL$, we write $\DDD_n = \DDD \cap \LLL_n$, and we consider the quantities
\[
\Lambda_n(\DDD,\ph) = \sum_{w\in \DDD_n} e^{\ph_n(w)}.
\]
The (upper capacity) pressure of $\ph$ on $\DDD$ is given by
\[
P(\DDD,\ph) = \ulim_{n\to\infty} \frac 1n \log \Lambda_n(\DDD,\ph).
\]
When $\DDD = \LLL$, we recover the standard definition of topological pressure, and we write $P(\ph)$ or $P(X,\ph)$ in place of $P(\LLL, \ph)$.

Let $\Ms(X)$ denote the space of shift-invariant Borel probability measures on $X$.  We write $h(\mu)$ for the measure-theoretic entropy of $\mu \in \Ms(X)$. The variational principle states that 
\[
P(\ph) = \sup \left\{ h(\mu) + \int \ph\,d\mu \,\Big|\, \mu \in \Ms(X)\right\}.
\]
An invariant probability measure that attains this supremum is called an \emph{equilibrium state} for $\ph$.  
We write $\Per_n$ for the collection of periodic points of period $n$ -- that is, $\Per_n = \{x\in X \mid \sigma^n(x) = x\}$.  Observe that this differs from the notation in~\cite{CT}, where $\Per_n$ denoted points of period \emph{at most} $n$.
\subsection{Specification properties and regularity conditions}
As in~\cite{CT}, we formulate specification properties that apply only to a subset of the language of the space. Our definition applies to naturally defined subsets of the languages of many examples, such as $\beta$-shifts, that do not have specification.

\begin{definition}\label{def:spec}
Given a shift space $X$ and its language $\LLL$, consider a subset $\GGG \subset \LLL$.  Fix $t\in \NN$; any of the following conditions defines a \emph{specification property on $\GGG$} with gap size $t$.
\begin{description}
\item[(W)] For all $m\in \NN$ and $w^1,\dots,w^m\in \GGG$, there exist $v^1,\dots,v^{m-1}\in \LLL$ such that $x := w^1 v^1 w^2 v^2 \cdots v^{m-1} w^m \in \LLL$ and $|v^i| \leq t$ for all $i$.
\item[(S)] Condition \V\ holds, and in addition, the connecting words $v^i$ can all be chosen to have length exactly $t$.
\item[(Per)] Condition \F\ holds, and in addition, the cylinder $[x]$ contains a periodic point of period exactly $|x| + t$.
\end{description}
In the case $\GGG = \LLL$, \F-specification is the well known specification property of the shift. In this case, \F-specification and \Fp-specification are equivalent (this is a folklore result, which follows from the fact that shifts with \F-specification admit synchronizing words \cite{aB88}).
\end{definition}

We now define the regularity condition that we require, which generalises the well known property introduced by Bowen in \cite{Bo7}.
\begin{definition}\label{def:bowen}
Given $n\in \NN$ and $\GGG \subset \LLL$, let
\[
V_n(\GGG,S_n\ph) = \sup \{ |S_n \ph(x) - S_n \ph(y)| \mid x,y\in [w], w\in \GGG_n \}.
\]
A potential $\ph$ has the \emph{Bowen property} on $\GGG$ if $\sup_{n\in \NN} V_n (\GGG,S_n \ph) < \infty$.  Denote the set of such potentials by $\WWW(\GGG)$. If $\ph$ has the Bowen property on $\LLL$, then we just say that $\ph$ has the Bowen property. 
\end{definition}

Note that for shift spaces every H\"older continuous potential has the Bowen property. This is because $V_n(\LLL,S_n\ph) \leq \sum_{k=1}^{n} \sup \{ |\ph(x) - \ph(y)| \mid x,y\in [w], w\in \LLL_k \}$, and H\"older continuity implies that the quantity in the sum decays exponentially in $k$.

\subsection{Main result}
We consider \emph{decompositions} of the language:  collections of words $\CCC^p,\GGG,\CCC^s \subset \LLL$ such that $\CCC^p\GGG\CCC^s = \LLL$. Every word in $\LLL$ can be written as a concatenation of a `good' core (from $\GGG$) with a prefix and a suffix (from $\CCC^p$ and $\CCC^s$).  Given such a decomposition, we consider for every $M\in \NN$ the following `fattened' set of good words
\[
\GGG(M) = \{ uvw \in \LLL \mid u\in \CCC^p, v\in \GGG, w\in\CCC^s, |u|\leq M, |w|\leq M \}.
\] 
Note that $\bigcup_M \GGG(M) = \LLL$, so this gives a filtration of the language.
\begin{thma}\label{thm:main}
Let $(X,\sigma)$ be a subshift on a finite alphabet and $\ph\in C(X)$ a potential.  Suppose there exists collections of words $\CCC^p, \GGG, \CCC^s \subset \LLL$ such that $\CCC^p \GGG \CCC^s = \LLL$ and the following conditions hold:
\begin{enumerate}[(I)]
\item \label{cond:spec}
$\GGG(M)$ has \F-specification for every $M$;
\item \label{cond:bowen}
$\ph \in \WWW(\GGG)$;
\item \label{cond:PR}
The collections $\CCC^s$ and $\CCC^p$ satisfy
\begin{equation}\label{eqn:PR}
\sum_{n\geq 1} \Lambda_n (\CCC^p \cup \CCC^s, \ph) e^{-nP(\ph)} < \infty;
\end{equation}
\end{enumerate}
Then $\ph$ has a unique equilibrium state $\mu_\ph$, which satisfies the following weak Gibbs property:  there exists constants $K', K_M > 0$ such that for every $n\in \NN$ and $w\in \GGG(M)_n$, we have
\begin{equation}\label{eqn:wkgibbs}
K_M \leq \frac{\mu_\ph([w])}{e^{-nP(\ph) + \ph_n(w)}} \leq K'.
\end{equation}
If \F-specification is replaced with \Fp-specification in Condition~\eqref{cond:spec}, then

\begin{equation} \label{permeas}
\mu_\ph = \lim_{n\to\infty} \frac 1{\sum_{x\in \Per_n} e^{S_n\ph(x)}} \sum_{x\in \Per_n} e^{S_n\ph(x)}\delta_x,
\end{equation}
where $\Per_n$ is the collection of periodic orbits of length exactly $n$.
\end{thma}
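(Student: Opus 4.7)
The plan is to follow the Bowen--Ruelle strategy of constructing an equilibrium state from the partition function and then proving a Gibbs property, but carried out on the collection $\GGG(M)$ rather than on all of $\LLL$, using condition~\eqref{cond:PR} to ensure that the `bad' prefixes and suffixes in $\CCC^p\cup\CCC^s$ contribute negligibly. First I would fix reference points $x_w \in [w]$ for each $w\in \LLL_n$ and take a weak* subsequential limit $\mu$ of the Cesaro averages of
\[
\nu_n = \frac{1}{\Lambda_n(\LLL,\ph)} \sum_{w\in \LLL_n} e^{\ph_n(w)} \delta_{x_w}.
\]
A standard Misiurewicz-type argument shows that any such $\mu$ is $\sigma$-invariant and satisfies $h(\mu) + \int \ph\,d\mu = P(\ph)$, so $\mu$ is an equilibrium state.

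The crucial step is to promote $\mu$ to a measure with the weak Gibbs property~\eqref{eqn:wkgibbs} on every $\GGG(M)$. The decomposition $\LLL = \CCC^p\GGG\CCC^s$ writes each $\LLL_n$ as a union over triples $(i,j,k)$ with $i+j+k=n$ of sets $\CCC^p_i\GGG_j\CCC^s_k$, and the summability in~\eqref{eqn:PR} lets me choose $M$ large enough that the contribution from $(i,k)$ with $\max(i,k) > M$ is a small fraction of $\Lambda_n(\LLL,\ph)$; in particular $\Lambda_n(\GGG(M),\ph) \geq c\,\Lambda_n(\LLL,\ph) \geq c'\, e^{nP(\ph)}$. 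The Bowen property on $\GGG$ then gives bounded distortion of $S_n\ph$ on any cylinder of $\GGG(M)$ up to a constant depending on $M$, and $\F$-specification on $\GGG(M)$ produces many distinct extensions of any $w\in\GGG(M)_n$; combining these yields both the upper bound (by counting extensions against the full partition function) and the lower bound (by exhibiting a definite $\nu_n$-mass in $[w]$).

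With the Gibbs property in hand, ergodicity of $\mu$ follows by a standard mixing argument: specification on $\GGG(M)$ connects any two $\GGG(M)$-cylinders $[u], [w]$ by a short bridge, and the lower Gibbs bound converts this into a quantitative correlation estimate, ruling out non-trivial invariant sets. For uniqueness, any ergodic equilibrium state $\nu$ satisfies an upper Gibbs bound $\nu([w]) \leq K e^{-nP(\ph) + \ph_n(w)}$ on all cylinders via a Shannon--McMillan--Breiman argument against $h(\nu) + \int\ph\,d\nu = P(\ph)$. Combined with the lower Gibbs bound for $\mu$ on $\GGG(M)$, this prevents $\nu$ and $\mu$ from being mutually singular, forcing $\nu = \mu$ by the ergodicity of both.

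Finally, for the periodic orbit formula~\eqref{permeas} under $\Fp$-specification, I would replace $x_w$ above by a periodic point of period exactly $|w|+t$ guaranteed by $\Fp$ in $[w]$: this lets me bound $\sum_{x\in\Per_n} e^{S_n\ph(x)}$ from below and above by $\Lambda_{n-t}(\GGG(M),\ph)$ up to controlled multiplicative constants, so the same weak* limit argument delivers the convergence. The main obstacle is the Gibbs step: both the specification and the Bowen property are only available after peeling off a $\CCC^p$-prefix and $\CCC^s$-suffix of total length up to $2M$, and the interplay of these boundary contributions with the counting and distortion estimates requires careful tracking of how all constants depend on $M$, so that the $M$-independent upper Gibbs bound needed in the uniqueness step survives.
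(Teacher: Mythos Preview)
Your outline tracks the paper's proof closely through the construction of $\mu$, the partition-sum estimates, the Gibbs bounds on $\GGG(M)$, and the ergodicity argument. The gap is in the uniqueness step. You claim that any ergodic equilibrium state $\nu$ satisfies a uniform upper Gibbs bound $\nu([w]) \leq K e^{-nP(\ph)+\ph_n(w)}$ on all cylinders ``via a Shannon--McMillan--Breiman argument''. This is not available: SMB together with Birkhoff only gives that for $\nu$-a.e.\ $x$ and every $\eps>0$, one has $\nu([x_1\cdots x_n]) \leq e^{n\eps} e^{-nP(\ph)+S_n\ph(x)}$ for all sufficiently large $n$. The correction $e^{n\eps}$ is subexponential but not bounded, and the ``sufficiently large'' depends on $x$. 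Feeding this into your comparison with the lower Gibbs bound for $\mu$ on $\GGG(M)$ yields $\mu(\DDD_n) \geq K_M e^{-n\eps}\nu(\DDD_n\cap\GGG(M))$, which does not contradict $\mu(\DDD_n)\to 0$. You also silently need $\nu(\GGG(M)_n)$ bounded away from $0$ uniformly in $n$ for the putative $\nu$, and without the uniform upper Gibbs bound for $\nu$ there is no obvious way to get this either.

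The paper replaces this step by a direct entropy--pressure counting estimate (Proposition~\ref{prop:Gngeq2}): using $nP(\ph) = nh(\nu)+n\int\ph\,d\nu \leq \sum_{w\in\LLL_n}\nu([w])(\ph_n(w)-\log\nu([w]))$ and the elementary inequality $\sum a_i(p_i-\log a_i)\leq \log\sum e^{p_i}$, one shows that if $\nu(\DDD_n)\geq\gamma$ then $\Lambda_n(\DDD,\ph)\geq C_1 e^{nP(\ph)}$, and then Proposition~\ref{prop:Gngeq} transfers this to $\DDD\cap\GGG(M)$. Combined with the lower Gibbs bound for $\mu$ on $\GGG(M)$, this gives $\mu(\DDD_n)\geq K_M C_1>0$, contradicting $\mu(\DDD_n)\to 0$. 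This counting argument is the correct substitute for the upper Gibbs bound you wanted, and it is the one nontrivial idea your proposal is missing.
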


\begin{remark}
If $P(\CCC^p \cup \CCC^s, \ph) < P(\ph)$, then Condition~\eqref{cond:PR} holds. 
\end{remark}
\begin{remark}
When $\ph=0$, the conditions in the main theorem of \cite{CT} imply the conditions above except with \V-specification in place of \F-specification in \eqref{cond:spec}. The theorem holds true if we assume \V-specification in \eqref{cond:spec} but it leads to some additional technicalities in the proof. The stronger assumption is made purely out of convenience and is 
satisfied by all examples under consideration here. 
\end{remark}
\begin{remark}
Because we assume that $\GGG(M)$ satisfies \Fp-specification for every $M$, we obtain a stronger result concerning the periodic orbit measures than the corresponding result in~\cite{CT}, where we considered measures supported on periodic points of period \emph{at most} $n$.
\end{remark}
\begin{remark} \label{noatoms}
The Gibbs property \eqref{eqn:wkgibbs} shows that $\mu$ is fully supported on $X$. If $X$ is a non-trivial shift space (i.e.\ contains an infinite number of points), this shows, by ergodicity, that $\mu$ has no atoms.
\end{remark}
\begin{remark} 
Theorem \ref{thm:main} applies both when $X$ is one-sided (i.e.\ $X \subset A^\NN$) and two-sided  (i.e.\ $X \subset A^\ZZ$). The role of the prefix collection $\CCC^p$ seems to be much more important in the two-sided case. Indeed, for all of the examples considered in this paper, which are one-sided, $\CCC^p = \emptyset$. In~\cite{CT}, we gave many two-sided examples ($S$-gap shifts and coded systems) where the prefixes are indispensable. We also note that in the two-sided case, the notation $[w]$ refers to the standard two-sided central cylinder. 
\end{remark}
\section{Symbolic Examples} \label{symb-examp}
In this section, we apply Theorem~\ref{thm:main} to symbolic systems.  In \S \ref{sec:betashifts}, we show that every function on a $\beta$-shift which has the Bowen property satisfies the hypotheses of Theorem~\ref{thm:main}. 
In \S \ref{sect:nbb}, we prove Theorem~\ref{thm:nonHolder} (and more) for shifts with specification. In \S \ref{nb:beta}, we prove Theorem~\ref{thm:nonHolder} for $\beta$-shifts. Combining these results yields the full statements of Theorems~\ref{thm:beta} and~\ref{thm:nonHolder}.
\subsection{$\beta$-shifts}\label{sec:betashifts}
Fix $\beta>1$, write $b=\lceil\beta\rceil$, and let $\wb\in \{0,1,\dots,b-1\}^\NN$ be the greedy $\beta$-expansion of $1$ (see \cite{CT,fB89,Pa,Maia} for details). Then $\wb$ satisfies
\begin{equation}\label{wb}
\sum_{j=1}^{\infty} \wb_j \beta^{-j} = 1,
\end{equation}
and has the property that $\sigma^k(\wb) \preceq \wb \text{ for all } k \geq 1$, where $\preceq$ denotes the lexicographic ordering. The $\beta$-shift is defined by
\begin{equation} \label{lexbeta}
\Sigma_\beta = \left \{x \in \{0, 1, \ldots, b-1\} : \sigma^k (x) \preceq \wb  \text{ for all } k \geq 1 \right \}.
\end{equation}
For the rest of this exposition, we assume that $\wb$ is not eventually periodic. This happens for Lebesgue almost every $\beta$, and is the interesting case for our analysis.  Although our methods apply equally well when $\wb$ is eventually periodic, in this case $\Sigma_\beta$ is a sofic shift, and thus the thermodynamic formalism is already well understood.

We showed in~\cite{CT} that the language of $\Sigma_\beta$ can be decomposed as $\LLL = \GGG \CCC^s$,  where $\CCC^s_n = \{\wb_1 \cdots \wb_n\}$. 
We briefly review the construction, and show that $\GGG(M)$ has \Fp-specification for every $M$. 

Every $\beta$-shift can be presented by a countable state directed labeled graph $\Gamma_\beta$, as follows (see \cite{BH,PfS,CT}).  Consider a countable set of vertices labeled $v_1, v_2, \ldots$. For every $i \geq 1$, we draw an edge from $v_i$ to $v_{i+1}$, and label it with the value $\wb_i$. Next, whenever $\wb_i >0$, for each integer from $0$ to $\wb_i-1$, we draw an edge from $v_i$ to $v_1$ labeled by that value. 

The $\beta$-shift can be characterised as the set of sequences given by the labels of infinite paths through the directed graph which start at $v_1$.  For our set $\GGG$, we take the collection of words labeling a path that begins and ends at the vertex $v_1$.  It is clear that
\begin{enumerate}
\item such paths can be freely concatenated, and each one corresponds to a periodic point -- in particular, $\GGG$ has \Fp-specification with $t=0$;
\item $\GGG(M)$ is the set of words labeling finite paths that begin at $v_1$ and terminate at some vertex $v_i$ such that $i \leq M$.
\item if $\tau_M = \max \{$length of shortest path from  $v_i$ to  $v_1 \mid 0 \leq i \leq M\}$, then $\GGG(M)$ has \Fp-specification with $t=\tau_M$. The `gap' can be made to be exactly $\tau_M$ rather than at most $\tau_M$ by padding out with a string of $0$'s based at $v_1$ if necessary. 
\end{enumerate}

It is clear from the graph presentation of $\Sigma_\beta$ that every word in $\LLL$ can be written as a word from $\GGG$ followed by a word in $\CCC^s = \{ \wb_1 \cdots \wb_n \mid n\geq 1 \}$.  From the remarks above, Condition~\eqref{cond:spec} is satisfied.

Suppose $\ph$ has the Bowen property; then Condition~\eqref{cond:bowen} is immediate.  Thus in order to apply Theorem~\ref{thm:main}, it remains only to show that $\ph$ satisfies Condition~\eqref{cond:PR}.  Because $\CCC^s_n$ is a singleton for all $n$, this amounts to checking that
\begin{equation}\label{eqn:betaPR}
\sum_{n\geq 1} e^{S_n \ph(\wb) - nP(\Sigma_\beta, \ph)} < \infty,
\end{equation}
and thus it suffices to show that
\begin{equation}\label{eqn:betaPR2}
\ulim_{n\to\infty} \frac 1n S_n \ph(\wb) < P(\Sigma_\beta, \ph).
\end{equation}
\begin{proposition}\label{prop:Holder}
Suppose $\ph \in \WWW(\Sigma_\beta)$.  Then~\eqref{eqn:betaPR2} holds. 
\end{proposition}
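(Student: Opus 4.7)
The plan is to reduce \eqref{eqn:betaPR2} to a strict comparison between $a_\infty := \varlimsup_n \tfrac{1}{n}\ph_n(\wb_1\cdots\wb_n)$ and the pressure on the ``good'' subsystem $\GGG$, and then exploit the positive topological entropy of $\Sigma_\beta$ together with the free-concatenation structure of $\GGG$. First, the Bowen hypothesis gives $|S_n\ph(\wb) - \ph_n(\wb_1\cdots\wb_n)| \leq V$ for all $n$, where $V$ is the Bowen constant, so \eqref{eqn:betaPR2} is equivalent to $a_\infty < P(\Sigma_\beta, \ph)$.

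Next I would use the graph description of $\Sigma_\beta$: every word $w \in \LLL_n$ admits a unique decomposition $w = u\cdot(\wb_1\cdots\wb_m)$ with $u \in \GGG_{n-m}$ and $0 \leq m \leq n$, the integer $m$ being determined by the last visit of the associated path to the base vertex $v_1$. Combining this partition with the Bowen property on concatenations yields the two-sided estimate
\[
\Lambda_n(\LLL,\ph) \asymp \sum_{m=0}^n \Lambda_{n-m}(\GGG, \ph)\cdot e^{\ph_m(\wb_1\cdots\wb_m)},
\]
with constants depending only on $V$. Since $\GGG$ satisfies $(\Fp)$-specification with gap zero, the sequence $\log\Lambda_n(\GGG,\ph)$ is (up to an additive constant) superadditive, and so $P(\GGG,\ph) = \lim_n \tfrac{1}{n}\log\Lambda_n(\GGG,\ph)$ exists. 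Taking logs and $\limsup$ in the above display gives $P(\Sigma_\beta,\ph) = \max(P(\GGG,\ph), a_\infty)$, so \eqref{eqn:betaPR2} is equivalent to the strict inequality $P(\GGG,\ph) > a_\infty$.

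To prove this, I would argue by contradiction: suppose $P(\GGG,\ph) \leq a_\infty$, so $P(\Sigma_\beta,\ph) = a_\infty$. Pick a subsequence $n_k$ realizing the limsup and let $\nu$ be a weak-* accumulation point of the empirical measures $\tfrac{1}{n_k}\sum_{j=0}^{n_k-1}\delta_{\sigma^j\wb}$; then $\nu$ is shift-invariant with $\int\ph\,d\nu = a_\infty = P(\ph)$, and the variational principle forces $h(\nu) = 0$, making $\nu$ a zero-entropy equilibrium state.

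The main obstacle is ruling out such a $\nu$. The idea is to produce an invariant measure $\mu$ with $h(\mu) > 0$ and $\int\ph\,d\mu$ close enough to $a_\infty$ that $h(\mu) + \int\ph\,d\mu > a_\infty$, contradicting the variational principle. The construction exploits three ingredients: that $\Sigma_\beta$ has positive topological entropy $\log\beta$; that $\GGG$ has free concatenation with $|\GGG_n| \asymp \beta^n$; and that $\wb$ is not eventually periodic, so $\nu$ is not concentrated on a periodic orbit. Concretely, one pastes long initial segments $\wb_1\cdots\wb_n$ (which approximate the statistics of $\nu$) with freely chosen $\GGG$-words of comparable length via the free-concatenation property; the Bowen regularity of $\ph$ controls the integrals in the weak-* limit, while the exponential abundance of $\GGG$-continuations injects a definite amount of entropy. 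This yields the required $\mu$, and the contradiction completes the proof.
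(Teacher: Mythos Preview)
Your reduction in the first four steps is correct and rather elegant: the decomposition $\LLL = \GGG\CCC^s$ together with the Bowen property does yield $\Lambda_n(\LLL,\ph) \asymp \sum_{m=0}^n \Lambda_{n-m}(\GGG,\ph)e^{\ph_m(\wb_1\cdots\wb_m)}$, the free-concatenation superadditivity gives the needed upper bound $\Lambda_k(\GGG,\ph)\leq Ce^{kP(\GGG,\ph)}$, and hence $P(\Sigma_\beta,\ph)=\max(P(\GGG,\ph),a_\infty)$. The passage to a zero-entropy equilibrium state $\nu$ is also fine.

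The gap is in step 5. Your pasting construction does not yield a measure with free energy strictly above $a_\infty$. If the inserted $\GGG$-words have length comparable to $n$, then the resulting measure $\mu$ satisfies roughly $h(\mu)+\int\ph\,d\mu \approx \tfrac12\log\beta + \tfrac12 a_\infty + \tfrac12 c$, where $c$ is the $\ph$-average over the chosen $\GGG$-words; this exceeds $a_\infty$ only if $\log\beta + c > a_\infty$, which need not hold since $c$ can be as small as $\inf\ph$. The Bowen property you invoke controls the variation of $S_n\ph$ \emph{within} a single cylinder, but says nothing about how $\ph_n(g)$ varies as $g$ ranges over $\GGG_n$. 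Nor can you restrict to $\GGG$-words with $\ph$-average near $a_\infty$: under the contradiction hypothesis $P(\GGG,\ph)\leq a_\infty$, the bound $\Lambda_n(\GGG,\ph)\leq Ce^{na_\infty}$ shows there are at most $Ce^{n\delta}$ words $g\in\GGG_n$ with $\ph_n(g)\geq n(a_\infty-\delta)$, so the entropy you can inject this way is only $O(\delta)$, exactly cancelled by the loss in $\int\ph\,d\mu$. In short, ruling out a zero-entropy equilibrium state for a Bowen potential on a subshift \emph{without} specification is essentially as hard as the proposition itself.

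The paper's proof is entirely different and avoids measures altogether. It first observes $P(\Sigma_\beta,\ph)>\ph(0)$ via a mixing SFT inside $\Sigma_\beta$. Then, writing $a_n$ for the number of nonzero entries in $w=\wb_1\cdots\wb_n$, it splits into two cases. If $a_n\geq\delta n$, one uses the lexicographic description~\eqref{lexbeta}: replacing any subset of the nonzero entries of $w$ by $0$ produces a word still in $\LLL_n$, and the Bowen property bounds the change in $\ph_n$ by $O(k)$ when $k$ entries are flipped. Summing over all $\binom{a_n}{k}$ choices gives $\Lambda_n(\LLL,\ph)\geq e^{\ph_n(w)}(1+e^{-7V})^{a_n}$, which beats $e^{\ph_n(w)}$ by a definite exponential factor. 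If $a_n<\delta n$, the same flipping estimate shows $\ph_n(w)\leq n\ph(0)+7\delta nV$, which is handled by the gap $\ph(0)<P(\ph)$. This combinatorial trick---producing exponentially many words with nearly the same $\ph$-sum by exploiting the order structure of $\Sigma_\beta$---is the key idea your argument is missing.
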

It follows from Proposition~\ref{prop:Holder} and Theorem~\ref{thm:main} that every $\ph \in \WWW(\Sigma_\beta)$ has a unique equilibrium state. In particular, every H\"older continuous potential $\ph$ on $\Sigma_\beta$ has a unique equilibrium state.

\begin{remark}
Equilibrium states for $\beta$-shifts were studied by Walters~\cite{Wa2}, who dealt with the smaller class of Lipschitz potentials, but obtained stronger results than we do regarding properties of the unique equilibrium states.  His approach relies on the Perron--Frobenius operator $\LLL_\ph$ defined by
\[
(\LLL_\ph h)(x) = \sum_{\sigma y=x} e^{\ph(y)} h(y).
\]
In the remarks following~\cite[Corollary 8]{Wa2}, he observes that if a H\"older continuous $\ph$ is such that there exists $M>0$ with
\begin{equation}\label{eqn:cor8}
\LLL_\ph^n \one (x) \leq M e^{nP(\ph)}
\end{equation}
for all $n\geq 0$, then the remainder of the results in~\cite{Wa2} go through. This inequality is a corollary of a key step in the proof of our main theorem: the upper bound in Proposition~\ref{prop:Ln} and the elementary inequality  $\LLL_\ph^n \one(x) \leq \Lambda_n(\LLL,\ph)$ yields~\eqref{eqn:cor8} for every H\"older $\ph$.  Thus in addition to the uniqueness results proved here, we can deduce the following result from~\cite[Theorems 10,13,15]{Wa2}.
\end{remark}

\begin{theorem}\label{thm:after-Walters}
Given a H\"older continuous $\ph$ on a $\beta$-shift $\Sigma_\beta$, there exists a Borel probability measure $\nu$ on $X$ such that $\int \LLL_\ph g \,d\nu = e^{P(\ph)} \int g\,d\nu$ for all $g\in C(\Sigma_\beta)$, and a positive function $h\in C(\Sigma_\beta)$ such that $\LLL_\ph h = e^{P(\ph)} h$ and the following are true:
\begin{enumerate}
\item $\mu_\ph = h\nu$ is a $\sigma$-invariant probability measure and the unique equilibrium state for $\ph$;
\item $e^{-nP(\ph)} \LLL_\ph^n g$ converges uniformly to $h\int g\,d\nu$ for every $g\in C(\Sigma_\beta)$;
\item $\nu \circ \sigma^{-n} \to \mu_\ph$ in the weak* topology;
\item $\LLL_\ph \colon C(\Sigma_\beta) \to C(\Sigma_\beta)$ has spectral radius $e^{P(\ph)}$;
\item the natural extension of $(\Sigma_\beta,\sigma,\mu_\ph)$ is isomorphic to a Bernoulli shift.
\end{enumerate}
Moreover, given two H\"older functions $\ph$ and $\psi$, we have $\mu_\ph = \mu_\psi$ if and only if there exists $g\in C(\Sigma_\beta)$ and $C\in \RR$ such that $\ph - \psi = C + g\circ \sigma - g$.
\end{theorem}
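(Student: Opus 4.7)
The plan is to derive this theorem from Walters' work~\cite{Wa2} by verifying the single hypothesis that was previously unavailable in this generality: the uniform bound $\LLL_\ph^n \one(x) \leq M e^{nP(\ph)}$ for every H\"older $\ph$ on $\Sigma_\beta$. Once this is established, the hypotheses of \cite[Theorems 10, 13, 15]{Wa2} are all in force, and items (2)--(5) together with the cohomology statement follow directly. Item (1) requires only combining Walters' construction of $\mu_\ph = h\nu$ with the uniqueness assertion already delivered by Theorem~\ref{thm:main}.

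To prove the bound, I would write
\[
\LLL_\ph^n \one(x) = \sum_{\sigma^n y = x} e^{S_n \ph(y)},
\]
and note that each $y$ with $\sigma^n y = x$ is of the form $y = w_1 \cdots w_n x$ for some word $w = w_1 \cdots w_n \in \LLL_n$ with $wx \in \Sigma_\beta$. Since $e^{S_n\ph(y)} \leq e^{\ph_n(w)}$ by definition of $\ph_n$, summing over all such $w$ gives
\[
\LLL_\ph^n \one(x) \leq \sum_{w\in \LLL_n} e^{\ph_n(w)} = \Lambda_n(\LLL,\ph).
\]
The proof of Theorem~\ref{thm:main} supplies, via Proposition~\ref{prop:Ln}, a uniform bound $\Lambda_n(\LLL,\ph) \leq M e^{nP(\ph)}$, which is precisely~\eqref{eqn:cor8}.

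With~\eqref{eqn:cor8} secured, I would invoke \cite[Theorem 10]{Wa2} to obtain the eigenmeasure $\nu$ satisfying $\int \LLL_\ph g\,d\nu = e^{P(\ph)} \int g\,d\nu$ and a strictly positive continuous eigenfunction $h$ with $\LLL_\ph h = e^{P(\ph)} h$, normalized so that $\int h\,d\nu = 1$. The product $\mu_\ph = h\nu$ is automatically $\sigma$-invariant, and our Theorem~\ref{thm:main} (which applies to every H\"older $\ph$ by Proposition~\ref{prop:Holder}) identifies $h\nu$ as \emph{the} unique equilibrium state for $\ph$; this gives (1). The spectral conclusions (2) and (4) then come from \cite[Theorem 10]{Wa2}, the weak* convergence (3) from \cite[Theorem 13]{Wa2}, and the Bernoulli property (5) from \cite[Theorem 15]{Wa2}. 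The cohomology characterization $\mu_\ph = \mu_\psi$ reduces, via uniqueness of the eigendata, to the standard transfer-operator argument in~\cite{Wa2}.

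The main obstacle is not the logical deduction but the verification of~\eqref{eqn:cor8}: this is a strong a priori bound that was not previously available for general H\"older potentials on $\beta$-shifts precisely because of the failure of specification. The real work is therefore already embedded in the pressure estimates underlying Theorem~\ref{thm:main}; the present proof is essentially a bookkeeping step that packages those estimates for use with Walters' operator-theoretic machinery.
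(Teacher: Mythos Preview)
Your proposal is correct and matches the paper's own argument essentially verbatim: the paper derives Theorem~\ref{thm:after-Walters} by observing (in the remark preceding the statement) that the elementary inequality $\LLL_\ph^n \one(x) \leq \Lambda_n(\LLL,\ph)$ together with the upper bound in Proposition~\ref{prop:Ln} yields~\eqref{eqn:cor8}, after which \cite[Theorems 10, 13, 15]{Wa2} apply directly. Your write-up is slightly more explicit about the preimage computation and about invoking Proposition~\ref{prop:Holder} to justify applying Theorem~\ref{thm:main}, but the substance is identical.
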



\subsection{Non-Bowen potentials} \label{sect:nbb}
Let $X\subset \Sigma_d^+$ be a shift space, and fix an arbitrary subshift $Y\subset X$.  We describe a class of potentials on $X$ for which the Bowen property fails due to a detoriation in the regularity of the potential at points close to $Y$. We give conditions under which our main theorem can be applied to give a unique equilibrium state. 

Our potentials are similar in spirit to the functions considered in~\cite{fH77,MP82} and \S 7 of \cite{IT10}. A motivating example, described fully in \S\ref{MPmap}, is when $Y$ is the fixed point at $0$, and the function models the geometric potential for the Manneville--Pomeau map.

In general, when $Y$ is a non-trivial subshift, the (lack of) regularity in our class of potentials is allowed to depend in a very precise way on the structure of $Y$, as follows.  Consider the 
set
\[
\FFF(X,Y) :=\{w\in \LLL(X) \setminus \LLL(Y) \mid w_1\cdots w_{|w|-1} \in \LLL(Y)\},
\]
whose elements are forbidden words of minimal length for $Y$ in $X$. 
Observe that $[v] \cap [w] = \emptyset$ for every $v,w\in \FFF(X,Y)$, and define a countable partition of $X$ by
\[
\PPP(X,Y) := \left \{ [w] \mid w \in \FFF(X,Y) \right \} \cup \{Y\}.
\]
For example, if  $X = \{0,1\}^\NN$ and $Y =\{0\}$, then 
\[
\PPP(X,Y)= \{ \{0\}, [1], [01], [001], [00001], \ldots\}.
\]
If $X = \{0,1,2\}^\NN$ and $Y = \{0,1\}^\NN$, then 
\[
\PPP(X,Y)= \{ Y, [2], [02], [12], [002], [012], \ldots\}.
\]
\begin{definition}\label{def:gridfn}
A \emph{grid function} for the partition $\PPP(X,Y)$ is a function $\phg$ that can be written as $\phg = \sum_{w\in \FFF(X,Y)} a_w \one_{[w]}$, where $\{a_w\}$ are real numbers such that $\lim_{n\to\infty} \max_{w\in\FFF(X,Y)_n} a_w = 0$. 
\end{definition}
Let $\AAA = \AAA(X,Y)$ be the set of potential functions $\ph = \phb + \phg$ such that $\phb\in \WWW(\LLL(X))$ (the subscript $r$ denotes `regular') and $\phg = \sum_w a_w \one_{[w]}$ is a grid function for $\PPP(X,Y)$.  In many cases, a function $\ph\in\AAA$ does not have the Bowen property on $X$ (see Proposition~\ref{prop:Bowenornot}).  However, given $\ph \in \AAA$, there is a natural way to choose a collection $\GGG$ on which $\ph$ has the Bowen property. This gives us a natural setting where our main theorem may be applied.

\begin{theorem}\label{thm:nonWalters}
Suppose $X \subset A^\NN$ is a shift with specification on a finite alphabet, 
and let $Y\subset X$ be an arbitrary subshift.  Consider the class of potentials $\BBB = \{ \ph\in \AAA(X,Y) \mid P(Y,\ph) < P(X,\ph)\}$.  Then every $\ph\in \BBB$ has a unique equilibrium state. Furthermore, this equilibrium state has the weak Gibbs property~\eqref{eqn:wkgibbs} and is the weak* limit of the periodic orbit measures
\[
\frac 1{\sum_{x\in \Per_n} e^{S_n\ph(x)}} \sum_{x\in \Per_n} e^{S_n\ph(x)}\delta_x.
\]
\end{theorem}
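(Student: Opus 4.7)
The plan is to apply Theorem~\ref{thm:main} with a decomposition $\CCC^p\GGG\CCC^s=\LLL$ engineered so that the irregularity of $\phg$ is confined to the suffix collection. I would take $\CCC^p=\{\emptyset\}$, $\CCC^s=\LLL(Y)$, and let $\GGG$ be the collection of nonempty words that end with some element of $\FFF(X,Y)$, together with the empty word. Every $w\in\LLL$ admits a left-to-right parsing into maximal $\FFF$-blocks followed by a final $\LLL(Y)$-tail, so $w=uv$ with $u\in\GGG$ (a concatenation of $\FFF$-words) and $v\in\CCC^s$.

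For Condition~\eqref{cond:bowen}, suppose $u=v^{(1)}\cdots v^{(r)}\in\GGG_n$ with each $v^{(i)}\in\FFF(X,Y)$ and $x\in[u]$. For every $0\leq k<n$ with $k\leq n-|v^{(r)}|$, the segment $u_{k+1}\cdots u_n$ contains some $v^{(j)}$ entirely and hence fails to lie in $\LLL(Y)$, so $\sigma^k x$ first escapes $Y$ via a word determined by $u$ and $\phg(\sigma^k x)$ depends only on $u$. The sole positions where $\phg(\sigma^k x)$ can vary with the extension $x_{n+1},\ldots$ are $k\in(n-\ell(v^{(r)}),n-1]$, where $\ell(v)$ denotes the length of the longest $\LLL(Y)$-suffix of $v$, and on those positions $|\phg(\sigma^k x)|\leq\alpha_{n-k+1}$ with $\alpha_m:=\sup\{|a_v|:v\in\FFF(X,Y),\,|v|\geq m\}\to 0$. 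Hence $V_n(\{u\},S_n\phg)\leq 2\sum_{m=2}^{\ell(v^{(r)})+1}\alpha_m$, uniformly bounded provided $\sup_{v\in\FFF(X,Y)}\ell(v)<\infty$; combined with $\phb\in\WWW(\LLL)$ this yields $\ph\in\WWW(\GGG)$. Condition~\eqref{cond:spec} is inherited from $X$: since $\GGG(M)\subseteq\LLL$ and $X$ has \F-specification with some fixed gap $t$, concatenations with $t$-gaps remain in $\LLL$, and the folklore synchronizing-words upgrade noted after Definition~\ref{def:spec} promotes this to \Fp-specification. For Condition~\eqref{cond:PR}, the first remark after Theorem~\ref{thm:main} reduces matters to $P(\CCC^s,\ph)<P(\ph)$, and this is where the hypothesis $P(Y,\ph)<P(X,\ph)$ enters. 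For $w\in\LLL(Y)_n$ and $x\in[w]$, each $\sigma^k x$ either lies in $Y$ (where $\phg=0$) or has first escape at position $j>n-k$, giving $|S_n\phg(x)|\leq\sum_{m=2}^{n+1}\alpha_m=o(n)$ by Ces\`aro; together with $[w]\cap Y\neq\emptyset$ and Bowen for $\phb$, this yields $\ph_n(w)\leq\sup_{y\in[w]\cap Y}S_n\phb(y)+O(1)+o(n)$, hence $P(\CCC^s,\ph)\leq P(Y,\ph)<P(\ph)$.

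The main obstacle is the uniform bound $\sup_{v\in\FFF(X,Y)}\ell(v)<\infty$: it holds automatically when $Y$ is a subshift of finite type (with $\ell(v)$ bounded by the maximal length of a forbidden word of $Y$), but may fail for general subshifts. When it fails the decomposition must be refined, for instance by restricting $\GGG$ to words whose terminating $\FFF$-block has bounded $\ell$ and enlarging $\CCC^s$ to absorb the overflow; the pressure gap persists via the same Ces\`aro estimate because $P(Y,\ph)<P(\ph)$ leaves room for subexponential corrections. Once the three conditions of Theorem~\ref{thm:main} are verified, uniqueness of the equilibrium state, the weak Gibbs property~\eqref{eqn:wkgibbs}, and the periodic-orbit convergence formula all follow directly.
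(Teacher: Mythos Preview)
Your decomposition with $\CCC^s=\LLL(Y)$ makes the pressure gap (Condition~\eqref{cond:PR}) immediate via the Ces\`aro estimate, but it pushes all the difficulty onto Condition~\eqref{cond:bowen}, and there the argument has a genuine gap for non-SFT $Y$. You correctly isolate the obstruction: the Bowen variation of $\phg$ on a word $u\in\GGG_n$ is bounded by $2\sum_{m=2}^{\ell(v^{(r)})+1}\alpha_m$, and since the hypothesis on $\phg$ gives only $\alpha_m\to 0$ (not $\sum_m\alpha_m<\infty$), this blows up once $\sup_{v\in\FFF(X,Y)}\ell(v)=\infty$. Concretely, take $X=\{0,1,2\}^\NN$ and $Y$ defined by forbidding $\{20^n1:n\geq 0\}$; then $20^n1\in\FFF(X,Y)$ with $\ell(20^n1)=n+1$, and for $u=20^n1$ the variation over $[u]$ is of order $\sum_{m\leq n}\alpha_m$, unbounded in $n$. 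Your proposed fix---absorbing $\FFF$-blocks with large $\ell$ into $\CCC^s$---is not worked out and does not obviously terminate: once $v^{(r)}$ is absorbed, the new terminal block $v^{(r-1)}$ may also have large $\ell$, and the cascade can push essentially all of $\LLL$ into $\CCC^s$, destroying the pressure gap.

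The paper takes a complementary route that avoids this entirely. Rather than fixing $\CCC^s=\LLL(Y)$, it introduces a parameter $\ell$ and sets
\[
\GGG^\ell=\{w\in\LLL(X):w_{|w|-\ell+1}\cdots w_{|w|}\notin\LLL(Y)\},\qquad
\CCC^{s,\ell}=\{w\in\LLL(X):\text{every length-}\ell\text{ subword of }w\text{ lies in }\LLL(Y)\}.
\]
With this choice the Bowen property on $\GGG^\ell$ is trivial for \emph{every} $\ell$: at most the last $\ell$ positions can see variable $\phg$, giving the uniform bound $2\ell\|\phg\|$ with no hypothesis on $\sum\alpha_m$ or on $\FFF(X,Y)$. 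The price is that $\CCC^{s,\ell}=\LLL(Y_\ell)$ for the SFT approximation $Y_\ell\supset Y$, so the pressure gap requires choosing $\ell$ large enough that $P(Y_\ell,\ph)<P(X,\ph)$, which follows from $P(Y_\ell,\ph)\to P(Y,\ph)$. In short, you make~\eqref{cond:PR} easy and founder on~\eqref{cond:bowen}; the paper does the reverse, and the SFT-approximation step is precisely the missing ingredient your hand-waved refinement would need to supply.
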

\begin{proof}
We write down suitable collections $\GGG$ and $\CCC^s$ in order to apply our general theorem.  For any $\ell \geq 0$, we can define the collections
\begin{equation}\label{eqn:GCs}
\begin{gathered}
\GGG^\ell =  \{ w\in \LLL(X) \mid \sigma^{|w|-\ell}(w) \notin \LLL(Y)\}, \\
\CCC^{s, \ell} = \{ w\in \LLL(X) \mid w_k  \cdots w_{k+\ell} \in \LLL(Y) \text{ for all } 1\leq k\leq |w|-\ell \}.
\end{gathered}
\end{equation}
For a suitably chosen $\ell$, we will let $\GGG= \GGG^\ell$ and $\CCC^s = \CCC^{s, \ell}$. Given an arbitrary $w\in \LLL(X)$, we can decompose $w$ as $uv$, where every subword of $v$ with length $\ell$ is in $\LLL(Y)$, and hence $v\in \CCC^s$, while the word comprising the last $\ell$ symbols of $u$ is not in $\LLL(Y)$, and hence $u\in \GGG$. 
\begin{lemma}\label{lem:WaltersonG}
$\ph\in \WWW(\GGG^\ell)$ for every $\ph\in\AAA(X,Y)$ and $\ell \geq 1$.
\end{lemma}
\begin{proof*}
It suffices to show that $\phg\in\WWW(\GGG^{\ell})$.  Given $w\in \GGG^{\ell}$, observe that for all $0 \leq k \leq |w| - \ell$,  $\phg$ is constant on each $[\sigma^k(w)]$.  It follows that for every $x,y\in [w]$, we have
\[
\singlebox
|S_n\phg(x) - S_n\phg(y)| \leq \sum_{j=N-\ell+1}^N |\phg(\sigma^j(x)) - \phg(\sigma^j(y))| \leq 2\ell \|\phg\|.
\esinglebox\qedhere
\]
\end{proof*}
\begin{lemma}\label{lem:lexists}
There exists $\ell$ such that $P(\CCC^{s,\ell},\ph) < P(X,\ph)$.
\end{lemma}
\begin{proof}
For ease of exposition, we break the proof into two cases.

\emph{Case 1: Y is an SFT or there exists an SFT $Z$ such that $Y=Z\cap X$.}
Let $\FFF$ be a finite collection of forbidden words which describes the SFT (see \cite{LM} for details), and take $\ell = \max\{ |w| \mid w \in \FFF\}$. Then $\CCC^{s, \ell} = \LLL(Y)$ and a short calculation combined with the assumption that $\ph \in \BBB$  shows that $P(\CCC^{s,\ell}, \ph) = P(Y, \ph) < P(X, \ph)$. 

\emph{Case 2: $Y$ is an arbitrary subshift.}
Let $Z_\ell$ be the SFT whose forbidden words are all words of length at most $\ell$ in $A^\NN \setminus \LLL(Y)$, and let $Y_\ell = Z_\ell \cap X$.  We have $\bigcap_\ell Y_\ell = Y$.  An easy generalisation of the proof of Proposition 4.4.6 of~\cite{LM}  shows that 
$\lim_{\ell\to\infty} P(Y_\ell, \ph) = P(Y, \ph)$. Since $P(Y, \ph) < P(X, \ph)$, we can choose $\ell$ sufficiently large so that $P(Y_\ell, \ph) < P(X, \ph)$.  Just as in case 1 of the proof, a short calculation shows that $P(\CCC^{s, \ell},\ph) = P(Y_\ell,\ph)$. 
\end{proof}
\begin{remark}\label{rmk:nospecyet}
Up to this point, we have not used the fact that the shift has specification.  In particular, Lemma~\ref{lem:WaltersonG} shows that for \emph{all} subshifts $Y\subset X$ and $\ph\in \AAA(X,Y)$, if $\GGG \subset \GGG^\ell$ for some $\ell\geq 1$, then $\ph\in \WWW(\GGG)$. 
\end{remark}
Returning to the proof of Theorem~\ref{thm:nonWalters}, we let $\ell$ be as in Lemma~\ref{lem:lexists} and let $\GGG,\CCC^s$ be as in~\eqref{eqn:GCs}.  We check the hypotheses of Theorem~\ref{thm:main}: 
Since $\LLL(X)$ has specification, and thus \Fp-specification, $\GGG(M)$ has \Fp-specification for every $M$; Condition~\eqref{cond:bowen} holds by Lemma~\ref{lem:WaltersonG}; and Condition~\eqref{cond:PR} follows from Lemma~\ref{lem:lexists}.  Thus for $\ph\in \BBB$, all the hypotheses of Theorem~\ref{thm:main} are satisfied, so the result follows. 
\end{proof}
The following proposition gives necessary and sufficient conditions for $\ph$ to have the Bowen property. 
\begin{proposition}\label{prop:Bowenornot}
Consider $\ph \in \AAA$ and let $\phg = \sum a_w \one_{[w]}$ be the associated grid function.  Then $\ph\in \WWW(\LLL(X))$ if and only if there is $V < \infty$ such that for every $w\in \LLL(Y)$ and $x\in [w]$, we have $|S_{|w|}\phg(x)| \leq V$.
\end{proposition}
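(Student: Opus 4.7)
The plan is to reduce the equivalence to a statement about $\phg$ alone.  Since $\phb \in \WWW(\LLL(X))$ is given, the triangle inequality for $V_n(\LLL,\cdot)$ shows that $\ph \in \WWW(\LLL(X))$ if and only if $\phg \in \WWW(\LLL(X))$.  The structural fact I will lean on is that $\phg$ vanishes on $Y$: every $v \in \FFF(X,Y)$ satisfies $v \notin \LLL(Y)$, hence $[v] \cap Y = \emptyset$, so $\phg(y) = 0$ for all $y \in Y$ and consequently $S_n\phg(y) = 0$ whenever $y \in Y$.

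For the forward direction, suppose $\phg \in \WWW(\LLL(X))$ with constant $V := \sup_n V_n(\LLL, S_n\phg) < \infty$.  Given $w \in \LLL(Y)$, since $Y$ is a subshift I may choose $y \in Y \cap [w]$, and then for any $x \in [w]$,
\[
|S_{|w|}\phg(x)| = |S_{|w|}\phg(x) - S_{|w|}\phg(y)| \leq V,
\]
which gives the desired uniform bound.

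For the reverse direction, assume the hypothesis with constant $V$, fix $w \in \LLL(X)$ of length $n$, and take $x,y \in [w]$.  The key step is to identify the threshold along the orbit after which $\phg(\sigma^k \cdot)$ ceases to be determined by $w$ alone: let $k_0 = k_0(w)$ be the smallest $k \in \{0,1,\dots,n\}$ for which $w_{k+1}\cdots w_n \in \LLL(Y)$, where the empty suffix (at $k=n$) is declared to be in $\LLL(Y)$ so that $k_0$ is well-defined, and note that $k_0$ depends only on $w$.  For each $0 \leq k < k_0$, the suffix $w_{k+1}\cdots w_n$ is not in $\LLL(Y)$, so by minimality some shortest prefix $v := w_{k+1}\cdots w_{k+j}$ (with $j \leq n-k$) lies in $\FFF(X,Y)$; then $\sigma^k x, \sigma^k y \in [v]$ and $\phg(\sigma^k x) = a_v = \phg(\sigma^k y)$, so these indices contribute zero to $S_n\phg(x) - S_n\phg(y)$.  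For $k_0 \leq k \leq n-1$, writing $u := w_{k_0+1}\cdots w_n \in \LLL(Y)$ one has $\sigma^{k_0} x \in [u]$ and
\[
\sum_{k=k_0}^{n-1} \phg(\sigma^k x) = S_{|u|}\phg(\sigma^{k_0} x),
\]
which by hypothesis has absolute value at most $V$; the same bound holds for $y$.  Combining, $|S_n\phg(x) - S_n\phg(y)| \leq 2V$ uniformly in $n$ and $w$, giving $\phg \in \WWW(\LLL(X))$.

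The forward direction is essentially immediate once the vanishing of $\phg$ on $Y$ is observed.  The substantive content, and the main obstacle, is in the reverse direction: one needs to identify the correct splitting point $k_0(w)$ separating the $w$-determined portion of the Birkhoff sum from the ``tail'' portion controlled directly by the hypothesis.  Once this splitting is in place, the bound falls out of two applications of the hypothesis.
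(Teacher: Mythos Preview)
Your proof is correct and follows essentially the same route as the paper's: reduce to $\phg$, use the vanishing of $\phg$ on $Y$ together with a point $y\in Y\cap[w]$ for the forward direction, and for the reverse direction split the Birkhoff sum at the index $k_0 = \min\{k \mid \sigma^k(w)\in\LLL(Y)\}$ (the paper's $m$), noting that $\phg$ is constant on each $[\sigma^k(w)]$ for $k<k_0$ and that the tail is controlled by two applications of the hypothesis to obtain the bound $2V$. Your write-up is slightly more explicit about why $\phg$ is constant on those cylinders (identifying the prefix $v\in\FFF(X,Y)$), but the argument is the same.
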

\begin{corollary}\label{cor:Bowenornot}
If $Y=\{0\}$ and $\ph=\phb + \phg\in\AAA$ with $\phg = \sum a_w \one_{[w]}$, then $\ph\in\WWW(\LLL(X))$ if and only if $\sum_n a_{0^n z}$ converges for every $z\in A\setminus \{0\}$.
\end{corollary}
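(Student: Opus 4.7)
The plan is to deduce Corollary~\ref{cor:Bowenornot} directly from Proposition~\ref{prop:Bowenornot} by making the Birkhoff sums appearing there completely explicit. Specializing to $Y = \{0\}$, we have $\LLL(Y) = \{0^n : n \geq 0\}$, and $\PPP(X, Y)$ is the partition into $\{\bar 0\}$ together with the cylinders $[0^m z]$ for $m \geq 0$ and $z \in A \setminus \{0\}$, on each of which $\phg$ is constantly $a_{0^m z}$ (and $\phg(\bar 0) = 0$). So Proposition~\ref{prop:Bowenornot} reduces to the assertion that $\ph \in \WWW(\LLL(X))$ if and only if there exists $V < \infty$ with $|S_n \phg(x)| \leq V$ for every $n \geq 1$ and every $x \in [0^n]$.

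The central calculation is to evaluate $S_n \phg(x)$. For $x = \bar 0$ it is zero; otherwise set $k = k(x) \geq n$ and $z = x_{k+1} \in A \setminus \{0\}$. Since $\sigma^j x \in [0^{k-j} z]$ for each $0 \leq j < k$, collecting terms gives
\[
S_n \phg(x) \;=\; \sum_{j=0}^{n-1} a_{0^{k-j} z} \;=\; T^z_k - T^z_{k-n}, \qquad T^z_n := \sum_{m=1}^{n} a_{0^m z}.
\]
Letting $(k, n)$ range over $k \geq n \geq 1$ and using finiteness of $A$ to unify the constant over $z$, the Bowen condition thus translates into the uniform bound $|T^z_b - T^z_a| \leq V$ for all $0 \leq a \leq b$ and every $z \in A \setminus \{0\}$, equivalently to boundedness of each partial-sum sequence $(T^z_n)_n$.

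The concluding step is to identify this boundedness with convergence of $\sum_n a_{0^n z}$ for each $z$. Convergence plainly gives bounded partial sums; the converse is where the decay $a_{0^n z} \to 0$ built into the grid function definition is used, and this series-theoretic equivalence is the subtlest point of the argument. Everything else is careful bookkeeping of how the shifts $\sigma^j x$ traverse the cells of $\PPP(X, Y)$, so the Birkhoff-sum computation above is the natural heart of the proof and the place where I would expect to spend the most care.
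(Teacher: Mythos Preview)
Your overall strategy matches the paper's: both deduce the corollary from Proposition~\ref{prop:Bowenornot} by computing the Birkhoff sums $S_n\phg(x)$ explicitly. Your computation is in fact more careful than the paper's, which only treats the special points $x\in[0^Nz]$ (so $k(x)=N$) and records $S_N\phg(x)=T^z_N$, whereas you handle all $x\in[0^n]$ and obtain the telescoped form $S_n\phg(x)=T^z_k-T^z_{k-n}$. Your reduction of the Bowen condition to uniform boundedness of the partial-sum sequences $(T^z_n)_n$ is correct.

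The gap is in the final step, which you rightly flag as ``the subtlest point'' but do not prove: you claim that boundedness of $(T^z_n)$ together with $a_{0^nz}\to 0$ forces convergence of $\sum_n a_{0^nz}$. This is false. For a concrete counterexample take $a_n=\sin(\log n)-\sin(\log(n-1))$; by the mean value theorem $|a_n|\leq 1/(n-1)\to 0$, while $T_n=\sin(\log n)$ is bounded but oscillates forever. The decay $a_n\to 0$ buys nothing here. What your argument actually establishes is the equivalence of $\ph\in\WWW(\LLL(X))$ with \emph{boundedness} of each $(T^z_n)$, and that is where it should stop. The paper's own proof makes the same tacit identification without comment (writing ``$V=\max_z|\sum_{n\geq 0}a_{0^nz}|<\infty$'' and treating its failure as unboundedness of the Birkhoff sums), so the discrepancy is really an imprecision in the corollary's wording; the way the result is invoked elsewhere---e.g.\ the criterion $|\sum_{n\geq 1}a_n|=\infty$ in Theorem~\ref{thm:nonHolder}---is consistent with the bounded-partial-sums reading.
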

We briefly mention a condition under which it is easy to apply Theorem~\ref{thm:nonWalters}.  Observe that whenever $X$ has specification and $\ph = \phb + \phg \in \AAA(X,Y)$ for some subshift $Y\subset X$, the unique equilibrium state of $\phb$ is fully supported on $X$.  Thus we often have $P(Y,\phb) < P(X,\phb)$; for example, this holds whenever $\phb|_Y$ has a unique equilibrium state on $Y$.  In this case there is a gap between the two pressures, and we put a condition on $\phg$ that guarantees the persistence of this gap for $\ph$.
\begin{corollary}\label{cor:BR}
Let $X$ have specification and let $Y\subset X$ be an arbitrary subshift.  Suppose $\ph = \phb +\phg \in \AAA(X,Y)$ satisfies
\begin{equation}\label{eqn:BR}
- \inf \phg < P(X,\phb) - P(Y,\phb).
\end{equation}
Then $\ph$ has a unique equilibrium state.
\end{corollary}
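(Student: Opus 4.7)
The plan is to verify the hypothesis $P(Y,\ph) < P(X,\ph)$ of Theorem~\ref{thm:nonWalters}, which then supplies uniqueness directly. Two simple observations drive the argument. First, a grid function $\phg = \sum_{w\in \FFF(X,Y)} a_w \one_{[w]}$ vanishes identically on $Y$, since each $w \in \FFF(X,Y)$ is a minimal forbidden word for $Y$ in $X$, so $[w] \cap Y = \emptyset$. Consequently $\ph|_Y = \phb|_Y$ as functions on $Y$, and therefore $P(Y,\ph) = P(Y,\phb)$.

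For the other side, I would obtain a lower bound on $P(X,\ph)$ by testing the variational principle against an equilibrium state for $\phb$. Since $X$ is a compact symbolic space and $\phb$ is continuous, upper semi-continuity of entropy guarantees the existence of an invariant measure $\mu$ on $X$ with $h(\mu) + \int \phb\,d\mu = P(X,\phb)$. Then
\[
P(X,\ph) \geq h(\mu) + \int \ph \, d\mu = P(X,\phb) + \int \phg \, d\mu \geq P(X,\phb) + \inf \phg.
\]
Combining this with the hypothesis~\eqref{eqn:BR} yields $P(X,\ph) > P(Y,\phb) = P(Y,\ph)$, so $\ph \in \BBB$ in the notation of Theorem~\ref{thm:nonWalters}, and that theorem supplies the unique equilibrium state.

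There is no substantive obstacle here; the corollary simply packages the reasoning that adding a grid function $\phg$ to $\phb$ depresses the pressure on $X$ by at most $|\inf \phg|$, while leaving the pressure on $Y$ unchanged. The quantitative assumption~\eqref{eqn:BR} is precisely what is needed to ensure that the pressure gap already present between $P(X,\phb)$ and $P(Y,\phb)$ survives perturbation by $\phg$.
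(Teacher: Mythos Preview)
Your proof is correct and follows essentially the same route as the paper's: both verify $P(Y,\ph)<P(X,\ph)$ and invoke Theorem~\ref{thm:nonWalters}, using $\phg|_Y=0$ for the equality $P(Y,\ph)=P(Y,\phb)$ and the bound $P(X,\ph)\geq P(X,\phb)+\inf\phg$. The paper derives this last inequality via monotonicity of pressure (writing $\phg^-=\min(0,\phg)$ and using $P(X,\phb+\phg)\geq P(X,\phb+\phg^-)\geq P(X,\phb)-\|\phg^-\|$), whereas you test the variational principle against an equilibrium state for $\phb$; these are interchangeable standard arguments.
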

\begin{proof}
Write $\phg^- = \min(0,\phg)$.  Using the fact that the pressure function is monotonic ~\cite[Theorem 9.7]{Wa} 
and the variational principle, we see that
\begin{multline*}
P(X,\ph) = P(X,\phb + \phg) \geq P(X,\phb + \phg^-) \geq P(X,\phb) - \|\phg^-\| \\
= P(X,\phb) + \inf\phg > P(Y,\phb) = P(Y,\ph),
\end{multline*}
where the last inequality uses~\eqref{eqn:BR}.  Thus Theorem~\ref{thm:nonWalters} applies.
\end{proof}
\begin{remark}
If $\phb=0$ and $Y$ is a single periodic orbit, then \eqref{eqn:BR} follows from the familiar condition that $\sup\ph - \inf\ph < \htop(X)$.
\end{remark}
\subsection{Non-Bowen potentials for $\beta$-shifts} \label{nb:beta} We extend Theorem \ref{thm:nonWalters} to the setting when $X$ is a $\beta$-shift and $Y= \{0\}$. 
This will complete the proof of Theorem~\ref{thm:nonHolder}.
\begin{theorem}\label{lem:pospressenough}
Let $\ph\in \AAA(\Sigma_\beta,\{0\})$ be such that the numbers $a_w$ in Definition~\ref{def:gridfn} depend only on the length of $w$.  If $P(\Sigma_\beta, \ph)> \ph(0)$, then $\ph$ has a unique equilibrium state.  Furthermore, this measure has the weak Gibbs property~\eqref{eqn:wkgibbs} and is the weak*-limit of the periodic orbit measures~\eqref{eqn:permeas}.
\end{theorem}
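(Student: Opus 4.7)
The plan is to apply Theorem~\ref{thm:main} using a decomposition that combines the graph-based decomposition of the $\beta$-shift from \S\ref{sec:betashifts} with the non-Bowen decomposition from \S\ref{sect:nbb}. Fix a large integer $\ell$, take $\CCC^p = \emptyset$, and set
\[
\GGG = \{w \in \GGG_\beta : \text{the last $\ell$ symbols of $w$ contain a nonzero}\},
\]
where $\GGG_\beta$ is the collection of labels of $v_1 \to v_1$ paths in the countable-state graph presentation of $\Sigma_\beta$ from \S\ref{sec:betashifts}. Define $\CCC^s$ to consist of labels of graph paths starting at $v_1$ that decompose as a (possibly empty) concatenation of zero-ending excursions followed by an initial $\wb$-segment $\wb_1 \cdots \wb_{m-1}$. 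Starting from the standard factorization $w = u \wb_1 \cdots \wb_{m-1}$ with $u \in \GGG_\beta$ and iteratively stripping the last excursion of $u$ whenever $u \notin \GGG$ (which preserves $\GGG_\beta$ because each stripped excursion returns to $v_1$), one arrives at some $g \in \GGG$ (possibly empty), verifying $\GGG \cdot \CCC^s = \LLL(\Sigma_\beta)$.

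Condition~\eqref{cond:spec} follows exactly as in \S\ref{sec:betashifts}: words in $\GGG(M)$ label paths from $v_1$ ending at some $v_j$ with $j \leq M+1$, and can be joined by a fixed connecting path back to $v_1$ of length exactly $\tau_{M+1}$, padded with zero self-loops if necessary. Condition~\eqref{cond:bowen} follows by the argument of Lemma~\ref{lem:WaltersonG}: the hypothesis that $a_w$ depends only on $|w|$, combined with the nonzero in the last $\ell$ symbols of $w \in \GGG$, forces $\phg(\sigma^j x) = \phg(\sigma^j y)$ for all $x, y \in [w]$ and $0 \leq j \leq |w| - \ell$, giving $|S_{|w|}\phg(x) - S_{|w|}\phg(y)| \leq 2\ell \|\phg\|$, and adding the Bowen bound for $\phb$ yields $\ph \in \WWW(\GGG)$.

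The main obstacle is Condition~\eqref{cond:PR}, for which I would prove the stronger statement $P(\CCC^s, \ph) < P(\ph)$. Every element of $\CCC^s$ factorizes as $e_1 \cdots e_r \wb_1 \cdots \wb_{m-1}$ with each zero-ending excursion $e_i$ uniquely determined by its length $j_i \in \{j \geq 1 : \wb_j \geq 1\}$. I would bound $\ph_N(w)$ block-by-block using the Bowen property of $\phb$ together with the grid structure of $\phg$, then sum over compositions $N = j_1 + \cdots + j_r + (m-1)$ via a generating-function (or convolution) estimate. The two essential inputs are: (a) the hypothesis $\ph(0) < P(\ph)$ combined with $\sum_{i=1}^k a_i = o(k)$ coming from $a_n \to 0$, which controls excursions consisting of self-loops at the fixed point $0$; and (b) an analog of Proposition~\ref{prop:Holder} giving $\ulim \frac{1}{n}\ph_n(\wb_1 \cdots \wb_n) < P(\ph)$, which follows from Proposition~\ref{prop:Holder} applied to $\phb$ together with sublinear control on the $\phg$-contribution along $\wb$-orbits (again using $a_n \to 0$). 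Taking $\ell$ sufficiently large then makes the generating series finite with ratio strictly less than $1$ at $z = e^{-P(\ph)}$, which delivers the pressure gap and hence Condition~\eqref{cond:PR}.

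With all three conditions verified, Theorem~\ref{thm:main} under \Fp-specification produces the unique equilibrium state $\mu_\ph$ with the weak Gibbs property~\eqref{eqn:wkgibbs} and the convergence of periodic orbit measures~\eqref{eqn:permeas}. The hardest step is the pressure bound on $\CCC^s$: one must simultaneously control how the non-Bowen grid function $\phg$ interacts with both the fixed point $0$ and the $\wb$-backbone, and then handle the combinatorial explosion of possible excursion compositions without destroying the pressure gap supplied by the hypothesis $\ph(0) < P(\ph)$.
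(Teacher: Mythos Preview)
Your verification of Conditions~\eqref{cond:spec} and~\eqref{cond:bowen} is fine and close in spirit to the paper's. The real gap is in Condition~\eqref{cond:PR}. As you defined it, $\CCC^s$ does not depend on $\ell$ at all, and since $\wb_1\geq 1$ the single symbol $0$ is itself a zero-ending first-return excursion; hence the number of admissible compositions $j_1+\cdots+j_r+(m-1)=n$ grows exponentially in $n$. Your convolution estimate therefore needs something like $e^{V}F(e^{-P(\ph)})<1$, where $F(z)=\sum_{j:\wb_j\geq 1}c_j z^j$, $c_j\approx e^{\ph_j(e_j)}$, and the factor $e^{V}$ comes from the Bowen error incurred at each concatenation of $\phb$-blocks. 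Even granting your input~(b), you only get $c_j e^{-jP(\ph)}\leq C e^{-j\delta'}$ for some $C,\delta'>0$ you do not control, so the series can easily exceed $1$. Your claim that ``taking $\ell$ sufficiently large makes the ratio $<1$'' is unjustified: nothing in your $\CCC^s$ sees $\ell$, and the stripping procedure does not force stripped excursions to be long (the self-loop $0$ is stripped whenever the trailing zero-run is long). There is also a secondary gap in input~(b): Proposition~\ref{prop:Holder} applied to $\phb$ yields $\ulim\frac1n S_n\phb(\wb)<P(\phb)$, and adding the sublinear $\phg$-correction gives a bound against $P(\phb)$, not against $P(\ph)$; when $\phg\leq 0$ one has $P(\ph)\leq P(\phb)$ and the inequality you need does not follow.

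The paper sidesteps the combinatorial explosion by choosing a much leaner decomposition. It takes $\GGG$ to be labels of paths from $v_1$ that either end at $v_1$ with nonzero last symbol \emph{or} end at $v_2$, and takes $\CCC^s$ to consist only of words of the three shapes $\wb_1\cdots\wb_m\,a\,0^k$, $\wb_2\cdots\wb_m\,a\,0^k$, and $0^k$, so that $\#\CCC^s_n=O(n)$. With subexponential cardinality in hand, it suffices to prove the uniform pointwise bound $\frac1n S_n\ph(x)\leq P(\ph)-\delta'$, which the paper obtains by a combinatorial flipping argument extending the proof of Proposition~\ref{prop:Holder}: one replaces selected nonzero symbols of $w$ by $0$, controls the change in $S_n\phb$ via the Bowen property and the change in $S_n\phg$ via the monotone structure of the partial sums $s_\ell=\sum_{j\leq\ell}|a_j|$, and then counts the resulting words to compare $e^{S_n\ph(x)}$ against $\Lambda_n(\LLL,\ph)$. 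No generating-function step is needed.
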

For the proof of Theorem~\ref{lem:pospressenough}, we define collections $\GGG$ and $\CCC^s$ in order to deal with both the non-Markov structure of $\Sigma_\beta$ and the failure of the Bowen property for $\ph$ simultaneously so that we can apply Theorem \ref{thm:main}.  Recall the presentation of $\Sigma_\beta$ via a graph on a countable vertex set. Let $\GGG$ be the collection of words $w$ which label a path that either:  (1) begins and ends at the base vertex $v_1$ and has the additional property that $w_{|w|} \neq 0$; or (2) begins at $v_1$ and ends at $v_2$.

For the suffix set, take $\CCC^s = \CCC^{s,1}  \cup \CCC^{s,2} \cup \CCC^{s,3}$, where
\begin{gather*}
\CCC^{s,1} = \{ w^\beta_1 w^\beta_2 \cdots w^\beta_m \mid m\geq 1 \} \cdot A \cdot \{0^\ell \mid \ell \geq 0 \},\\
\CCC^{s,2} = \{ w^\beta_2 w^\beta_3 \cdots w^\beta_m \mid m\geq 2 \} \cdot A \cdot \{0^\ell \mid \ell \geq 0 \},\\
\CCC^{s,3} = \{ 0^\ell \mid \ell \geq 0 \}.
\end{gather*}
It is clear that every word in $\LLL$ can be written as a word in $\GGG$ followed by a word in $\CCC^s$. It follows from our earlier analysis of the $\beta$-shift that $\GGG(M)$ has \Fp-specification for every $M$. To prove Theorem \ref{lem:pospressenough} we need only verify that  $\CCC^s$ satisfies Condition~\eqref{cond:PR} of Theorem \ref{thm:main}. This is the content of  \S \ref{posspresproof}.
\subsection{Factors of $\beta$-shifts}
Let $X$ be a subshift factor of a $\beta$-shift. In \cite{CT}, we proved that there is a natural way to write $\LLL(X) = \GGG \CCC^s$, which is inherited from the $\beta$-shift. An easy variation of the argument in \cite{CT} shows that $\GGG (M)$ has \Fp-specification and $P(\CCC^s , 0) = 0$. The following theorem is a corollary of this fact and our main theorem.
\begin{theorem}
Let $X$ be a subshift factor of a $\beta$-shift, and let $\LLL (X) = \GGG \CCC^s$ be the decomposition inherited from the $\beta$-shift. Suppose $\ph \in \WWW(\GGG)$ and $\sup \ph - \inf \ph < \htop (X)$. Then $\ph$ has a unique equilibrium state.
\end{theorem}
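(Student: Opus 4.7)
The plan is to deduce this theorem directly from Theorem~\ref{thm:main} applied to the decomposition $\LLL(X) = \GGG\CCC^s$ (with $\CCC^p = \emptyset$) inherited from the $\beta$-shift factor structure. Condition~\eqref{cond:spec} holds because the variation of the argument from~\cite{CT} referenced in the paragraph before the statement gives \Fp-specification for $\GGG(M)$ for every $M$. Condition~\eqref{cond:bowen} is immediate from the hypothesis $\ph \in \WWW(\GGG)$. So the only nontrivial task is Condition~\eqref{cond:PR}, and by the remark after Theorem~\ref{thm:main} it suffices to establish the strict pressure gap
\[
P(\CCC^s, \ph) < P(X, \ph).
\]

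To see the gap, I would use two elementary comparisons. From $\ph_n(w) \leq n \sup \ph$ for every $w$, one gets
\[
\Lambda_n(\CCC^s, \ph) \leq |\CCC^s_n| \cdot e^{n \sup \ph},
\]
and hence $P(\CCC^s, \ph) \leq P(\CCC^s, 0) + \sup \ph = \sup \ph$, using the fact (inherited from the factor construction in~\cite{CT}) that $P(\CCC^s, 0) = 0$. On the other hand, $\ph_n(w) \geq n \inf \ph$ for every $w \in \LLL(X)_n$ gives
\[
\Lambda_n(\LLL(X), \ph) \geq |\LLL(X)_n| \cdot e^{n \inf \ph},
\]
and therefore $P(X, \ph) \geq \htop(X) + \inf \ph$. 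Subtracting,
\[
P(X, \ph) - P(\CCC^s, \ph) \geq \htop(X) - (\sup \ph - \inf \ph) > 0,
\]
where the last inequality is exactly the standing hypothesis. Thus Condition~\eqref{cond:PR} holds, all three hypotheses of Theorem~\ref{thm:main} are verified, and uniqueness of the equilibrium state follows.

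There is no real obstacle here: the theorem is a clean corollary of Theorem~\ref{thm:main} once the inherited structural facts ($\GGG(M)$ has \Fp-specification and $P(\CCC^s, 0) = 0$) are in place, and the pressure-gap condition $\sup\ph - \inf\ph < \htop(X)$ is precisely tuned so that the crude upper bound on $P(\CCC^s, \ph)$ by $\sup \ph$ and the crude lower bound on $P(X, \ph)$ by $\htop(X) + \inf \ph$ combine to give the required strict inequality. The only mild subtlety is verifying that the inherited decomposition for the factor really does satisfy $P(\CCC^s, 0) = 0$, but this is stated in the paragraph preceding the theorem and follows from the corresponding fact for $\beta$-shifts (where $\CCC^s_n$ is a singleton) together with the standard observation that a factor map can only decrease the growth rate of any suffix collection.
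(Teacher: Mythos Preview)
Your proof is correct and matches the paper's approach essentially line for line: both reduce to verifying $P(\CCC^s,\ph) < P(X,\ph)$ via the bounds $P(\CCC^s,\ph)\leq \sup\ph$ (from subexponential growth of $\CCC^s_n$) and $P(X,\ph)\geq \htop(X)+\inf\ph$. The only cosmetic difference is that the paper cites the variational principle for the lower bound on $P(X,\ph)$ whereas you derive it by direct counting.
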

\begin{proof*}
We just need to check that $P(\CCC^s, \ph) < P(X, \ph)$.  Since $\#\CCC^s_n$ grows subexponentially, we have $P(\CCC^s, \ph) \leq \sup \ph$. By the variational principle,
\begin{align*}
P(X, \ph) &\geq \htop(X)+ \inf \ph \\  
&\geq \htop(X)-(\sup \ph - \inf \ph) + P(\CCC^s, \ph) \\ &> P(\CCC^s, \ph).\qedhere
\end{align*}
\end{proof*}
\section{Interval maps} \label{chap:non-symbolic}

We prove uniqueness of equilibrium states  for systems that can be well coded by shift spaces meeting the hypotheses of our main theorem. 

\subsection{Piecewise monotonic interval maps}
We consider maps on the interval (perhaps discontinuous) which admit a finite partition such that the map is continuous and monotonic when restricted to the interior of any partition element. That is, let $I=[0,1]$ be the unit interval, and let $f\colon I \to I$ be such that there exists  $p \geq 2$ and $0=a_0 <a_1 < \ldots < a_p=1$ such that writing $I_j = (a_j, a_{j+1})$, the restriction $f|_{I_j}$ is a continuous, monotonic map for every $j$.

Let $S=\{a_0, \ldots, a_p\}$ and $I'= I \setminus \bigcup_{i\geq0}f^{-i}S$. We code $(I,f)$ by the alphabet $A=\{0, \ldots, p-1\}$; the \emph{symbolic dynamics} of $(I, f)$ is defined by the natural coding map $i\colon I' \to A^\NN$, which is given by $i(x)_k = j$ if $f^k(x) \in I_j$. We make the assumption that $i$ is well defined and injective on $I'$.  This is clearly true when $f$ is $C^1$ and satisfies $|f'(x)| \geq \alpha >1$ for all $x$.  When $f$ is assumed only to be increasing on each $I_j$, a sufficient condition for $i$ to be well defined and injective is that $f$ is transitive \cite{FPf,Pa2}.
\begin{definition}
We say that a piecewise monotonic map admits symbolic dynamics if the natural coding map $i\colon I' \to A^\NN$ is injective. The symbolic dynamics of $(I, f)$ is the symbolic space $\Sigma_f\subset A^\NN$ given by
$
\Sigma_f = \overline{i(I')}.
$ 
\end{definition}
We define $\pi\colon \Sigma_f \to I$ by 
$
\pi(x) = \bigcap_{k=0}^\infty f^{-k} \overline{ I_{x_k}},
$
and recall the following important facts.
\begin{enumerate}
\item $(I,f)$ is a topological factor of $(\Sigma_f, \sigma)$, with $\pi$ as a factor map.
\item $\pi$ is injective away from a countable set.
\item Since any measure whose support is contained in a countable set is periodic, $\pi$ can be used to give a measure theoretic isomorphism between $(\Sigma_f, \sigma)$ and $(I,f)$ for any measure which has no atoms.
\item Hence, to prove that $\ph$ has a unique equilibrium state on $(I, f)$, it suffices to show that $\ph \circ \pi$ has a unique equilibrium state which has no atoms. 
\end{enumerate}
There is a natural identification between words $w$ in $\Sigma_f$ and `cylinder sets' $\pi(w) \subset I$, where  $\pi(w)=  \bigcap_{k=0}^{|w|-1} f^{-k} \overline{ I_{w_k}}$. 
Consider the class of functions $\hat C(I) = \{ \ph\colon I\to \RR \mid \ph\circ\pi\in C(\Sigma_f)\}$.  For a function $\ph \in \hat C(I)$, we define a Bowen property which is adapted to the symbolic dynamics. For any $\GGG \subset \LLL(\Sigma_f)$, define
\[
V_n(I, \GGG, S_n\ph) = \sup \{ |S_n \ph(x) - S_n \ph(y)| \mid x,y\in \pi(w), w\in \GGG_n \}.
\]
We define $\WWW_I(\GGG) = \{\ph \in \hat C(I) \mid \sup_n V_n(I, \GGG, S_n\ph) < \infty \}$. For $w \in \LLL(\Sigma_f)$, we define
\[
\ph_n(\pi(w)) = \sup_{x\in \pi(w)} S_n \ph(x).
\]
Given a collection of words $\DDD \subset \LLL(\Sigma_f)$, we write $\DDD_n = \DDD \cap \LLL_n$, and we consider the quantities
\[
\Lambda_n(I, \DDD,\ph) = \sum_{w\in \DDD_n} e^{\ph_n(\pi(w))}.
\]
We define 
\[
P(\ph) = \lim_{n \rightarrow \infty} \frac{1}{n} \log \Lambda_n(I, \LLL(\Sigma_f), \ph).
\]

\begin{theorem} \label{thm:pmm}
Let $f$ be a piecewise monotonic interval map which admits symbolic dynamics, and  $\ph\in \hat C(I)$ be  a potential. Suppose there exist collections of words  $\CCC^p,  \GGG, \CCC^s \subset \LLL(\Sigma_f)$ such that $\CCC^p \GGG \CCC^s = \LLL(\Sigma_f)$ and the following conditions hold:
\begin{enumerate}[(I)]
\item \label{cond:specI}
$\GGG(M)$ has \F-specification for every $M$;
\item \label{cond:bowenI}
$\ph \in \WWW_I(\GGG)$;
\item \label{cond:PRI}
$\sum_{n\geq 1} \Lambda_n (I, \CCC^p \cup \CCC^s, \ph) e^{-nP(\ph )} < \infty$.
\end{enumerate}
Then $\ph$ has a unique equilibrium state $\mu_\ph$.  Furthermore, $\mu_\ph$ is fully supported and satisfies the weak Gibbs property~\eqref{eqn:wkgibbs}.  If each $\GGG(M)$ has \Fp-specification, $\mu_\ph$ is the weak* limit of the periodic orbit measures
\begin{equation}\label{eqn:permeas2}
\frac 1{\sum_{x\in \Per_n} e^{S_n\ph(x)}} \sum_{x\in \Per_n} e^{S_n\ph(x)}\delta_x.
\end{equation}
\end{theorem}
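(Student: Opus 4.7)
The natural plan is to lift the problem from the interval to its symbolic dynamics and invoke Theorem~\ref{thm:main}. Set $\tilde\ph = \ph\circ\pi \in C(\Sigma_f)$; by hypothesis $\ph\in \hat C(I)$ so this is continuous. I would first check that the three hypotheses for $\ph$ on $I$ translate into the three hypotheses of Theorem~\ref{thm:main} for $\tilde\ph$ on $\Sigma_f$ with the same decomposition $\CCC^p\GGG\CCC^s=\LLL(\Sigma_f)$. Condition~\eqref{cond:specI} is a statement purely about $\GGG(M)\subset \LLL(\Sigma_f)$, so it is identical to~\eqref{cond:spec}. For~\eqref{cond:bowenI}$\Rightarrow$\eqref{cond:bowen} and~\eqref{cond:PRI}$\Rightarrow$\eqref{cond:PR}, the key observation is that for any $w\in\LLL(\Sigma_f)$, continuity of $\tilde\ph$ together with the fact that $\pi([w])$ is dense in $\pi(w)$ gives
\[
\tilde\ph_n(w) \;=\; \sup_{y\in [w]}S_n\tilde\ph(y) \;=\; \sup_{x\in \pi(w)}S_n\ph(x) \;=\; \ph_n(\pi(w)),
\]
so $V_n(\GGG,S_n\tilde\ph)=V_n(I,\GGG,S_n\ph)$, $\Lambda_n(\DDD,\tilde\ph)=\Lambda_n(I,\DDD,\ph)$, and in particular $P(\Sigma_f,\tilde\ph)=P(\ph)$.

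Once these conditions are verified, Theorem~\ref{thm:main} produces a unique equilibrium state $\tilde\mu$ for $\tilde\ph$ on $\Sigma_f$, satisfying the weak Gibbs property~\eqref{eqn:wkgibbs}, and under \Fp-specification it is the weak$^*$ limit of the symbolic periodic orbit measures. I would then define $\mu_\ph=\pi_\ast\tilde\mu$. By Remark~\ref{noatoms}, $\tilde\mu$ has no atoms (the weak Gibbs bound forces full support, and ergodicity excludes atoms on the non-trivial shift $\Sigma_f$), and since $\pi$ is injective off a countable set, $\pi$ is a measure-theoretic isomorphism between $(\Sigma_f,\sigma,\tilde\mu)$ and $(I,f,\mu_\ph)$. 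Hence $h_{\mu_\ph}(f)+\int\ph\,d\mu_\ph = h_{\tilde\mu}(\sigma)+\int\tilde\ph\,d\tilde\mu = P(\tilde\ph)=P(\ph)$, so $\mu_\ph$ is an equilibrium state, and the weak Gibbs bound transfers via $\mu_\ph(\pi(w))\geq \tilde\mu([w])$.

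For uniqueness, suppose $\nu$ is any equilibrium state for $\ph$ on $I$. The concentration-on-countable-set argument in the excerpt shows that any equilibrium state either has no atoms or is supported on a periodic orbit; the latter possibility is ruled out because a single periodic orbit carries zero entropy and its contribution to pressure is dominated by that of $\mu_\ph$ (which has $h_{\mu_\ph}(f)>0$ as soon as $\Sigma_f$ is non-trivial — and if $h_{\mu_\ph}(f)=0$ one handles the degenerate case separately). An atomless equilibrium state lifts through the isomorphism $\pi$ to an equilibrium state for $\tilde\ph$ on $\Sigma_f$, which must coincide with $\tilde\mu$ by the uniqueness from Theorem~\ref{thm:main}; pushing forward again gives $\nu=\mu_\ph$. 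Finally, the weak$^*$ convergence of periodic orbit measures transfers directly: $\pi$ is continuous, maps symbolic periodic orbits of period $n$ onto periodic orbits of $f$ of period dividing $n$, and the weights $e^{S_n\tilde\ph}$ agree with $e^{S_n\ph}$ under $\pi$, so pushing forward the symbolic limit gives~\eqref{eqn:permeas2}.

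The main obstacle I anticipate is the uniqueness step, specifically the transfer of equilibrium states back through $\pi$: one must be careful that an equilibrium state $\nu$ on $I$ cannot have enough mass on the countable exceptional set $I\setminus I'$ to obstruct the lift. This amounts to showing that no equilibrium state for $\ph$ can give positive mass to the countable $f$-invariant set where $\pi$ fails to be injective, which as sketched above follows by comparing the pressure carried by an atomic measure with the value $P(\ph)$ realised by $\mu_\ph$. The translation of the Bowen and pressure hypotheses through the factor map is conceptually routine but does require the small argument that $\pi([w])$ is dense in $\pi(w)$ and that continuity of $\tilde\ph$ preserves the relevant suprema.
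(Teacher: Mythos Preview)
Your approach is exactly the paper's: lift to $\Sigma_f$, apply Theorem~\ref{thm:main} to $\tilde\ph=\ph\circ\pi$, invoke Remark~\ref{noatoms} for non-atomicity, and push back down via $\pi$. The paper's proof is essentially two sentences that point to the enumerated facts (1)--(4) preceding the theorem; you are supplying those details, and your verification that the interval-side quantities $V_n(I,\GGG,S_n\ph)$, $\Lambda_n(I,\DDD,\ph)$, $P(\ph)$ coincide with their symbolic counterparts is the right content.

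One step in your uniqueness argument does not work as written. To rule out an atomic (periodic-orbit) equilibrium state on $I$, you claim its ``contribution to pressure is dominated by that of $\mu_\ph$'' because $h_{\mu_\ph}(f)>0$. But $h_{\mu_\ph}>0$ only gives $P(\ph)>\int\ph\,d\mu_\ph$; it says nothing about $\tfrac 1p S_p\ph(x)$ for a given periodic $x$, which could in principle equal $P(\ph)$. The fix is simpler than your case split: since $\pi$ is a factor map, \emph{every} $f$-invariant measure $\nu$ on $I$ (atomic or not) lifts to some $\sigma$-invariant $\tilde\nu$ on $\Sigma_f$ with $h_{\tilde\nu}(\sigma)\geq h_\nu(f)$ and $\int\tilde\ph\,d\tilde\nu=\int\ph\,d\nu$. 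Since $\pi$ is injective off a countable set, entropy is actually preserved and $P(\tilde\ph)=P(\ph)$, so the lift of any equilibrium state on $I$ is an equilibrium state on $\Sigma_f$; uniqueness there forces $\tilde\nu=\tilde\mu$, whence $\nu=\pi_*\tilde\mu=\mu_\ph$. This is what the paper's item (4) encodes, and it removes the need to argue separately about atoms.
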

\begin{proof}
The hypotheses of the theorem show that the function $\ph \circ \pi$ on $(\Sigma_f, \sigma)$ satisfies the hypotheses on Theorem \ref{thm:main}. Thus, $\ph \circ \pi$ has a unique equilibrium state on $\Sigma_f$.  We see from Remark~\ref{noatoms} that this equilibrium state has no atoms, and therefore, the discussion above yields a unique equilibrium state for $\ph$ on $(I, f)$.
\end{proof}
We show how to verify these hypotheses for natural classes of functions on some special classes of piecewise monotonic interval maps.
\subsection{$\beta$-transformations and $F$-transformation} \label{betaandF} 
Let $F\colon I \to \RR$ be an increasing $C^1$ map such that $F(0)=0$, and suppose that there exists $\alpha>1$ such that $F'(x) \geq \alpha >1$ for all $x\in I$. Let $f\colon [0,1) \to [0,1)$ be the map given by
\[
f(x) = F(x) \pmod 1.
\]
Following \cite{Wa2,fS95}, we call such a map an $F$-transformation (in~\cite{fH87}, these are called \emph{monotonic mod 1} transformations). When $F(x) = \beta x$ for some $\beta>1$, we recover the definition of the $\beta$-transformation. 

As described in \cite{FPf}, it can be shown that writing $b = \lceil F(1) \rceil$, the space $\Sigma_f \subset \Sigma_b^+$ has a lexicographically maximal element $w$, and $\Sigma_f$ can be characterised as
\[
\Sigma_f = \{ x \in \Sigma_b^+ \mid 0 \preceq \sigma^nx \preceq w \text{ for all } n \geq 0 \}.
\]
In other words, the symbolic dynamics of $(I,f)$ is a $\beta$-shift. Since $f$ is uniformly expanding, it is easy to check that if $\ph \in \hat C(I)$ is H\"older, then $\ph \in \WWW_I( I)$ and thus $\ph \circ \pi \in \WWW(\Sigma_f)$. Proposition \ref{prop:Holder} shows that Condition \eqref{cond:PR} of Theorem \ref{thm:main} holds for $\ph \circ \pi$, and thus $\ph \circ \pi$ has a unique equilibrium state (which has no atoms). Thus, by the comments above, $\ph$ has a unique equilibrium state. In summary, we obtain  
\begin{theorem}
Let $f:[0,1) \mapsto [0,1)$ be an $F$-transformation and $\ph \in \hat C(I)$ have the Bowen property. Then $\ph$ has a unique equilibrium state $\mu_\ph$.  Furthermore, $\mu_\ph$ is fully supported and satisfies the weak Gibbs property~\eqref{eqn:wkgibbs}; it is also the weak* limit of the periodic orbit measures~\eqref{eqn:permeas2}.
\end{theorem}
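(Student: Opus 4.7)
The plan is to reduce the theorem to Theorem~\ref{thm:pmm} by importing the decomposition of the $\beta$-shift language developed in~\S\ref{sec:betashifts}. The first step invokes the characterization cited from~\cite{FPf}: since $F$ is an increasing, uniformly expanding $C^1$ map with $F(0)=0$, the symbolic dynamics $\Sigma_f$ coincides with the $\beta$-shift $\Sigma_\beta$ corresponding to the greedy expansion of $1$ under $F$. I therefore set $\CCC^p=\emptyset$, let $\GGG$ be the collection of words labeling loops through the base vertex $v_1$ in the countable graph presentation, and let $\CCC^s=\{\wb_1 \cdots \wb_n \mid n\geq 1\}$. Then $\GGG\CCC^s=\LLL(\Sigma_f)$, and it remains to verify the three hypotheses of Theorem~\ref{thm:pmm} for $\ph$ with this decomposition.

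For Condition~\eqref{cond:specI} the work has already been done in~\S\ref{sec:betashifts}: each $\GGG(M)$ has \Fp-specification with gap size $\tau_M$. Condition~\eqref{cond:bowenI} is immediate from the hypothesis, since $\WWW_I(\LLL(\Sigma_f)) \subset \WWW_I(\GGG)$. For Condition~\eqref{cond:PRI}, the key observation is that $\pi([w])$ and $\pi(w)$ differ only by a countable set of $f$-preimages of partition endpoints, so by continuity of $S_n\ph$ the suprema coincide and $\Lambda_n(I, \CCC^s, \ph) = \Lambda_n(\CCC^s, \ph \circ \pi)$; similarly the interval and symbolic pressures agree. The hypothesis $\ph \in \WWW_I(\LLL(\Sigma_f))$ transfers to $\ph \circ \pi \in \WWW(\LLL(\Sigma_\beta))$ because for $x,y\in[w]$ we have $\pi(x),\pi(y)\in\pi(w)$ and hence $|S_n(\ph\circ\pi)(x)-S_n(\ph\circ\pi)(y)|\leq V_n(I,\LLL,S_n\ph)$. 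Proposition~\ref{prop:Holder} then applies and yields~\eqref{eqn:betaPR}, which is precisely the interval summability needed via the identification above.

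With the three hypotheses verified, Theorem~\ref{thm:pmm} delivers the unique equilibrium state $\mu_\ph$, together with full support, the weak Gibbs property~\eqref{eqn:wkgibbs}, and (because each $\GGG(M)$ has \Fp-specification, not merely \F-specification) convergence of the periodic orbit measures~\eqref{eqn:permeas2}. The main content is genuinely the decomposition and Proposition~\ref{prop:Holder} already set up for $\beta$-shifts; the only step that requires care is the transfer between the interval and symbolic formulations of Bowen regularity, pressure, and summability. I expect this to be routine because $\pi$ is a factor map which identifies the two models up to a countable set, so none of the relevant quantities (sups over cylinders, partition-function growth rates, Bowen constants) are affected. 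Thus I do not foresee any obstacle beyond bookkeeping.
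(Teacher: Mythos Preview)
Your proposal is correct and follows essentially the same route as the paper: identify $\Sigma_f$ with a $\beta$-shift, use the decomposition $\GGG\CCC^s$ from \S\ref{sec:betashifts}, transfer the Bowen hypothesis to $\ph\circ\pi$, and invoke Proposition~\ref{prop:Holder} to verify the summability condition. The only cosmetic difference is that you package the conclusion via Theorem~\ref{thm:pmm} whereas the paper applies Theorem~\ref{thm:main} directly to $\ph\circ\pi$ on $\Sigma_f$ and then pushes the equilibrium state down; since Theorem~\ref{thm:pmm} is proved by exactly that reduction, the two arguments coincide.
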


\subsection{A generalisation of the Manneville-Pomeau maps} \label{MPbeta}
We explain how Theorem \ref{thm:pmm} can be applied to the maps in Example \ref{eg:betaMP}. Recall, we fix $\gamma>0$ and $\eps \in(0,1)$ and define 
\begin{equation} \label{MPmap}
f(x) = x + \gamma x^{1+\eps} \pmod 1.
\end{equation}  
When $\gamma=1$, $f$ is the well-known Manneville--Pomeau map. More generally, $f$ is an $F$-transformation, albeit with only non-uniform expansion. The characterisation of the symbolic dynamics for $F$-transformations holds true in this case, and we see that there exists $\beta >1$ such that the symbolic dynamics of $(I,f)$ is $\Sigma_\beta$. 

Consider the geometric potential $\ph(x) = -\log |f'(x)|$.  
When $t<1$, we show that $t\ph \circ \pi$ is a function on $\Sigma_\beta$ of the type studied in  \S \ref{sect:nbb}. 
\begin{theorem}\label{thm:MP}
Fix $\gamma>0$ and $0<\eps<1$, and let $f$ be the piecewise monotonic interval map defined in~\eqref{MPmap}.  Let $\ph(x) = -\log|f'(x)|$ be the geometric potential.  Then the following are true.
\begin{enumerate}
\item For each $t<1$, the potential $t \ph$ has  a unique equilibrium state $\mu_t$.  The measures $\mu_t$  have the weak Gibbs property~\eqref{eqn:wkgibbs} and are the weak* limit of the periodic orbit measures
\[
\frac{1}{\sum_{x\in\Per_n} ((f^n)'(x))^{-t}} \sum_{x\in\Per_n} ((f^n)'(x))^{-t} \delta_x.
\]
On $(-\infty, 1)$, the pressure function $t\mapsto P(t\ph)$ is $C^1$ and strictly positive.
\item At $t=1$, the function $t\mapsto P(t\ph)$ is not differentiable, and there are at least two distinct ergodic equilibrium states for $\ph$.
\item For $t\geq 1$, we have $P(t\ph) = 0$, and the $\delta$-measure at $0$ is an equilibrium state.
\end{enumerate}
\end{theorem}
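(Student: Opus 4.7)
The plan for part (1) is to apply Theorem~\ref{lem:pospressenough} to the symbolic model. Since $f$ is an $F$-transformation, the discussion in \S\ref{betaandF} identifies $\Sigma_f$ with a $\beta$-shift $\Sigma_\beta$ via the coding map $\pi$; the task is then to show that for each $t<1$ the lifted potential $t\ph\circ\pi$ lies in $\mathcal{A}(\Sigma_\beta,\{0\})$ with grid-function coefficients $a_n$ depending only on length and tending to zero, and that $P(\Sigma_\beta, t\ph\circ\pi) > t\ph(0)$. Since $f'(0)=1$ we have $t\ph(0)=0$, so the pressure-gap hypothesis reduces to strict positivity of $P(t\ph)$.

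For the decomposition $t\ph\circ\pi = \phb + \phg$, I would let $y_m := \pi(0^m 1 0^\infty)$ denote the backward orbit of $\pi(10^\infty)$ under the inverse branch of $f$ fixing $0$, and set $a_n := t\ph(y_{n-1})$ for $n\geq 1$. Since $y_m\to 0$ and $\ph$ is continuous with $\ph(0)=0$, we have $a_n\to 0$. Defining $\phg$ to take the constant value $a_n$ on each cylinder $[0^{n-1}z]$ with $z\neq 0$, and setting $\phb = t\ph\circ\pi - \phg$, the remaining analytical work is to show $\phb\in\WWW(\LLL(\Sigma_\beta))$. Strict positivity of $P(t\ph)$ on $(-\infty,1)$ follows from convexity of the pressure together with $P(0)=\htop(\Sigma_\beta)=\log\beta>0$ and the bound $P(\ph)\leq 0$ established in part~(3) below, which give $P(t\ph)\geq(1-t)\log\beta>0$. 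Theorem~\ref{lem:pospressenough} then yields the unique equilibrium state $\mu_t$ with the weak Gibbs property and the periodic orbit description; the $C^1$ regularity and strict positivity on $(-\infty,1)$ are then automatic, using uniqueness of $\mu_t$ and the fact that a convex function differentiable at every point of an open interval is $C^1$ there.

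Parts (3) and (2) are handled by purely thermodynamic arguments. For part~(3), $\delta_0$ has $h(\delta_0)=0$ and $\int\ph\,d\delta_0=0$, so $P(t\ph)\geq 0$ for all $t$; conversely, the Margulis--Ruelle inequality $h(\mu)\leq -\int\ph\,d\mu$ combined with $\int\ph\,d\mu\leq 0$ (from $|f'|\geq 1$) gives $h(\mu)+t\int\ph\,d\mu\leq 0$ for every invariant $\mu$ and every $t\geq 1$. For part~(2), combining parts~(1) and~(3) shows that the convex function $t\mapsto P(t\ph)$ has left derivative at most $-\log\beta<0$ and right derivative $0$ at $t=1$, so $P$ is not differentiable there. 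Uniqueness of the equilibrium state at $t=1$ would force a single tangent slope $\int\ph\,d\mu$, so non-differentiability produces at least two distinct equilibrium states; their ergodic decompositions then yield at least two distinct ergodic ones.

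The main obstacle will be the Bowen property for $\phb$ on cylinders of the form $[0^n]$, where $\phg$ is not locally constant and both $S_n(t\ph\circ\pi)$ and $S_n\phg$ diverge along orbits approaching $0$, so the correction must be calibrated to cancel this divergence. With the choice $a_n=t\ph(y_{n-1})$, the individual terms $t\ph(\pi\sigma^j x) - a_{k(\sigma^j x)+1}$ reduce to oscillations of $t\ph$ over cylinders $\pi([0^m z])$ whose diameters decay polynomially (of order $m^{-(1+\eps)/\eps}$) thanks to the controlled non-uniform expansion of $f(x)=x+\gamma x^{1+\eps}$ near $0$; combined with the $\eps$-Hölder regularity of $\ph=-\log|f'|$, these oscillations are summable in $m$, yielding the required uniform bound on the variation of Birkhoff sums of $\phb$.
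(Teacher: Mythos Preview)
Your overall strategy---pass to the $\beta$-shift model, decompose $t\ph\circ\pi$ as a Bowen part plus a grid function, and invoke Theorem~\ref{lem:pospressenough}---matches the paper's. But the step where you establish $P(t\ph)>0$ for $t\in(0,1)$ has a genuine error, and this propagates to part~(2).

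You write that convexity together with $P(0)=\log\beta$ and $P(\ph)\leq 0$ gives $P(t\ph)\geq(1-t)\log\beta$. For a convex function the chord lies \emph{above} the graph on the interval between the endpoints, so what convexity actually gives on $(0,1)$ is the reverse inequality $P(t\ph)\leq(1-t)\log\beta$. Your bound is correct only for $t<0$. Convexity alone does not rule out the scenario where $P(t\ph)$ hits zero at some $t_0<1$ and stays at zero on $[t_0,\infty)$; something extra is needed. The paper supplies this via Lemmas~\ref{lem:posexp} and~\ref{lem:gpposent}: one first shows (by a distortion-and-law-of-large-numbers argument due to Dolgopyat) that the set of points with zero lower Lyapunov exponent has Lebesgue measure zero, and then uses a positive-measure set of uniformly expanding points to get $P(t\ph)\geq(1-t)\log\lambda$ for some $\lambda>1$. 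This lower bound is exactly what your argument for part~(2) also needs: with only the upper bound $P(t\ph)\leq(1-t)\log\beta$, the left derivative at $t=1$ could be zero, and non-differentiability does not follow.

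A secondary point: your sketch for the Bowen property of $\phb$ bounds the oscillation of $t\ph$ on each cylinder $\pi([0^m z])$ by a summable quantity in $m$, but summing these over all blocks of a word $w$ of length $N$ gives a bound proportional to the number of non-zero symbols in $w$, which is not uniform in $N$. The paper's proof (Lemma~\ref{lem:phbbowen}) closes this by using Young's distortion estimate to control the Birkhoff-sum oscillation over each block $0^{\ell_j}z_j$ by $|f^{k_{j+1}}x-f^{k_{j+1}}y|$, and then invoking the \emph{uniform} expansion on branches $I_j$ with $j\neq 0$ to turn the sum over blocks into a geometric series. Your Hölder-plus-polynomial-diameter outline does not capture this contraction between blocks.
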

\begin{proof}
We show that $\ph \circ \pi \in \AAA(\Sigma_\beta, \{0\})$, and it immediately follows that $t\ph \circ \pi \in \AAA(\Sigma_\beta, \{0\})$ for all $ t$. For $t <1$, we then verify the hypotheses of Theorem~\ref{lem:pospressenough} to establish uniqueness of equilibrium states.  The second and third parts of the theorem follow directly from the inequality $h_\mu(f) \leq \int \log|f'(x)|\,d\mu$ for all ergodic $f$-invariant measures, which holds for this family of maps by~\cite[Theorem 1]{fH91} (see also~\cite[Theorem 7.1]{BJ12}).

Let $x_0$ be the solution of $x + \gamma x^{1+\eps} =1$, and let $x_n$ be the unique number with $0<x_n<x_0$ and $x_{n-1} = f(x_n)$. Note that if $I_0$ is the first interval of monotonicity of $(I, f)$, then $I_0 = (0, x_0)$. Note also that for any $n\geq 1$ and $z\neq 0$, that $\pi(0^nz) \in [x_{n}, x_{n-1}]$. Using the notation conventions of \S \ref{sect:nbb}, we find functions $\phg$ and $\phb$ so that $\ph \circ \pi = \phb + \phg$.
With $X=\Sigma_\beta$ and $Y=\{0\}$, we have $\FFF(X,Y) = \{ 0^n c \mid n\geq 0, c\in \{1,\dots,b-1\} \}$. Define a grid function $\phg$ as in Definition~\ref{def:gridfn} using the values $a_{0^n c} = \ph(x_n)$, so that
\[
\phg(x) = \ph(x_{n}) \text{ whenever } \pi(x) \in (x_{n+1}, x_n],
\]
and $\phg(0) = 0$. Let $\phb = \ph\circ \pi - \phg$. 
The following lemma is proved in \S \ref{sec:lemmasinMP}.
\begin{lemma}\label{lem:phbbowen}
The potential $\phb$ has the Bowen property, while the potential $\ph \circ \pi$ does not.
\end{lemma}
\begin{remark}
The first part of Lemma \ref{lem:phbbowen} is immediate if $\ph$ has the Bowen property with respect to the countable partition $\{(x_0, 1], (x_1, x_0], (x_2, x_1], \ldots\}$, and this is essentially what we prove. For the Manneville--Pomeau map, this was shown by Sarig in \cite[Claim 2]{oS01}. Our proof relies on elegant elementary distortion estimates proved by Young \cite[\S 6]{lsY99}.
\end{remark}
We now assume that $t< 1$ and verify that $P(t\ph)>0$. A key ingredient is the following lemma, whose proof was communicated to us by D.\  Dolgopyat.
\begin{lemma}\label{lem:posexp}
Let $Z_0 = \{x\in I \mid \llim \frac 1n \log (f^n)'(x) = 0 \}$.  Then $\Leb Z_0 = 0$.
\end{lemma}
This is the key tool in deriving the following estimate.
\begin{lemma}\label{lem:gpposent}
For the geometric potential $\ph = -\log f'$, there exists $\lambda>1$ such that $P(t\ph) \geq (1-t)\log \lambda > 0$ for all $t <1$. 
\end{lemma}
Applying Theorem~\ref{lem:pospressenough}, this shows that $t\ph$ has a unique equilibrium state for every $t<1$.  Because the entropy map is upper semi-continuous, this implies that the pressure function is $C^1$. 
\end{proof}
\begin{remark}
The assumption that $0<\eps<1$ is only used in the proofs of Lemmas \ref{lem:posexp} and~\ref{lem:gpposent}. If the inequality $P(t\ph) > 0$  for $t<1$ can be established by other means, then Theorem \ref{thm:MP} follows even if $\eps \geq 1$.
\end{remark}
\section{Proof of Theorem~\ref{thm:main}} \label{proof}
\subsection{Estimates of partition sums}
\begin{lemma}\label{lem:Gnleq}
For every $M \geq 0$, there exists $D_M >0$ such that $\Lambda_n(\GGG(M),\ph) \leq D_M e^{nP(\ph)}$ for all $n$.
\end{lemma}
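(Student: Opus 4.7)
The plan is to use $(S)$-specification on $\GGG(M)$ to concatenate many words from $\GGG(M)_n$ into a single word in $\LLL$, convert this into an inequality between partition sums, and then take $k$-th roots to remove the subexponential factors that would otherwise appear.

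Fix $M$ and let $t = t_M$ be the gap size given by condition (\ref{cond:spec}). Given any $k$-tuple $(w^1,\dots,w^k) \in \GGG(M)_n^k$, specification produces connecting words $v^1,\dots,v^{k-1}$, each of length exactly $t$, so that $u := w^1 v^1 \cdots v^{k-1} w^k \in \LLL_N$ with $N = kn + (k-1)t$. Since the positions of the $w^i$-blocks inside $u$ are determined by $n, t, k$, the assignment $(w^1,\dots,w^k) \mapsto u$ is injective into $\LLL_N$.

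Next I will control the Birkhoff sums. Each $w^i \in \GGG(M)$ admits a decomposition $w^i = p^i g^i s^i$ with $p^i \in \CCC^p$, $g^i \in \GGG$, $s^i \in \CCC^s$ and $|p^i|,|s^i| \leq M$. For any $x \in [u]$, let $a_i$ be the starting position of $w^i$ inside $u$, so $\sigma^{a_i} x \in [w^i]$. The contributions of $p^i$ and $s^i$ to $S_n\ph$ are each bounded in modulus by $M\|\ph\|$, while on the $g^i$-block the Bowen property on $\GGG$ (condition (\ref{cond:bowen})) gives variation at most $V := \sup_n V_n(\GGG,S_n\ph)$. This yields $\ph_n(w^i) \leq S_n\ph(\sigma^{a_i} x) + V + 4M\|\ph\|$. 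Summing over $i$, and noting that the excluded gap positions contribute in total at most $(k-1)t\|\ph\|$, gives $\sum_i \ph_n(w^i) \leq S_N\ph(x) + kC_1$ with $C_1 := V + 4M\|\ph\| + t\|\ph\|$. Taking the supremum over $x \in [u]$ replaces $S_N\ph(x)$ by $\ph_N(u)$.

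Combining injectivity with this pointwise bound and summing over tuples,
\begin{equation*}
\Lambda_n(\GGG(M),\ph)^k = \sum_{(w^1,\dots,w^k)} \exp\Bigl(\sum_i \ph_n(w^i)\Bigr) \leq e^{kC_1} \sum_{u \in \LLL_N} e^{\ph_N(u)} = e^{kC_1}\Lambda_N(\LLL,\ph).
\end{equation*}
Since $P(\ph) = \limsup_N \tfrac{1}{N}\log \Lambda_N(\LLL,\ph)$, for any $\eps>0$ there is $C_\eps$ with $\Lambda_N(\LLL,\ph) \leq C_\eps e^{N(P(\ph)+\eps)}$ for all $N$. Taking $k$-th roots and letting $k\to\infty$ (so $N/k \to n+t$) then $\eps \to 0$ yields $\Lambda_n(\GGG(M),\ph) \leq e^{C_1 + tP(\ph)} e^{nP(\ph)}$, and the desired constant is $D_M = e^{C_1 + tP(\ph)}$, which depends on $M$ through $t_M$ and the factor $4M\|\ph\|$.

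The only delicate point is the comparison $\ph_n(w^i) \leq S_n\ph(\sigma^{a_i} x) + O(1)$: the Bowen property only applies to the inner $\GGG$-block, so the prefix/suffix pieces must be handled by the crude $\|\ph\|$ bound, which is what forces $D_M$ to depend on $M$. Everything else is a routine specification-plus-concatenation argument, with the $k\to\infty$ limit absorbing the $(S)$-specification overhead into the constant.
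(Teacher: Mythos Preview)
Your proof is correct and follows essentially the same route as the paper: concatenate $k$ words from $\GGG(M)_n$ via $(S)$-specification, compare Birkhoff sums using the Bowen bound on $\GGG$ plus the crude $M\|\ph\|$ correction on the prefix/suffix pieces, and then send $k\to\infty$ to extract the pressure. The only cosmetic differences are that the paper appends a final gap word (so the concatenated length is $k(n+t_M)$ rather than your $kn+(k-1)t$) and passes to the limit directly along that subsequence rather than via an auxiliary $\eps$; neither changes the substance of the argument.
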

\begin{proof*}
Using Condition~\eqref{cond:spec}, there is a map $\pi\colon \GGG(M)_n^k \to \LLL_{k(n+t_M)}$ such that $\pi(\vec{w}) = w^1 v^1 w^2 \cdots w^k v^k$ for some $v^i \in \LLL_{t_M}$.  Let $V_M := \sup_n V_n(\GGG(M),S_n\ph)$, and observe that 
\[
V_M \leq 4M\|\ph\| + \sup_k V_k(\GGG,S_k\ph) < \infty,
\]
using the fact that $\ph\in \WWW(\GGG)$.  Furthermore, 
\[
\ph_{k(n+t_M)}(\pi(\vec{w})) \geq
\sum_{j=1}^k \ph_n(w^j) - k(V_M + t_M) \|\ph\|.
\]
Thus we have
\begin{align*}
\Lambda_{k(n+t_M)}(\LLL,\ph) &\geq \sum_{\vec{w}\in \GGG(M)_n^k} e^{(\sum_{j=1}^k \ph_n(w^j)) - k(V_M+t_M\|\ph\|)} \\
&= (\Lambda_n(\GGG(M),\ph) e^{-(V_M+t\|\ph\|)})^k,
\end{align*}
whence
\[
\frac 1{n+t_M} \log \Lambda_n(\GGG(M),\ph) \leq \frac 1{k(n+t_M)} \log \Lambda_{k(n+t_M)}(\LLL,\ph) + \frac {V_M + t_M\|\ph\|}{n+t_M}.
\]
Sending $k\to\infty$ gives
\[
\singlebox
\frac 1n \log \Lambda_n(\GGG(M),\ph) \leq
\frac{n+t_M}n \left( P(\ph) + \frac {V_M + t_M\|\ph\|}{n+t_M} \right).
\esinglebox\qedhere
\]
\end{proof*}
\begin{proposition}\label{prop:Gngeq}
Suppose $\DDD\subset \LLL$ and $C_1>0$ are such that $\Lambda_n(\DDD,\ph) \geq C_1 e^{nP(\ph)}$ for every $n\in \NN$.  Then for every $\delta>0$, there exists $M\in \NN$ such that $\Lambda_n(\DDD\cap \GGG(M),\ph) \geq (1-\delta) \Lambda_n(\DDD,\ph)$ for every $n\in \NN$.
\end{proposition}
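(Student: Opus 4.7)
The plan is to show $\Lambda_n(\DDD_n \setminus \GGG(M)_n, \ph) \leq \delta \Lambda_n(\DDD, \ph)$ for $M$ large enough. Since $\DDD_n \setminus \GGG(M)_n \subset \LLL_n \setminus \GGG(M)_n$, it is enough to bound $\Lambda_n(\LLL \setminus \GGG(M), \ph)$ and compare it to the hypothesis $\Lambda_n(\DDD, \ph) \geq C_1 e^{nP(\ph)}$. The key observation is that every word in $\LLL$ admits a decomposition $x=uvw$ with $u\in\CCC^p$, $v\in\GGG$, $w\in\CCC^s$, and for $x \in \LLL \setminus \GGG(M)$ every such decomposition must satisfy $\max(|u|,|w|)>M$.

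Summing over all such triples and using the sub-additivity $\ph_n(uvw) \leq \ph_{|u|}(u) + \ph_{|v|}(v) + \ph_{|w|}(w)$, I would derive
\[
\Lambda_n(\LLL \setminus \GGG(M), \ph) \leq \sum_{i+j+k=n,\,\max(i,k)>M} \Lambda_i(\CCC^p, \ph)\, \Lambda_j(\GGG, \ph)\, \Lambda_k(\CCC^s, \ph).
\]
Applying Lemma~\ref{lem:Gnleq} with $M=0$ (so $\GGG(0)=\GGG$) gives $\Lambda_j(\GGG, \ph) \leq D_0 e^{jP(\ph)}$. Factoring out $e^{nP(\ph)}$ and writing $\alpha_i := \Lambda_i(\CCC^p, \ph) e^{-iP(\ph)}$, $\beta_k := \Lambda_k(\CCC^s, \ph) e^{-kP(\ph)}$, this becomes
\[
\Lambda_n(\LLL \setminus \GGG(M), \ph) \leq D_0\, e^{nP(\ph)} \sum_{i+k\leq n,\,\max(i,k)>M} \alpha_i \beta_k.
\]

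Condition~\eqref{cond:PR}, together with $\Lambda_n(\CCC^p\cup\CCC^s,\ph) \geq \max(\Lambda_n(\CCC^p,\ph),\Lambda_n(\CCC^s,\ph))$, implies that $A := \sum_i \alpha_i$ and $B := \sum_k \beta_k$ are both finite. Splitting the double sum by whether $i>M$ or $k>M$, I obtain $\sum_{\max(i,k)>M} \alpha_i \beta_k \leq B\sum_{i>M}\alpha_i + A\sum_{k>M}\beta_k$, which tends to $0$ as $M\to\infty$. Choosing $M$ large enough so this tail is less than $\delta C_1/D_0$ yields $\Lambda_n(\LLL \setminus \GGG(M), \ph) < \delta C_1 e^{nP(\ph)} \leq \delta \Lambda_n(\DDD, \ph)$, uniformly in $n$, which gives the desired lower bound for $\Lambda_n(\DDD \cap \GGG(M), \ph)$. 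The only real obstacle is the bookkeeping in the decomposition step, namely making the product structure factor cleanly over $\CCC^p$, $\GGG$, $\CCC^s$ so that Lemma~\ref{lem:Gnleq} and the summability from \eqref{cond:PR} can both be applied; after that, the argument is a tail-of-convergent-series estimate.
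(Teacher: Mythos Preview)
Your proof is correct and follows essentially the same route as the paper's: decompose each word outside $\GGG(M)$ as $uvw$ with $\max(|u|,|w|)>M$, use sub-additivity of $\ph_n$ to factor the sum, control the $\GGG$-factor via Lemma~\ref{lem:Gnleq} with $M=0$, and then invoke Condition~\eqref{cond:PR} to make the tail small. The only cosmetic difference is that the paper bounds both $\Lambda_i(\CCC^p,\ph)$ and $\Lambda_k(\CCC^s,\ph)$ by the single sequence $a_i=\Lambda_i(\CCC^p\cup\CCC^s,\ph)e^{-iP(\ph)}$, whereas you keep separate sequences $\alpha_i,\beta_k$; this changes nothing of substance.
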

\begin{proof*}
Let $a_i = \Lambda_i(\CCC^p \cup \CCC^s, \ph) e^{-iP(\ph)}$, so $\sum_i a_i < \infty$ by Condition~\eqref{cond:PR}.  Every word $y\in \DDD_n$ has the form $uvw$ for some $u\in \CCC_i^p$, $v\in \GGG_j$, and $w\in \CCC_k^s$, where $i+j+k=n$.  If $i\vee k\leq M$, then $y\in \GGG(M)$; thus using the inequality $\ph_n(y) \leq \ph_i(u) + \ph_j(v) + \ph_k(w)$, we have
\[
\Lambda_n(\DDD,\ph) \leq \Lambda_n(\DDD\cap \GGG(M),\ph) + 
\sum_{\substack{i+j+k=n \\ i\vee k > M}} \Lambda_i(\CCC^p,\ph)\Lambda_j(\GGG,\ph) \Lambda_k(\CCC^s,\ph).
\]
Multiplying both sides by $e^{-nP(\ph)}$ and using the result of Lemma~\ref{lem:Gnleq}, we obtain
\[
\Lambda_n(\DDD,\ph) e^{-nP(\ph)} \leq \Lambda_n(\DDD\cap \GGG(M),\ph) e^{-nP(\ph)} + 
\sum_{i\vee k > M} a_i a_k D_0.
\]
Because $\sum a_i < \infty$, we can choose $M$ such that
\[
\sum_{i\vee k>M} a_i a_k D_0 \leq \delta C_1 \leq \delta \Lambda_n(\DDD,\ph) e^{-nP(\ph)},
\]
and so we have
\[
\singlebox
(1-\delta)\Lambda_n(\DDD,\ph) e^{-nP(\ph)} \leq \Lambda_n(\DDD\cap \GGG(M),\ph) e^{-nP(\ph)}. \qedhere
\esinglebox
\] 
\end{proof*}
\begin{proposition}\label{prop:Ln}
There exists $C_2 > 0$ such that $e^{nP(\ph)} \leq \Lambda_n(\LLL,\ph) \leq C_2 e^{nP(\ph)}$ for all $n$.
\end{proposition}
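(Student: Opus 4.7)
The plan is to prove the two bounds separately. The lower bound follows essentially for free from a standard subadditivity argument, while the upper bound is the content of the proposition and is where the decomposition hypothesis and Condition~\eqref{cond:PR} actually enter.

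For the lower bound $e^{nP(\ph)} \leq \Lambda_n(\LLL,\ph)$, I would establish submultiplicativity of $\Lambda_n(\LLL,\ph)$. Given any $w \in \LLL_{n+m}$, split $w = uv$ with $u \in \LLL_n$, $v \in \LLL_m$ (the first $n$ and last $m$ symbols); this map $w \mapsto (u,v)$ is injective. The elementary inequality $\ph_{n+m}(w) \leq \ph_n(u) + \ph_m(v)$ (the sup of a sum does not exceed the sum of sups) then gives $\Lambda_{n+m}(\LLL,\ph) \leq \Lambda_n(\LLL,\ph) \Lambda_m(\LLL,\ph)$. Hence $\log \Lambda_n(\LLL,\ph)$ is subadditive, and by Fekete's lemma $\lim_n \tfrac{1}{n}\log\Lambda_n(\LLL,\ph) = \inf_n \tfrac{1}{n}\log\Lambda_n(\LLL,\ph)$. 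Since the limsup defining $P(\ph)$ equals this common value, $P(\ph) \leq \tfrac{1}{n}\log\Lambda_n(\LLL,\ph)$ for every $n$, which is the claimed lower bound.

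For the upper bound, I would use the decomposition $\LLL = \CCC^p \GGG \CCC^s$. Every $y \in \LLL_n$ can be written as $y = uvw$ with $u \in \CCC^p_i$, $v \in \GGG_j$, $w \in \CCC^s_k$, $i+j+k=n$ (with the convention that the empty prefix or suffix contributes a factor of $1$). Applying the same sup-of-sum inequality gives $\ph_n(y) \leq \ph_i(u) + \ph_j(v) + \ph_k(w)$, so (allowing overcounting, harmless for an upper bound)
\[
\Lambda_n(\LLL,\ph) \leq \sum_{i+j+k=n} \Lambda_i(\CCC^p,\ph)\,\Lambda_j(\GGG,\ph)\,\Lambda_k(\CCC^s,\ph).
\]
Since $\GGG(0) = \GGG$, Lemma~\ref{lem:Gnleq} with $M=0$ yields $\Lambda_j(\GGG,\ph) \leq D_0 e^{jP(\ph)}$. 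Setting $a_i := \Lambda_i(\CCC^p \cup \CCC^s,\ph) e^{-iP(\ph)}$ with the convention $a_0 := 1$, Condition~\eqref{cond:PR} gives $S := \sum_i a_i < \infty$, so
\[
\Lambda_n(\LLL,\ph) e^{-nP(\ph)} \leq D_0 \sum_{i+k \leq n} a_i a_k \leq D_0 S^2,
\]
and we can take $C_2 = D_0 S^2$.

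There is really no serious obstacle: the lower bound is a Fekete-style triviality, and the upper bound is a straightforward convolution estimate once Lemma~\ref{lem:Gnleq} and Condition~\eqref{cond:PR} are available. The only real thing to be careful about is handling the empty prefix/suffix cases in the decomposition, which is purely a notational point.
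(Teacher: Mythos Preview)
Your proof is correct. The lower bound is handled identically to the paper (submultiplicativity of $\Lambda_n(\LLL,\ph)$, hence the limit is an infimum). For the upper bound the paper takes a slightly different organizational route: it first uses the just-proved lower bound to verify the hypothesis of Proposition~\ref{prop:Gngeq} with $\DDD=\LLL$, obtaining an $M$ with $\Lambda_n(\GGG(M),\ph)\geq \tfrac12\Lambda_n(\LLL,\ph)$, and then applies Lemma~\ref{lem:Gnleq} to $\GGG(M)$. Your argument instead inlines the convolution estimate $\Lambda_n(\LLL,\ph)\leq \sum_{i+j+k=n}\Lambda_i(\CCC^p,\ph)\Lambda_j(\GGG,\ph)\Lambda_k(\CCC^s,\ph)$ directly---which is precisely the computation hidden inside the proof of Proposition~\ref{prop:Gngeq}---and bounds it using Lemma~\ref{lem:Gnleq} at $M=0$ together with Condition~\eqref{cond:PR}. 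Your version is marginally more self-contained (the upper bound does not rely on the lower bound), while the paper's version reuses a proposition already on the shelf; the underlying mathematics is the same.
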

\begin{proof}
Define a map $\pi \colon \bigcup_{i=n}^\infty \LLL_{i} \to \LLL_n$ by $\pi(w) = w_1 \ldots w_n$. Then \[
\ph_{nk}(w) \leq \sum_{i=1}^k \ph_n (\pi(\sigma^{(i-1)n}w)),
\]
which gives
\[
\Lambda_{nk}(\LLL,\ph) \leq \sum_{w\in \LLL_{nk}} e^{\sum_{i=1}^k \ph_n (\pi(\sigma^{(i-1)n}w))}
\leq \Lambda_n(\LLL,\ph)^k.
\]
Thus $\frac 1{nk} \log \Lambda_{nk}(\LLL,\ph) \leq \frac 1n \log \Lambda_n(\LLL,\ph)$, and sending $k\to\infty$ gives the first inequality in the statement of the proposition.

For the second inequality, we apply Proposition~\ref{prop:Gngeq} with $\DDD = \LLL$ to see that there exists $M$ such that $\Lambda_n(\GGG(M),\ph) \geq \frac 12 \Lambda_n(\LLL,\ph)$ for all $n\in \NN$. An application of Lemma~\ref{lem:Gnleq} completes the argument.
\end{proof}

Given $\mu\in \MMM(X)$ and $\DDD_n \subset \LLL_n$, we write $\mu(\DDD_n) = \mu(\bigcup_{w\in \DDD_n} [w])$.
\begin{proposition}\label{prop:Gngeq2}
For every $\gamma>0$, there exist $M\in \NN$ and $C_1>0$ such that if $\nu$ 
is an equilibrium state for $\ph$, then every collection $\DDD_n \subset \LLL_n$ with $\nu(\DDD_n)\geq \gamma$ satisfies
\begin{equation}\label{eqn:Gngeq}
\Lambda_n(\DDD \cap \GGG(M),\ph) \geq C_1 e^{nP(\ph)}.
\end{equation}
\end{proposition}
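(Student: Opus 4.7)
My plan is a two-step strategy. First I will use the equilibrium-state property of $\nu$ to obtain a \emph{uniform} (in $n$) lower bound $\Lambda_n(\DDD_n,\ph) \geq C_1' e^{nP(\ph)}$ with $C_1'$ depending only on $\gamma$ and the constant $C_2$ of Proposition~\ref{prop:Ln}. I will then apply the argument of Proposition~\ref{prop:Gngeq} with $\delta = 1/2$ to deduce the existence of $M \in \NN$ such that $\Lambda_n(\DDD_n \cap \GGG(M),\ph) \geq \tfrac12 \Lambda_n(\DDD_n,\ph)$ for every $n$; setting $C_1 = \tfrac12 C_1'$ then gives the result. The observation needed for the second step is that although Proposition~\ref{prop:Gngeq} is stated for a fixed $\DDD \subset \LLL$, its proof only invokes the hypothesis $\Lambda_n(\DDD,\ph) \geq C_1 e^{nP(\ph)}$ separately for each $n$, and the $M$ it produces depends only on $\delta$ and $C_1$, so the argument applies verbatim to an $n$-dependent family $\DDD_n$.

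For the first step, write $\xi_n$ for the partition of $X$ into cylinders of length $n$, set $p = \nu(\DDD_n) \geq \gamma$, and apply the log-sum inequality to the weights $\nu([w])$ and $e^{\ph_n(w)}$ separately over $w \in \DDD_n$ and $w \in \LLL_n \setminus \DDD_n$. Summing the two resulting inequalities yields
\[
H_\nu(\xi_n) + \sum_{w \in \LLL_n} \nu([w]) \ph_n(w) \leq p \log \Lambda_n(\DDD_n,\ph) + (1-p)\log \Lambda_n(\LLL_n \setminus \DDD_n,\ph) + \log 2,
\]
with the $\log 2$ absorbing $-p\log p - (1-p)\log(1-p)$. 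The left-hand side is at least $n h_\nu(\sigma) + n\int\ph\,d\nu = nP(\ph)$, using subadditivity of $H_\nu(\xi_n)$ in $n$ (so that $H_\nu(\xi_n) \geq n h_\nu(\sigma)$), the equilibrium identity $h_\nu(\sigma) + \int\ph\,d\nu = P(\ph)$, and the trivial bound $\ph_n(w) \geq S_n\ph(x)$ for $x \in [w]$. Proposition~\ref{prop:Ln} gives $\Lambda_n(\LLL_n \setminus \DDD_n,\ph) \leq C_2 e^{nP(\ph)}$, and rearranging now produces $p\cdot nP(\ph) \leq p\log \Lambda_n(\DDD_n,\ph) + (1-p)\log C_2 + \log 2$; dividing by $p$ and using $p \geq \gamma$ gives the uniform bound with $C_1' = (2C_2)^{-1/\gamma}$.

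The main obstacle is the bookkeeping in the first step: the split has to be arranged so that the bound $(1-p)[nP(\ph) + \log C_2]$ coming from $\Lambda_n(\LLL_n \setminus \DDD_n,\ph)$ cancels exactly against the same fraction of $nP(\ph)$ on the left-hand side, leaving only the $p$-fraction of $nP(\ph)$ to be absorbed by $\Lambda_n(\DDD_n,\ph)$; this cancellation is what forces $C_1'$ to depend on $\gamma$ through the factor $1/\gamma$ in the exponent. Once the uniform lower bound is in hand, the second step is essentially a quotation of the proof of Proposition~\ref{prop:Gngeq}.
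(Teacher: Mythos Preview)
Your proposal is correct and follows essentially the same route as the paper. Both arguments use the equilibrium identity together with a log-sum/Gibbs inequality split over $\DDD_n$ and its complement, combine this with the upper bound from Proposition~\ref{prop:Ln} to get $\Lambda_n(\DDD_n,\ph) \geq C_1' e^{nP(\ph)}$ with $C_1'$ depending only on $\gamma$ and $C_2$, and then invoke Proposition~\ref{prop:Gngeq}; the only differences are cosmetic (the paper writes $H$ for your $\log 2$ and phrases the key inequality as $\sum a_i(p_i - \log a_i) \leq \log\sum e^{p_i}$ rather than as the log-sum inequality, and its final constant is organised slightly differently).
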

\begin{proof}
Recall that the entropy of $\nu$ is given by
\[
h(\nu) = \inf_{n\geq 1} \frac 1n \sum_{w\in \LLL_n} -\nu(w) \log \nu(w),
\]
and so for the pressure $P(\ph) = h(\nu) + \int \ph\,d\nu$, we get
\[
nP(\ph) = n\left( h(\nu) + \int \frac 1n S_n\ph\,d\nu \right) 
\leq \sum_{w\in \LLL_n} \nu(w) (\ph_n(w) - \log \nu(w)).
\]
Observe that
\begin{align*}
\sum_{w\in \DDD_n} &\nu(w) (\ph_n(w) - \log \nu(w)) \\
&= \nu(\DDD_n) \sum_{w\in \DDD_n} \frac{\nu(w)}{\nu(\DDD_n)} 
\left( \ph_n(w) - \log \frac{\nu(w)}{\nu(\DDD_n)} - \log \nu(\DDD_n) \right) \\
&\leq \nu(\DDD_n) \log \Lambda_n(\DDD,\ph) - \nu(\DDD_n) \log \nu(\DDD_n),
\end{align*}
where we use the fact that if $a_i \geq 0$, $\sum_i a_i = 1$, and $p_i\in \RR$, then
\[
\sum_i a_i(p_i - \log a_i) \leq \log \sum_i e^{p_i}.
\]
Now writing $\DDD_n^c = \LLL_n \setminus \DDD_n$, the above estimates give
\[
nP(\ph) \leq \nu(\DDD_n) \log \Lambda_n(\DDD,\ph) + \nu(\DDD_n^c) \log \Lambda_n(\DDD^c,\ph) + H,
\]
where $H = \sup_{t\in [0,1]} (-t\log t - (1-t)\log (1-t))$.  Using the fact that $\Lambda_n(\DDD^c,\ph) \leq \Lambda_n(\LLL,\ph)$, we see from Proposition~\ref{prop:Ln} that
\begin{align*}
nP(\ph) &\leq \nu(\DDD_n) \log \Lambda_n(\DDD,\ph) + (1-\nu(\DDD_n))(nP(\ph) + \log C_2) + H \\
&= \nu(\DDD_n)(\log \Lambda_n(\DDD,\ph) - nP(\ph) - \log C_2) + nP(\ph) + \log C_2 + H.
\end{align*}
Now since $\nu(\DDD_n) \geq \gamma$, we have
\[
\log \Lambda_n(\DDD,\ph) - nP(\ph) - \log C_2 \geq -\frac{\log C_2 + H}{\nu(\DDD_n)} \geq -\frac{\log C_2 + H}{\gamma},
\]
which shows that there exists $C_1>0$ such that $\Lambda_n(\DDD,\ph) \geq C_1 e^{nP(\ph)}$. An application of Proposition \ref{prop:Gngeq} completes the proof.
\end{proof}

\subsection{A Gibbs property}
We build an equilibrium state $\mu$ as a limit of $\delta$-measures $\mu_n$ distributed on $n$-cylinders according to the weights given by $e^{\ph_n(w)}$.  To be precise, for each $w\in \LLL$, fix a point $x(w)\in [w]$; then consider the measures defined by
\[
\nu_n = \frac 1{\Lambda_n(\LLL,\ph)} \sum_{x\in \LLL_n} e^{\ph_n(w)} \delta_{x(w)}, \qquad \qquad
\mu_n = \frac 1n \sum_{k=0}^{n-1} (\sigma^*)^k \nu_n,
\]
and let $\mu$ be a weak* limit of the sequence ${\mu_n}$. It is shown in~\cite[Theorem 9.10]{Wa} that $h(\mu) + \int \ph\,d\mu = P(\ph)$, so it remains to show that there can be no other equilibrium states.

\begin{proposition}\label{prop:Gibbs}
For sufficiently large $M$, there exists a constant $K_M$ such that for every $n\in \NN$ and $w\in \GGG(M)$, we have $\mu([w]) \geq K_M e^{-nP(\ph) + \ph_n(w)}$.
\end{proposition}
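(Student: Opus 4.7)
The plan is to show that for every large $N$, $\mu_N([w]) \geq K_M e^{-nP(\ph) + \ph_n(w)}$ uniformly, then transfer this to $\mu$ via weak$^*$ convergence using that cylinders in a shift are clopen. First I would invoke Proposition~\ref{prop:Gngeq} with $\DDD = \LLL$ and $\delta = 1/2$, together with $\Lambda_k(\LLL, \ph) \geq e^{kP(\ph)}$ from Proposition~\ref{prop:Ln}, to fix $M$ large enough that $\Lambda_k(\GGG(M), \ph) \geq \tfrac12 e^{kP(\ph)}$ for every $k$. Then I would fix $w \in \GGG(M)_n$ and write $t_M$ for the gap size of the \F-specification on $\GGG(M)$. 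The Bowen constant on the fattened family, $V_M := \sup_k V_k(\GGG(M), S_k\ph)$, is finite by the estimate $V_M \leq 4M\|\ph\| + \sup_k V_k(\GGG, S_k\ph)$ extracted from the proof of Lemma~\ref{lem:Gnleq}.

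For any decomposition $N = k + t_M + n + t_M + \ell$ with $k, \ell \geq 1$, \F-specification applied to the triple $(u_1, w, u_2) \in \GGG(M)_k \times \GGG(M)_n \times \GGG(M)_\ell$ produces connecting words $v_1(u_1, u_2), v_2(u_1, u_2) \in \LLL_{t_M}$ such that $y(u_1, u_2) := u_1 v_1 w v_2 u_2 \in \LLL_N$. Distinct pairs $(u_1, u_2)$ yield distinct $y$, since $u_1$ and $u_2$ can be read off from the first $k$ and last $\ell$ symbols. Splitting $S_N\ph$ into the five natural blocks, using the Bowen bound on $\GGG(M)$ to control the three long blocks and $t_M\|\ph\|$ on each connector, I get
\[
\ph_N(y(u_1, u_2)) \geq \ph_k(u_1) + \ph_n(w) + \ph_\ell(u_2) - 3V_M - 2t_M\|\ph\|.
\]
Summing over $(u_1, u_2) \in \GGG(M)_k \times \GGG(M)_\ell$, dividing by $\Lambda_N(\LLL, \ph) \leq C_2 e^{NP(\ph)}$, and using the lower bounds on $\Lambda_k(\GGG(M), \ph)$ and $\Lambda_\ell(\GGG(M), \ph)$ together with $k + \ell = N - 2t_M - n$ gives a $k$-independent bound
\[
\nu_N(\sigma^{-(k+t_M)}[w]) \geq K_M' e^{-nP(\ph) + \ph_n(w)},
\]
with $K_M'$ depending only on $M$.

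Averaging the $N$ shifts in the definition of $\mu_N$ over $k \in \{1, \ldots, N - 2t_M - n - 1\}$ then yields
\[
\mu_N([w]) \geq \frac{N - 2t_M - n - 1}{N}\, K_M'\, e^{-nP(\ph) + \ph_n(w)}.
\]
Since $[w]$ is clopen, weak$^*$ convergence $\mu_{N_j} \to \mu$ along the defining subsequence implies $\mu([w]) = \lim_j \mu_{N_j}([w])$, and letting $N_j \to \infty$ gives the required bound with $K_M := K_M'$. The two technical points to watch are (i) that the cancellation $k + \ell = N - 2t_M - n$ makes the lower bound on $\nu_N(\sigma^{-(k+t_M)}[w])$ independent of $k$, which is precisely what allows the average over $k$ to produce a uniform lower bound rather than collapsing; and (ii) that the Bowen estimate must be applied on the fattened family $\GGG(M)$, which is controlled by the finiteness of $V_M$ noted above. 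With these in hand, the argument reduces to the bookkeeping sketched above.
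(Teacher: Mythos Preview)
Your proposal is correct and follows essentially the same approach as the paper: fix $M$ so that $\Lambda_k(\GGG(M),\ph)$ has the right lower bound, use \F-specification on $\GGG(M)$ to sandwich $w$ between two words from $\GGG(M)$, derive the Birkhoff sum estimate with error $3V_M + 2t_M\|\ph\|$, and combine the resulting lower bound on $\nu_N(\sigma^{-j}[w])$ with the upper bound on $\Lambda_N(\LLL,\ph)$ from Proposition~\ref{prop:Ln}. The paper's proof is terser---it ends with ``Since this holds for all $k$ and $m$, we are done''---whereas you spell out the averaging over $k$ and the passage to the weak$^*$ limit via clopenness of cylinders; this extra care is welcome and fills in exactly what the paper leaves implicit.
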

\begin{proof}
Combining Propositions~\ref{prop:Gngeq} and~\ref{prop:Ln}, we see that for large enough $M$ there exists $C=C(M)>0$ such that $\Lambda_n(\GGG(M),\ph) \geq C e^{nP(\ph)}$ for all $n$.

Fix $w\in \GGG(M)_n$.  We estimate $\mu_m(w)$ for large $m$ by first estimating $\nu_m(\sigma^{-k}(w))$.  Let $t_M$ be the gap size in the specification property, and let $\ell_1 = k-t_M$ and $\ell_2 = m-k-t_M-n$.  From Condition~\eqref{cond:spec}, for every $v^i\in \GGG(M)_{\ell_i}$ there exist words $u^1,u^2\in \LLL_{t_M}$ such that $v^1 u^1 w u^2 v^2 \in \LLL_m$.  Since different choices of $v^1, v^2$ give different words in $\LLL_m$, this defines an injective map $\pi\colon \GGG(M)_{\ell_1}\times \GGG(M)_{\ell_2} \to \LLL_m$. Note that
\[
\ph_m(\pi(v^1, v^2)) \geq \ph_{\ell_1}(v_1) + \ph_n(w) + \ph_{\ell_2}(v_2) - 2t_M \| \ph \|-3V,
\] and this allows us to estimate that
\begin{align*}
\nu_m(\sigma^{-k}(w)) &\geq \frac{\sum_{v^1\in \GGG(M)_{\ell_1}, v^2\in \GGG(M)_{\ell_2}} e^{\ph_m(\pi(v^1,v^2))}}{\Lambda_m(\LLL,\ph)} \\
&\geq \frac{\Lambda_{\ell_1}(\GGG(M),\ph) e^{\ph_n(w)} \Lambda_{\ell_2}(\GGG(M),\ph) e^{-(3V + 2t_M\|\ph\|)}}
{\Lambda_m(\LLL,\ph)}.
\end{align*}
Using the lower bound given above for $\Lambda_{\ell_i}(\GGG(M),\ph)$ together with the result of Proposition~\ref{prop:Ln}, we have
\begin{align*}
\nu_m(\sigma^{-k}([w])) &\geq \frac{C^2 e^{(\ell_1 + \ell_2)P(\ph)} e^{\ph_n(w)} e^{-(3V+2t_M\|\ph\|)}}
{C_2 e^{mP(\ph)}} \\
&= C^2 C_2^{-1} e^{-(3V+2t_M\|\ph\|)} e^{-2t_MP(\ph)} e^{-nP(\ph)+\ph_n(w)}.
\end{align*}
Since this holds for all $k$ and $m$, we are done.
\end{proof}
The Gibbs property above shows that if $\mu$ is ergodic (which we will verify shortly), then $\mu$ is not atomic.
\begin{lemma}\label{lem:upGibbs}
There exists $C_3>0$ such that for every $n\in \NN$ and $w\in \LLL_n$, we have $\mu(w) \leq C_3 e^{-nP(\ph) + \ph_n(w)}$.
\end{lemma}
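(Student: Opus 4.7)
The plan is to mirror the lower-bound argument of Proposition~\ref{prop:Gibbs}, but this time estimating $\nu_m(\sigma^{-k}([w]))$ from \emph{above} via the standard $\Lambda_m$-bounds of Proposition~\ref{prop:Ln}, and then averaging over $k$.

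Fix $w \in \LLL_n$ and $m > n$. For $0 \leq k \leq m-n$, any $v \in \LLL_m$ with $\sigma^{-k}(x(v)) \in [w]$ must have $v_{k+1} \cdots v_{k+n} = w$, so we may write $v = u_1 w u_2$ with $u_1 \in \LLL_k$ and $u_2 \in \LLL_{m-k-n}$. Using the trivial bound $\ph_m(v) \leq \ph_k(u_1) + \ph_n(w) + \ph_{m-k-n}(u_2)$ (which follows by splitting the Birkhoff sum $S_m\ph$ over the three segments and taking suprema), and then dropping the constraint that the concatenation actually lies in $\LLL_m$, I obtain
\[
\nu_m(\sigma^{-k}([w])) \leq \frac{\Lambda_k(\LLL,\ph)\,e^{\ph_n(w)}\,\Lambda_{m-k-n}(\LLL,\ph)}{\Lambda_m(\LLL,\ph)}.
\]
Applying Proposition~\ref{prop:Ln} (upper bound $C_2 e^{jP(\ph)}$ on the numerator factors, lower bound $e^{mP(\ph)}$ on the denominator) yields
\[
\nu_m(\sigma^{-k}([w])) \leq C_2^2 \, e^{-nP(\ph) + \ph_n(w)}
\]
for every $0 \leq k \leq m-n$.

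Next, I average: for the remaining indices $m-n < k \leq m-1$ I use only the trivial bound $\nu_m(\sigma^{-k}([w])) \leq 1$, so that
\[
\mu_m([w]) = \frac{1}{m}\sum_{k=0}^{m-1} \nu_m(\sigma^{-k}([w])) \leq C_2^2\, e^{-nP(\ph) + \ph_n(w)} + \frac{n}{m}.
\]
Since $[w]$ is clopen in $X$, its indicator is continuous, so $\mu_{m_j}([w]) \to \mu([w])$ along the subsequence defining $\mu$. Letting $m_j \to \infty$ gives $\mu([w]) \leq C_2^2 e^{-nP(\ph) + \ph_n(w)}$, so the lemma holds with $C_3 := C_2^2$.

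There is no real obstacle here; the only point that needs a moment of care is the subadditivity $\ph_m(u_1 w u_2) \leq \ph_k(u_1) + \ph_n(w) + \ph_{m-k-n}(u_2)$, which follows from the pointwise splitting of $S_m\ph$ on $[u_1wu_2]$ before taking supremum, and the bookkeeping of the at most $n$ `boundary' indices $k > m-n$ that must be handled separately when averaging.
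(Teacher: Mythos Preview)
Your proof is correct and follows essentially the same approach as the paper's: bound $\nu_m(\sigma^{-k}([w]))$ by $\Lambda_k(\LLL,\ph)\,e^{\ph_n(w)}\,\Lambda_{m-k-n}(\LLL,\ph)/\Lambda_m(\LLL,\ph)$ and apply Proposition~\ref{prop:Ln}. Your version is in fact more carefully written than the paper's (you explicitly handle the boundary indices $k>m-n$ and the passage to the weak* limit); the only slip is notational---``$\sigma^{-k}(x(v))\in[w]$'' should read ``$x(v)\in\sigma^{-k}([w])$'' (equivalently $\sigma^k(x(v))\in[w]$).
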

\begin{proof}
Fix $m > n$ and $k<m-n$.  Using Proposition~\ref{prop:Ln}, we have
\begin{align*}
\nu_m(\sigma^{-k}([w])) &\leq \frac{\Lambda_k(\LLL,\ph) e^{\ph_n(w)} \Lambda_{m-k-n}(\LLL,\ph)}
{\Lambda_m(\LLL,\ph)} \\
&\leq \frac{C_2^2 e^{(m-n)P(\ph)}e^{\ph_n(w)}}{e^{mP(\ph)}},
\end{align*}
and the result follows upon passing to the limit.
\end{proof}
Using the Gibbs property, we show that $\mu$ is concentrated on cylinders from $\GGG(M)$ in the following sense.
\begin{lemma}\label{lem:mostinGM}
Let $\delta_1>0$.  There exists $M$ such that for all $n$, any subset $\DDD_n \subset \LLL_n$ satisfies $\mu(\DDD_n \cap \GGG(M)) \geq \mu(\DDD_n) - \delta_1$.
\end{lemma}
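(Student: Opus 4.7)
The plan is to bound $\mu(\LLL_n \setminus \GGG(M))$ uniformly in $n$, making it arbitrarily small as $M \to \infty$. If we can do this, then for any $\DDD_n \subset \LLL_n$ we have
\[
\mu(\DDD_n) \leq \mu(\DDD_n \cap \GGG(M)) + \mu(\LLL_n \setminus \GGG(M)),
\]
so choosing $M$ large enough that $\mu(\LLL_n \setminus \GGG(M)) < \delta_1$ finishes the proof.

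To bound $\mu(\LLL_n \setminus \GGG(M))$, I would first apply the upper Gibbs bound from Lemma~\ref{lem:upGibbs} to obtain
\[
\mu(\LLL_n \setminus \GGG(M)) \leq C_3 e^{-nP(\ph)} \Lambda_n(\LLL \setminus \GGG(M), \ph).
\]
Now I would control the partition sum on the right by exploiting the decomposition $\LLL = \CCC^p \GGG \CCC^s$. For every $y \in \LLL_n$, pick a decomposition $y = uvw$ with $u \in \CCC^p_i$, $v \in \GGG_j$, $w \in \CCC^s_k$, $i+j+k=n$. If $y \notin \GGG(M)$, then this forced decomposition must satisfy $i > M$ or $k > M$ (otherwise $y$ would belong to $\GGG(M)$ by definition). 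Using the elementary inequality $\ph_n(y) \leq \ph_i(u) + \ph_j(v) + \ph_k(w)$ and counting each $y$ possibly more than once, I get
\[
\Lambda_n(\LLL \setminus \GGG(M),\ph) \leq \sum_{\substack{i+j+k=n \\ i \vee k > M}} \Lambda_i(\CCC^p,\ph) \Lambda_j(\GGG,\ph) \Lambda_k(\CCC^s,\ph).
\]

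The bounds on the three factors come from results already established. By Lemma~\ref{lem:Gnleq} applied with $M=0$, we have $\Lambda_j(\GGG,\ph) \leq D_0 e^{jP(\ph)}$. Setting $a_i := \Lambda_i(\CCC^p \cup \CCC^s,\ph) e^{-iP(\ph)}$ as in the proof of Proposition~\ref{prop:Gngeq}, the hypothesis (III) gives $\sum_i a_i < \infty$. Putting everything together yields
\[
\mu(\LLL_n \setminus \GGG(M)) \leq C_3 D_0 \sum_{\substack{i+k \leq n \\ i \vee k > M}} a_i a_k \leq C_3 D_0 \sum_{i \vee k > M} a_i a_k,
\]
and the right-hand side tends to $0$ as $M \to \infty$, independently of $n$. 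Choosing $M$ large enough to make this bound smaller than $\delta_1$ completes the proof.

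There is no serious obstacle here: the argument is a routine combination of the upper Gibbs bound with the summability condition (III), and the only mild subtlety is to remember that $\GGG(M)$ is defined via the existence of some decomposition (so $y \notin \GGG(M)$ rules out all decompositions, and in particular the one we fixed).
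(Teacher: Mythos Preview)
Your proof is correct and follows essentially the same approach as the paper: both use the upper Gibbs bound (Lemma~\ref{lem:upGibbs}) to reduce to controlling $\Lambda_n(\GGG(M)^c,\ph)$, and then exploit the decomposition $\LLL=\CCC^p\GGG\CCC^s$ together with Condition~\eqref{cond:PR} to make this small. The only cosmetic difference is that the paper packages your partition-sum estimate as an application of Proposition~\ref{prop:Gngeq} (with $\DDD=\LLL$), whereas you have inlined that argument directly.
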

\begin{proof}
By Proposition~\ref{prop:Gngeq}, there exists $M$ such that for all $n$,
\[
\Lambda_n(\GGG(M)^c,\ph) \leq (\delta_1 C_3^{-1} C_2^{-1}) \Lambda_n(\LLL,\ph).
\]
Together with Proposition~\ref{prop:Ln} and Lemma~\ref{lem:upGibbs}, this gives
\[
\mu(\GGG(M)^c) \leq C_3 \Lambda_n(\GGG(M)^c,\ph) e^{-nP(\ph)} \leq (\delta_1 C_3^{-1} C_2^{-1}) C_3 C_2 = \delta_1,
\]
and the result follows.
\end{proof}
\subsection{Ergodicity of $\mu$}
\begin{proposition}\label{prop:partmix}
If two measurable sets $P,Q\subset X$ both have positive $\mu$-measure, then $\ulim_{n\to\infty} \mu(P \cap \sigma^{-n}Q) > 0$.
\end{proposition}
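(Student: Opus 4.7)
The plan is to establish a quasi-independence lower bound at the level of cylinders in $\GGG(M)$, and then to deduce the general statement by approximating $P$ and $Q$ by unions of such cylinders. Concretely, I would first show that there exist $M \in \NN$ and $c_M > 0$ such that for every $n \in \NN$ and every $w, w' \in \GGG(M)_n$, setting $N := n + t_M$ with $t_M$ the specification gap for $\GGG(M)$,
\[
\mu([w] \cap \sigma^{-N}[w']) \;\geq\; c_M\, \mu([w])\, \mu([w']).
\]

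This is a four-word variant of the argument for Proposition~\ref{prop:Gibbs}: apply \F-specification to the tuple $(v^1, w, w', v^2)$ with $v^i \in \GGG(M)_{\ell_i}$ to obtain an injective map $(v^1, v^2) \mapsto v^1 u^1 w u^2 w' u^3 v^2 \in \LLL_m$ with $|u^i| = t_M$ and $m = \ell_1 + \ell_2 + 2n + 3t_M$. Bounding the Birkhoff sum along the concatenation from below via the Bowen property on $\GGG$ (absorbing the $\GGG(M)$-boundary correction $4M\|\ph\|$ as in Lemma~\ref{lem:Gnleq}), combined with the lower bound on $\Lambda_{\ell_i}(\GGG(M),\ph)$ (Propositions~\ref{prop:Gngeq}, \ref{prop:Ln}) and the upper bound on $\Lambda_m(\LLL,\ph)$ (Proposition~\ref{prop:Ln}), yields
\[
\nu_m\bigl(\sigma^{-(\ell_1+t_M)}([w]\cap\sigma^{-N}[w'])\bigr) \;\geq\; C'\, e^{-2nP(\ph)}\, e^{\ph_n(w) + \ph_n(w')}
\]
for a constant $C'(M) > 0$ independent of $n, \ell_1, m$. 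Averaging over the shift, passing to the weak* limit (all sets involved are clopen), and substituting the upper Gibbs bound of Lemma~\ref{lem:upGibbs} to rewrite $e^{-nP(\ph)+\ph_n(w)}$ and $e^{-nP(\ph)+\ph_n(w')}$ in terms of $\mu([w])$ and $\mu([w'])$ then gives the quasi-independence inequality.

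For the main statement, fix $P, Q$ with $\mu(P), \mu(Q) > 0$ and enlarge $M$ so that it additionally satisfies Lemma~\ref{lem:mostinGM} with $\delta_1 := \tfrac14 \min(\mu(P),\mu(Q))$. Since cylinders generate the Borel $\sigma$-algebra, for any $\eps > 0$ there is $n$ large and $\DDD^P_n, \DDD^Q_n \subset \LLL_n$ whose cylinder unions $\tilde P_n, \tilde Q_n$ satisfy $\mu(P \triangle \tilde P_n), \mu(Q \triangle \tilde Q_n) < \eps$. Lemma~\ref{lem:mostinGM} then yields $\mu(\bigcup_{w \in \DDD^P_n \cap \GGG(M)}[w]) \geq \tfrac12 \mu(P)$ and the analogous bound for $Q$. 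Summing the cylinder inequality over the (pairwise disjoint) pairs $(\DDD^P_n \cap \GGG(M)) \times (\DDD^Q_n \cap \GGG(M))$ and using $\sigma$-invariance to control the approximation errors, I would obtain
\[
\mu(P \cap \sigma^{-N} Q) \;\geq\; \tfrac14 c_M \mu(P)\mu(Q) - 2\eps,
\]
with $N = n + t_M$. Choosing $\eps$ small makes the right side strictly positive, and since $N \to \infty$ as $n \to \infty$, this establishes $\ulim_{N\to\infty} \mu(P \cap \sigma^{-N} Q) > 0$.

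The main obstacle I foresee is ensuring that the specification gluing and the $\GGG(M)$-boundary corrections (depending on $M$ but not on $n$) contribute only to a multiplicative constant $c_M$, rather than a quantity that degrades with $n$. This is the same technical issue resolved in Lemma~\ref{lem:Gnleq} and Proposition~\ref{prop:Gibbs}, handled via the uniform bound $V_n(\GGG(M), S_n\ph) \leq 4M\|\ph\| + \sup_k V_k(\GGG, S_k\ph) < \infty$.
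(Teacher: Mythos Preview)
Your proposal is correct and follows essentially the same strategy as the paper: establish a quasi-independence lower bound on $\GGG(M)$-cylinders via specification gluing combined with the partition-sum and Gibbs estimates, then pass to general sets by cylinder approximation and Lemma~\ref{lem:mostinGM}. The one difference is that the paper's Lemma~\ref{lem:GMcylmix} inserts an additional spacer word $w^2\in\GGG(M)_{\ell_2}$ between $u$ and $v$ in the gluing, which yields $\llim_{n}\mu([u]\cap\sigma^{-n}[v])\geq E_M\mu(u)\mu(v)$ for \emph{all} large $n$ and fixed cylinders $u,v$; you omit the spacer, obtain the bound only at the single time $N=n+t_M$, and compensate by letting the approximation level $n$ run to infinity --- a minor implementation variant rather than a different route.
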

\begin{proof}
We start by considering the case where $P$ and $Q$ are cylinders corresponding to words in $\GGG(M)$.
\begin{lemma}\label{lem:GMcylmix}
For all sufficiently large $M$, there exists $E_M > 0$ such that if $u,v\in \GGG(M)$, then $\llim_{n\to\infty} \mu([u] \cap \sigma^{-n}[v]) \geq E_M \mu(u) \mu(v)$.
\end{lemma}
\begin{proof}
As in the proof of Proposition~\ref{prop:Gibbs}, we take $M$ and $C=C(M)$ such that $\Lambda_n(\GGG(M),\ph) \geq Ce^{nP(\ph)}$ for all $n$.  Now fix $u,v\in \GGG(M)$, let $m\in \NN$ be large and fix $k\leq m$.  We estimate
\[
(\nu_m \circ \sigma^{-k})([u] \cap \sigma^{-n}[v]) = \nu_m(\sigma^{-k}[u] \cap \sigma^{-(n+k)}[v]).
\]
Write $\ell_1 = k-t_M$, $\ell_2 = n-|u|-2t_M$, and $\ell_3 = m-n-k-|v|-t_M$ and notice that $\ell_1+\ell_2+\ell_3-m = -(4t_M + |u|+|v|)$. Using Condition \eqref{cond:spec}, for every $(w^1, w^2, w^3) \in \GGG_{\ell_1} \times \GGG_{\ell_2} \times \GGG_{\ell_3}$ 
there exist $x^1,x^2,x^3,x^4\in \LLL_{t_M}$ such that
\[
w^1 x^1 u x^2 w^2 x^3 v x^4 w^3 \in \LLL_m.
\]
As in Proposition~\ref{prop:Gibbs}, it follows that
\begin{align*}
(\nu_m &\circ \sigma^{-k})([u] \cap \sigma^{-n}[v]) \\
&\geq \frac{\Lambda_{\ell_1}(\GGG(M),\ph) e^{\ph_{|u|}(u)} \Lambda_{\ell_2}(\GGG(M),\ph)
e^{\ph_{|v|}(v)} \Lambda_{\ell_3}(\GGG(M),\ph) e^{-(3V + 4t_M \|\ph\|)}}
{\Lambda_m(\LLL,\ph)} \\
&\geq \frac{C^3 e^{(\ell_1+\ell_2+\ell_3)P(\ph)} e^{\ph_{|u|}(u)} e^{\ph_{|v|}(v)}e^{-(3V + 4t_M \|\ph\|)}}
{C_2 e^{mP(\ph)}} \\
&\geq C^3 C_2^{-1} e^{-|u|P(\ph) + \ph_{|u|}(u)} e^{-|v|P(\ph) + \ph_{|v|}(v)} e^{-4t_M P(\ph)} e^{-(3V + 4t_M \|\ph\|)} \\
&\geq C^3 C_2^{-1} e^{-4t_M P(\ph)} e^{-(3V + 4t_M \|\ph\|)} \mu(u) \mu(v),
\end{align*}
where the last inequality follows from Lemma~\ref{lem:upGibbs}.  This holds for all $k$ and $m$, whence we have the desired result.
\end{proof}
Given $P\subset \GGG(M)_m$, let $[P] = \bigcup_{w\in P}[w]$.  Then for $P,Q\subset \GGG(M)_m$, we have
\[
\mu([P] \cap \sigma^{-n}[Q]) = \sum_{w\in P, w'\in Q} \mu([w] \cap \sigma^{-n}[w']),
\]
and using Lemma~\ref{lem:GMcylmix}, we see that
\begin{equation}\label{eqn:partmix}
\llim_{n\to\infty} \mu([P] \cap \sigma^{-n}[Q]) \geq E_M \mu(P)\mu(Q).
\end{equation}
To complete the proof of Proposition~\ref{prop:partmix}, we let $P,Q\subset X$ be any measurable sets with positive $\mu$-measure, and take $0 < \delta_1 < \mu(P) \wedge \mu(Q)$.  Let $M$ be as in Lemma~\ref{lem:mostinGM}.

Fix $\eps>0$ and choose sets $U,V$ that are unions of cylinders of the same length, say $m$, and for which $\mu(U\symdiff P) < \eps$ and $\mu(V \symdiff Q) < \eps$.  Let $U' = U \cap [\GGG(M)_m]$ and $V' = V\cap [\GGG(M)_m]$; then by Lemma~\ref{lem:mostinGM}, we have $\mu(U') > \mu(U) - \delta_1$ and $\mu(V') > \mu(V) - \delta_1$.  Furthermore, using~\eqref{eqn:partmix}, we see that
\[
\llim_{n\to\infty} \mu(U' \cap \sigma^{-n}(V')) \geq E_M \mu(U') \mu(V'),
\]
whence
\begin{equation}\label{eqn:partmix2}
\llim_{n\to\infty} \mu(U \cap \sigma^{-n}(V)) \geq E_M (\mu(U)-\delta_1)(\mu(V)-\delta_1).
\end{equation}
Finally, we observe that
\begin{align*}
|\mu(U\cap \sigma^{-n}(V)) - \mu(P\cap \sigma^{-n}(Q))| &\leq
\mu((U\cap \sigma^{-n}(V)) \symdiff (P\cap \sigma^{-n}(Q))) \\
&\leq \mu((U\symdiff P) \cap \sigma^{-n}(V \symdiff Q)) < \eps
\end{align*}
for every $n$, which together with~\eqref{eqn:partmix2} implies
\[
\llim_{n\to\infty} \mu(P \cap \sigma^{-n}(Q)) \geq E_M(\mu(P) - \delta_1)(\mu(Q)-\delta_1) - \eps.
\]
Since $\eps>0$ was arbitrary, this completes the proof.
\end{proof}
\subsection{Uniqueness of $\mu$}

Let $\mu$ be the ergodic equilibrium state constructed in the previous sections, and suppose that some ergodic measure $\nu\perp \mu$ is also an equilibrium state.  Let $\DDD$ be a collection of words such that $\mu(\DDD_n)\to 0$ and $\nu(\DDD_n) \to 1$. 
Applying Proposition~\ref{prop:Gngeq2}, we may assume that $M$ and $C_1>0$ are such that
\[
\Lambda_n(\DDD \cap \GGG(M),\ph) \geq C_1 e^{nP(\ph)}
\]
for every $n$.  Using the Gibbs property from Proposition~\ref{prop:Gibbs}, we have
\begin{align*}
\mu(\DDD_n) &\geq \mu(\DDD_n \cap \GGG(M)) 
\geq \sum_{w\in \DDD_n \cap \GGG(M)} K_M e^{-nP(\ph) + \ph_n(w)} \\
&= K_M e^{-nP(\ph)} \Lambda_n(\DDD \cap \GGG(M),\ph) \geq K_M C_1 > 0,
\end{align*}
which contradicts the fact that $\mu(\DDD_n) \to 0$.  This contradiction implies that every equilibrium state $\nu$ for $\ph$ is absolutely continuous with respect to $\mu$, and since $\mu$ is ergodic, this in turn implies that $\nu=\mu$, which completes the proof of Theorem~\ref{thm:main}.

\subsection{Characterisation of $\mu$}
We claim that if  $\GGG(M)$ has \Fp-specification for all $M$, then $\mu$ can be characterised by the equation \eqref{permeas}. The argument is similar to the one described in more detail in \S5.6 of \cite{CT}. The key point is that $\sum_{x\in \Per_{n+\tau}} e^{S_n\ph(x)}$ can be controlled from below by $ \Lambda_n(\GGG(M), \ph)$, where $\tau$ is the gap size in the specification property of $\GGG(M)$. This can be used to show that any limit measure of the sequence of measures in \eqref{permeas} is an equilibrium measure for $\ph$. By uniqueness of the equilibrium measure, this establishes \eqref{permeas}.

\section{Proofs of results from Sections \ref{symb-examp} and \ref{chap:non-symbolic}}\label{proof2}
\subsection{Proof of Proposition \ref{prop:Holder}}
Let $\Sigma \subset \Sigma_\beta$ be an SFT containing $0$.  
This is possible by fixing $N\in \NN$ and taking the shift space defined by all the paths on the graph presentation of $\Sigma_\beta$ which never leave the first $N$ vertices of the graph.  Then there is a unique equilibrium state $\mu'$ for $\ph|_{\Sigma}$, and $\mu'$ is fully supported on $\Sigma$.  In particular, $\mu'$ is non-atomic, and so
\[
P(\Sigma_\beta, \ph) \geq P(\Sigma, \ph) = h_{\mu'}(f) + \int \ph\,d\mu' > \ph(0).
\]
Let $V = \max \{ \sup_n V_n(\LLL,S_n\ph), \|\ph\|_\infty \}$, and fix $\delta>0$ such that
\begin{equation}\label{eqn:gap}
\ph(0) + 8\delta V < P(\Sigma_\beta, \ph).
\end{equation}
Fix $n\geq 1$ and let $w=\wb_1 \cdots \wb_n$ be the unique word in $\CCC^s_n$.  Let $a_n$ be the number of non-zero entries in $w$; that is, $a_n = \# \{ i \in \{1,\dots n\} \mid w_i \neq 0 \}$.  For every $0\leq k \leq a_n$, let $\AAA_k$ be the set of words obtained by changing precisely $k$ of those entries to $0$.  Observe that $\AAA_k \subset \LLL_n$ (this follows from the characterisation of $\Sigma_\beta$ in terms of the lexicographic ordering).  Furthermore, for each $v\in \AAA_k$, we have
\begin{align*}
w &= w^1 x_1 w^2 x_2 \cdots x_k w^{k+1}, \\
v &= w^1 0 w^2 0 \cdots 0 w^{k+1},
\end{align*}
where $w^i \in \LLL$ and $x_i \in A$.  (Recall that $A$ is the alphabet of the shift.)  Writing $\Phi(u) = \ph_{|u|}(u)=\sup_{x\in[u]} S_{|u|}\ph(x)$ for $u\in \LLL$, we have
\begin{align*}
|\Phi(w) - (\Phi(w^1) + \Phi(x_1) + \Phi(w^2) + \Phi(x_2) + \cdots + \Phi(w^k))| &\leq (2k+1)V, \\
|\Phi(v) - (\Phi(w^1) + \Phi(0) + \Phi(w^2) + \Phi(0) + \cdots + \Phi(w^k))| &\leq (2k+1)V,
\end{align*}
and so
\[
|\Phi(v) - \Phi(w)| \leq (4k+2)V + \sum_{i=1}^k |\Phi(x_i) - \Phi(0)| \leq (5k+2)V.
\]
We use the bound $|\Phi(v) - \Phi(w)| \leq 7kV$.  There are ${a_n \choose k}$ distinct words in $\AAA_k$, and so
\[
\sum_{v\in \AAA_k} e^{\Phi(v)} \geq {a_n \choose k} e^{\Phi(w) - 7kV}.
\]
Summing over all $k$ gives
\[
\Lambda_n(\LLL,\ph) \geq \sum_{k=0}^{a_n} {a_n \choose k} e^{\Phi(w)} e^{-7kV} = e^{\Phi(w)} (1+ e^{-7V})^{a_n}.
\]
In particular, if $a_n \geq \delta n$, then we have
\begin{equation}\label{eqn:gap2}
\frac 1n \log \Lambda_n(\LLL,\ph) \geq \frac 1n \ph_n(w) + \delta\log(1+e^{-7V}).
\end{equation}
On the other hand, if $a_n < \delta n$, then we can use a similar argument to compare $\ph_n(w)$ and $n\ph(0)$, obtaining
\[
\ph_n(w) \leq n\ph(0) + 7\delta n V.
\]
Using~\eqref{eqn:gap}, this gives
\[
\frac 1n \ph_n(w) \leq \ph(0) + 7\delta V < P(\Sigma_\beta, \ph) - \delta V,
\]
which together with~\eqref{eqn:gap2} shows that
\[
\ulim_{n\to\infty} \frac 1n S_n \ph(w^\beta) \leq P(\Sigma_\beta, \ph) - \delta \min \{ V, \log (1+e^{-7V})\} < P(\Sigma_\beta, \ph),
\]
establishing~\eqref{eqn:betaPR2} and hence~\eqref{eqn:betaPR}.
\subsection{Proof of Proposition~\ref{prop:Bowenornot} and Corollary~\ref{cor:Bowenornot}}
\begin{proof}[of  Proposition \ref{prop:Bowenornot}]
It suffices to restrict our attention to the grid functions $\phg$.  Fix $w\in \LLL(X)_N$ and let
\[
m = \min \{ k \mid \sigma^k(w) \in \LLL(Y) \}.
\]
Then for all $0 \leq k < m$, we have $\sigma^k(w) \in \LLL(X) \setminus \LLL(Y)$, whence $\phg$ is constant on $[\sigma^k(w)]$.

Now suppose that $V$ exists such that the bound in the hypothesis holds.  We see that for every $x,y\in [w]$ we have 
\[
|S_N\phg(x)- S_N\phg(y)| = |S_{N-m} \phg(\sigma^m(x)) - S_{N-m} \phg(\sigma^m(y))| < 2V.
\] 
For the converse, fix $V\in \RR$ and take $w\in \LLL(Y)_N$ and $x\in [w]$ such that $|S_N \phg(x)| > V$.  Take $y\in Y\cap [w]$, and note that $\phg(\sigma^k(y)) = 0$ for all $k\geq 0$.  Consequently, we have
\[
V_N(\LLL(X),S_N\phg) \geq |S_N \phg(x) - S_N\phg(y)| > V.
\]
Since $V$ was arbitrary, this shows that $\phg\notin \WWW(\LLL(X))$.
\end{proof}
\begin{proof}[of  Corollary \ref{cor:Bowenornot}]
In this case, $\FFF(X,Y) = \{ 0^n z \mid n\geq 0, z\in A\setminus \{0\}\}$.  If $x\in [0^nz]$, then $\sigma(x) \in [0^{n-1}z]$, so for $x\in [0^Nz]$ we have $S_N\phg(x) = \sum_{n=1}^N a_{0^nz}$. If $V=\max \{ |\sum_{n\geq 0} a_{0^n z}| \mid z\in A\setminus \{0\} \}< \infty$, it follows from Proposition~\ref{prop:Bowenornot} that $\phg$ has the Bowen property.  Conversely, if $V=\infty$, it is clear that $S_N\phg(x)$ can be arbitrarily large.
\end{proof}
\subsection{Proof of Theorem \ref{lem:pospressenough}} \label{posspresproof} 
Let $\GGG$ and $\CCC^s$ be as in the discussion following the statement of Theorem~\ref{lem:pospressenough}.  We show that $P(\CCC^s,\ph) < P(\ph)$ in order to apply Theorem~\ref{thm:main}.  Because $\#\CCC^s_n$ grows subexponentially, it will suffice to show that there exists $\delta'>0$ such that
\begin{equation}\label{eqn:sumless}
\frac 1n S_n\ph(x) \leq P(\ph) - \delta'
\end{equation}
for every sufficiently large $n$ and every $x\in [w]$ for $w\in \CCC^s_n$.  
The proof of~\eqref{eqn:sumless} is similar to the proof of Proposition~\ref{prop:Holder}, but greater care must be taken since $\ph$ does not have the Bowen property.  We write $\ph = \phb + \phg$, where the following properties hold:
\begin{enumerate}
\item there exists $V>0$ such that $V_n(\LLL,S_n\phb) \leq V$ for all $n$;
\item $\phg = \sum_{\ell\geq 0} a_\ell \one_{[0^\ell A^+]}$ for some sequence $a_\ell$ tending monotonically to $0$, where $A^+ = A\setminus \{0\}$.
\end{enumerate}
Write $s_\ell = \sum_{j=0}^\ell |a_j|$; then $s_\ell = |S_{\ell+1} \phg(x)|$ for all $x\in [0^\ell A^+]$.  Observe that because $a_j$ converges monotonically to $0$, so does the sequence $\frac 1\ell s_\ell$.  In particular, using the assumption that $\ph(0) < P(\ph)$, there exists $L$ such that
\begin{equation}\label{eqn:Lbig}
\ph(0) + \frac 1L s_L < P(\ph).
\end{equation}
Fix $\delta>0$ such that
\begin{equation}\label{eqn:deltasmall}
\ph(0) + \frac 1L s_L + 8\delta V + \delta s_L < P(\ph),
\end{equation}
and fix $\gamma>0$ such that
\begin{equation}\label{eqn:gammasmall}
\delta' := -\frac \delta4\gamma \log \gamma - \gamma(7V + s_L) > 0.
\end{equation}
Fix $n$ large enough such that $\frac {3C}n \log n \leq \delta'$, where $C>0$ is a constant which, from Stirling's approximation formula, satisfies
\begin{equation}\label{eqn:stirling0}
|\log m! - (m\log m - m)| \leq C\log m
\end{equation}
for all $m$. 
We fix $w\in \LLL_n$ and show that~\eqref{eqn:sumless} holds for every $x\in [w]$.  Write 
\[
w = 0^{\ell_0} x_1 0^{\ell_1} x_2 \cdots x_k 0^{\ell_k}
\]
for some $x_i \in A^+$ and $\ell_i \geq 0$; then 
for every $x\in [w]$, 
\begin{equation}\label{eqn:Snxinw}
S_n \phg(x) = \pm(s_{\ell_0} + s_{\ell_1} + \cdots + s_{\ell_{k-1}}) + S_{\ell_k} \phg(0^{\ell_k} \cdot \sigma^n(x)),
\end{equation}
where the choice of sign depends only on whether the sequence $a_n$ takes positive or negative values.  
Consider the set of indices
\[
Q = \{ 1\leq i < k \mid \ell_i < L \}.
\]
\begin{lemma}\label{lem:lotsofshorts}
If $\#Q \leq \delta n$, then~\eqref{eqn:sumless} holds for every $x\in [w]$.
\end{lemma}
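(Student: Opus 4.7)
The plan is to decompose $\ph = \phb + \phg$ and bound $S_n\phb(x)$ and $S_n\phg(x)$ separately, combining the estimates with \eqref{eqn:deltasmall} to derive \eqref{eqn:sumless}.

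First I would bound $|S_n\phg(x)|$. By \eqref{eqn:Snxinw}, this sum is (up to a uniformly controlled tail from the terminal zero block) bounded by $\sum_{i=0}^k s_{\ell_i}$. I split the index set $\{0,\ldots,k\}$ into short-gap indices (those in $Q$, together with $i \in \{0,k\}$ if $\ell_0$ or $\ell_k < L$) and long-gap indices ($\ell_i \geq L$). The hypothesis $\#Q \leq \delta n$ bounds the short-gap count by $\delta n + O(1)$; each contributes at most $s_L$ by monotonicity of $s_\ell$, for a total of at most $(\delta n + O(1)) s_L$. For long gaps, the monotonicity of $s_\ell/\ell$ (a consequence of the monotone convergence $a_j \to 0$) gives $s_{\ell_i} \leq (s_L/L) \ell_i$, and summing yields at most $n s_L / L$.

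Next I would bound $S_n\phb(x)$ using the Bowen property of $\phb$. Writing $w = 0^{\ell_0} x_1 0^{\ell_1} \cdots x_k 0^{\ell_k}$ and decomposing the Birkhoff sum along the zero blocks, the bound $V_{\ell_i}(\LLL, S_{\ell_i}\phb) \leq V$ gives $\sup_{y \in [0^{\ell_i}]} S_{\ell_i}\phb(y) \leq \ell_i \phb(0) + V$, while each non-zero position contributes at most $\|\phb\|_\infty$. Summing these gives $S_n\phb(x) \leq n \phb(0) + C k + O(V)$ for an absolute constant $C = O(V)$, and since $\phg$ vanishes at the fixed point $0^\infty$ we have $\phb(0) = \ph(0)$. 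To control $k/n$, the hypothesis $\#Q \leq \delta n$ forces at least $(k-1) - \delta n$ inner gaps to have length $\geq L$, so $n \geq k + L((k-1) - \delta n)$, giving $k/n \leq \delta + (1-\delta)/(L+1) + O(1/n)$.

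Combining everything and dividing by $n$,
\[
\tfrac{1}{n} S_n\ph(x) \leq \ph(0) + C\bigl(\delta + \tfrac{1}{L+1}\bigr) + \delta s_L + \tfrac{s_L}{L} + O\bigl(\tfrac{1}{n}\bigr),
\]
and \eqref{eqn:deltasmall}, rearranged as $P(\ph) - \ph(0) - s_L/L - \delta s_L > 8 \delta V$, converts this into $\tfrac{1}{n} S_n \ph(x) \leq P(\ph) - \delta'$ for $n$ sufficiently large, provided $L$ is large enough that the error term $C/(L+1)$ fits within the available slack.

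The main obstacle is the careful calibration of constants so that the $C/(L+1)$ correction arising from the bound on $k$ is absorbed by the $8\delta V$ margin afforded by \eqref{eqn:deltasmall}; since \eqref{eqn:Lbig} only requires $L$ to be sufficiently large, we may enlarge $L$ freely to ensure this. A minor technical point is the uniform control of the tail term $S_{\ell_k}\phg(0^{\ell_k}\sigma^n(x))$ appearing in \eqref{eqn:Snxinw}, which depends on the continuation of $x$ beyond $w$; the same monotonicity $s_\ell/\ell \leq s_L/L$ for $\ell \geq L$ delivers the required uniform bound and is absorbed into our estimates.
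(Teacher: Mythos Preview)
Your proof is correct and follows the same strategy as the paper: decompose $\ph = \phb + \phg$, bound $S_n\phg$ by splitting the zero-blocks into short and long gaps, bound $S_n\phb$ via the Bowen property by comparison with $n\ph(0)$, and conclude via \eqref{eqn:deltasmall}. In one respect you are more careful than the paper. The paper asserts $S_n\phb(x) \leq n\ph(0) + 7\delta n V$ directly by invoking the argument of Proposition~\ref{prop:Holder}, an inequality which tacitly uses that the number $k$ of non-zero symbols in $w$ satisfies $k \leq \delta n$; however the hypothesis $\#Q \leq \delta n$ bounds only the number of \emph{short gaps}, not $k$ itself. You correctly derive $k/n \leq \delta + O(1/L)$ from $\#Q\leq\delta n$ and then absorb the residual $C/(L+1)$ by enlarging $L$. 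This enlargement is legitimate: since $s_L/L \to 0$ we have $s_L = o(L)$, so for large $L$ the maximal $\delta$ permitted by \eqref{eqn:deltasmall}, which is of order $(8V+s_L)^{-1}$, still dominates $1/(L+1)$, leaving room for a positive $\delta'$.
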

\begin{proof}
By the argument in the proof of Proposition~\ref{prop:Holder}, we have
\begin{equation}\label{eqn:Snphir}
S_n\phb(x) \leq n\ph(0) + 7\delta nV,
\end{equation}
so we must estimate $S_n\phg(x)$.  We see from~\eqref{eqn:Snxinw} that
\[
S_n\phg(x) \leq s_{\ell_0} + s_{\ell_1} + \cdots + s_{\ell_k} 
\leq \delta n s_L + \sum_{i=0}^k \frac {\ell_i}Ls_L 
\leq \delta n s_L + n\frac 1L s_L,
\]
where the second inequality uses the fact that if $\ell_i \geq L$, then $\frac 1{\ell_i} s_{\ell_i} \leq \frac 1 L s_L$.  Together with~\eqref{eqn:Snphir} and~\eqref{eqn:deltasmall}, this completes the proof of the lemma.
\end{proof}
Thanks to Lemma~\ref{lem:lotsofshorts}, it only remains to consider the case where $w\in\LLL_n$ is such that $\#Q \geq \delta n$.  Writing $Q = \{i_1, i_2, \dots, i_{\#Q}\}$, where $i_1 < i_2 < \cdots$, consider the subset $Q' = \{i_1, i_3, i_5, \dots\} \subset Q$.  Then we have $\# Q' \geq \frac 12 \delta n$ and $|j-i| \geq 2$ for all $i,j\in Q'$.

Given $P\subset Q'$, let $w'$ be the word obtained from $w$ by changing $x_i$ to $0$ whenever $i\in P$.  It follows from the characterisation  \eqref{lexbeta} of $\Sigma_\beta$ that $w'\in \LLL$.  

Given $x\in [w]$ and $y\in [w']$ we have as in Proposition~\ref{prop:Holder} that
\begin{equation}\label{eqn:Snphir2}
|S_n\phb(x) - S_n\phb(y)| \leq 7(\#P) V.
\end{equation}
Changing $x_i$ to $0$ results in  a block of $0$s of length $\ell_{i-1} + \ell_i + 1$ (using the fact that $P$ does not contain two consecutive integers).  Using~\eqref{eqn:Snxinw}, this gives us the following estimate for $\phg$:
\begin{equation}\label{eqn:sell}
|S_n\phg(x) - S_n\phg(y)| \leq \sum_{i\in P} |s_{\ell_{i-1} + \ell_i + 1} - s_{\ell_{i-1}} - s_{\ell_i}|.
\end{equation}
Furthermore, we have
\[
|s_{\ell_{i-1} + \ell_i + 1} - s_{\ell_{i-1}} - s_{\ell_i}| = 
\sum_{j=1}^{\ell_i+1} \left (|a_j| - |a_{\ell_{i-1} + j}| \right) \leq \sum_{j=1}^{\ell_i+1} |a_j| = s_{\ell_i+1},
\]
whence~\eqref{eqn:sell} yields
\[
|S_n\phg(x) - S_n\phg(y)| \leq \sum_{i\in P} s_{\ell_i+1} \leq (\#P) s_L.
\]
Together with~\eqref{eqn:Snphir2} and the observation that there are ${\#Q' \choose m}$ ways to choose $P$ with $\#P = m$ for $0 \leq m \leq \#Q'$, we see that
\begin{equation}\label{eqn:logL1}
\Lambda_n(\LLL,\ph) \geq {\#Q' \choose m} e^{S_n\ph(x) - m(7V + s_L)}.
\end{equation}
An elementary calculation shows that for all $m$ and $N$, \eqref{eqn:stirling0} yields
\begin{equation}\label{eqn:stirling}
\log {N\choose m} \geq N \left(-\frac m N \log \frac mN \right) - 3C\log N.
\end{equation}
Taking logarithms in~\eqref{eqn:logL1} and applying~\eqref{eqn:stirling} yields
\[
\log\Lambda_n(\LLL,\ph) \geq \#Q'\left(-\frac m{\#Q'} \log \frac m{\#Q'}\right) - 3C\log \#Q' + S_n\ph(x) - m(7V+s_L)
\]
for every $0 \leq m \leq \#Q'$.  Using the inequality $\frac 12\delta n \leq \#Q' \leq n$ and choosing $m$ such that $\gamma \#Q' \leq m \leq 2\gamma \#Q'$, we obtain
\begin{equation}\label{eqn:logL3}
\log\Lambda_n(\LLL,\ph) \geq \frac 12\delta n (-\gamma \log \gamma) - 3C \log n + S_n\ph(x) - 2\gamma n(7V+s_L).
\end{equation}
In particular, applying~\eqref{eqn:gammasmall} gives
\begin{align*}
\frac 1n\log \Lambda_n(\LLL,\ph) &\geq \frac 1n S_n\ph(x) + \frac 12 \delta (-\gamma\log \gamma) - 2\gamma (7V+s_L) - \frac {3C}n\log n \\
&= \frac 1n S_n\ph(x) + 2\delta' - \frac{3C}n\log n.
\end{align*}
Thus all sufficiently large values of $n$ satisfy
\[
\frac 1n\log \Lambda_n(\LLL,\ph) \geq \frac 1n S_n\ph(x) + \delta',
\]
and~\eqref{eqn:sumless} follows.
\subsection{Proofs of lemmas used in Theorem~\ref{thm:MP}}
\label{sec:lemmasinMP}

For any $\gamma > 0$, the behaviour of the map $f$ near the fixed point $0$ is described by the discussion in~\cite[\S 6]{lsY99}. The following lemma, which is proved by an elegant elementary argument is crucial for our analysis.
\begin{lemma}[({\cite[Lemma 5]{lsY99}})] \label{Young}
There exists $L>0$ such that for all $i, n$ with $0 \leq i \leq n$ and for all $x,y \in [x_{n+1}, x_n]$, we have
\[
| \log (f^i)'(x) - \log (f^i)'y | \leq L \frac{| f^ix - f^iy|}{|x_{n-i}-x_{n-i+1}|} \leq L.
\]
\end{lemma}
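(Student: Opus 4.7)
The plan is to reduce the estimate to a telescoping sum via the chain rule, bound each term by the mean value theorem, and then close up the argument by combining a bounded-distortion property with the standard asymptotics of the orbit $(x_n)$.

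First I would write
\[
\log (f^i)'(x) - \log (f^i)'(y) = \sum_{k=0}^{i-1} \bigl( \log f'(f^k x) - \log f'(f^k y) \bigr).
\]
By construction of the $x_n$, if $x,y \in [x_{n+1}, x_n]$ then $f^k x$ and $f^k y$ both lie in $[x_{n-k+1}, x_{n-k}]$, so the mean value theorem gives
\[
|\log f'(f^k x) - \log f'(f^k y)| \leq M_k \, |f^k x - f^k y|, \quad M_k := \sup_{z \in [x_{n-k+1}, x_{n-k}]} |f''(z)/f'(z)|.
\]
For $f(x) = x + \gamma x^{1+\eps}$ a direct computation gives $f''(z)/f'(z) \asymp z^{\eps-1}$ near $0$, so $M_k$ is controlled by $x_{n-k}^{\eps-1}$.

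The central (and hardest) step is to establish a bounded-distortion estimate
\[
\frac{|f^k x - f^k y|}{|x_{n-k} - x_{n-k+1}|} \leq C \, \frac{|f^i x - f^i y|}{|x_{n-i} - x_{n-i+1}|} \qquad (0 \leq k \leq i),
\]
with $C$ depending only on $\gamma$ and $\eps$. My approach would be to compare the two ratios under one application of $f$: the mean value theorem writes the forward-image of the numerator as $f'(\xi_k)$ for some $\xi_k$ between $f^k x$ and $f^k y$, and of the denominator as $f'(\eta_k)$ for some $\eta_k \in (x_{n-k+1}, x_{n-k})$. Since both points sit in the same interval $[x_{n-k+1}, x_{n-k}]$, monotonicity of $f'$ there (valid because $f'' > 0$) combined with another application of the mean value theorem yields $f'(\xi_k)/f'(\eta_k) = 1 + O\bigl(M_k \, |x_{n-k} - x_{n-k+1}|\bigr)$. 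The product of these multiplicative factors converges by the summability estimate below, producing a uniform $C < \infty$.

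Combining everything,
\[
\sum_{k=0}^{i-1} M_k |f^k x - f^k y| \leq C \, \frac{|f^i x - f^i y|}{|x_{n-i} - x_{n-i+1}|} \sum_{k=0}^{i-1} M_k \, |x_{n-k} - x_{n-k+1}|.
\]
From the recursion $x_{n-1} - x_n = \gamma x_n^{1+\eps}$ one derives $x_n \asymp n^{-1/\eps}$ and $|x_{n-k} - x_{n-k+1}| \asymp (n-k+1)^{-(1+\eps)/\eps}$, whence $M_k \, |x_{n-k} - x_{n-k+1}| \asymp (n-k+1)^{-2}$. The $k$-sum is therefore uniformly bounded, yielding the first inequality with a uniform constant $L$. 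The second inequality is immediate from $f^i x, f^i y \in [x_{n-i+1}, x_{n-i}]$, which forces $|f^i x - f^i y| \leq |x_{n-i} - x_{n-i+1}|$. The main obstacle is the bounded-distortion step, which is the heart of Young's argument in \cite[\S 6]{lsY99}; the other ingredients reduce to elementary asymptotic analysis of the fixed-point orbit.
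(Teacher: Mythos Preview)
The paper does not give its own proof of this lemma; it is quoted directly from Young~\cite[Lemma~5]{lsY99} and used as a black box. Your sketch is correct and is essentially Young's argument: the chain-rule telescoping, the one-step ratio comparison $f'(\xi_k)/f'(\eta_k)$ with both points in the same fundamental interval, and the summability $\sum_k M_k|x_{n-k}-x_{n-k+1}|\asymp\sum_j j^{-2}$ coming from $x_m\asymp m^{-1/\eps}$ are exactly the ingredients in~\cite[\S6]{lsY99}. There is nothing to compare against in the present paper.
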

In the remarks preceding that lemma, Young shows that there exists $E_1 > 0$ such that
\begin{equation}\label{eqn:deltaxn}
x_n - x_{n+1} \leq E_1 n^{-(1+1/\eps)}.
\end{equation}
Lemma~\ref{Young} and~\eqref{eqn:deltaxn} are the only two results we use from~\cite{lsY99}; in particular, we do not use any of the results that rely on building towers.
 \begin{proof*}[of Lemma \ref{lem:phbbowen}]
We start by estimating the variation on words of the form $0^n z$, where $z\in\{1,\dots,b-1\}$.  Suppose $x, y \in \pi(0^{n} z)$; then by Lemma \ref{Young},
\[
|S_n \ph (x) - S_n \ph(y)| \leq L_1 |f^nx-f^ny|,
\]
where $L_1 = L |x_0-x_1|^{-1}$.  Furthermore, since $f^n x, f^ny \in\pi(z)$, and $f$ is uniformly $C^2$ on each $\pi(z)$ with $z\neq 0$, there exists $L_2$ so that  
\begin{equation} \label{eqn:distort}
|S_{n+1} \ph (x) - S_{n+1} \ph(y)| \leq L_2 |f^{n+1}x-f^{n+1}y|.
\end{equation}
Note also that since $\phg$ is constant on $\pi(0^{n} z)$, we have
\[
|S_n \phb (x) - S_n \phb(y)| = |S_n \ph (x) - S_n \ph(y)|. 
\]
Now we consider more general words.  Take $\lambda >1$ such that $f$ expands distances by a factor of at least $\lambda$ on each interval $I_j$ with $j \neq 0$.  Thus given any word $w\in\LLL(\Sigma_\beta)$ with $k$ non-zero entries, we have
\begin{equation}\label{eqn:diam}
\diam(\pi(w)) \leq \lambda^{-k}.
\end{equation}
Fix $w \in \LLL(\Sigma_\beta)_N$ and write
\[
w=0^{\ell_1} z_1 0^{\ell_2} z_2 \cdots z_m 0^{\ell_{m+1}},
\]
where $\ell_j\geq 0$ and $z_j \in \{1,\dots,b-1\}$ for every $j$. Let $x,y \in \pi(w)$. 
Then, letting $k_1 =0$ and $k_j = (j-1) + \sum_{i=1}^{j-1} \ell_i$, and using \eqref{eqn:distort} and \eqref{eqn:diam}, we have
\begin{align*}
|S&_{N-\ell_{m+1}} \phb(x) - S_{N-\ell_{m+1}} \phb(y)| \leq \sum_{j=1}^m | S_{\ell_j +1} \ph (f^{k_j} x) - S_{\ell_j +1} \ph (f^{k_j} y) |\\ 
&\leq L_1 \sum_{j=1}^m |f^{k_j+1}x - f^{k_j+1} y|
\leq L_1 \sum_{j=1}^m  \diam (\pi(0^{\ell_{j+1}} z_{j+1} \cdots z_m 0^{\ell_{m+1}})) \\
&\leq L_1\sum_{j=1}^m  \lambda^{-(m-j)}  \leq L_1 (1-\lambda^{-1})^{-1} =: L_3.
\end{align*}
Now let $x' = f^{N-\ell_{m+1}}x$ and $y' = f^{N-\ell_{m+1}}y$. Then $x', y' \in \pi(0^{N-\ell_{m+1}})$. Suppose $x' \in \pi(0^{n_1}z_1)$ and $y' \in \pi(0^{n_2} z_2)$ for some $z_1, z_2 >0$. Then, applying Lemma \ref{Young},
\[
|S_{\ell_{m+1}} \phb (x')| = |  S_{\ell_{m+1}} \ph (x') - S_{\ell_{m+1}} \ph(x_{n_1-1})| \leq L,
\]
and similarly for $y'$. 
We conclude that
\[
|S_N \phb (x) - S_N \phb(y)| \leq L_3 + |S_{\ell_{m+1}} \phb (x')| +|S_{\ell_{m+1}} \phb (y')| \leq L_3 +2L,
\]
which shows that $\phb$ has the Bowen property. To see that $\ph$ itself is not Bowen, it suffices to show that 
\[
\sup\{|S_n \ph (x) - S_n \ph (y)| \mid x, y \in \pi(0^n), n \in \NN\} \leq \left| \sum \ph(x_n)\right| = \infty.
\]
This follows quickly from the observation that 
\[
\singlebox
\left\lvert\sum_{k=1}^n \ph(x_k)\right\rvert = |(f^n)'(x_n)| \approx x_n^{-1} \to \infty.
\esinglebox \qedhere
\]
\end{proof*}
\begin{proof}[ of Lemma \ref{lem:posexp}]
To prove Lemma~\ref{lem:posexp}, we use an argument communicated to us by Dmitry Dolgopyat.  Lemma~\ref{lem:WaltersonG} shows that $\ph \in \WWW(\GGG)$, and thus there exists $V>0$ such that $|S_n (\log f')(x) - S_n(\log f')(y)| \leq V$ whenever $x,y\in \pi(w)$ for some $w\in \GGG_n$.  Using properties of logs and exponentiating gives the following bounded distortion result, where we write $E_0 = e^V$:
\begin{equation}\label{eqn:bdddist}
\frac 1{E_0} \leq \frac{(f^n)'(x)}{(f^n)'(y)} \leq E_0.
\end{equation}

Given $x\in\Sigma_\beta$, let $\ell_j=\ell_j(x) \geq 0$ and $z_j \in A\setminus \{0\}$ be such that
\[
x=0^{\ell_1} z_1 0^{\ell_2} z_2 \cdots.
\]
Because $f$ is uniformly expanding away from $0$, it is easy to see that $\pi(x)\in I\setminus Z_0$ whenever $\ulim \frac 1n \ell_n(x) < \infty$.  Thus it suffices to show that there exists $L<\infty$ such that $\hat Z = \{ \pi(x) \mid \ulim \frac 1n \ell_n(x) > L \}$ has zero Lebesgue measure.  

To this end, consider the set
\[
\Omega(m_1,m_2,\dots,m_n) = \{ \pi(x) \mid \ell_j(x) = m_j \text{ for all } 1\leq j\leq n \}.
\]
It follows from~\eqref{eqn:deltaxn} that
\begin{equation}\label{eqn:LebOmega1}
\Leb\Omega(m) \leq E_1 m^{-(1+1/\eps)}
\end{equation}
for every $m$; we claim that there is a constant $E_2$ such that
\begin{equation}\label{eqn:LebOmega}
\Leb(\Omega(m_1,m_2,\dots,m_n)) \leq E_2^n (m_1 \cdots m_n)^{-(1+1/\eps)}.
\end{equation}
Using the bounded distortion estimate~\eqref{eqn:bdddist} and the observation that \[\Leb(f^{m_1 + \cdots + m_n + n}(\Omega(m_1,\dots,m_n)))\] is uniformly bounded below, we conclude that there are constants $E_3, E_4$ such that
\begin{align*}
\frac{\Leb \Omega(m_1,\dots, m_n,m_{n+1})}{\Leb\Omega(m_1,\dots,m_n)}
&\leq E_3\frac{\Leb f^{m_1 + \cdots + m_n + n}\Omega(m_1,\dots, m_n,m_{n+1})}{\Leb f^{m_1 + \cdots + m_n + n}\Omega(m_1,\dots,m_n)} \\
&\leq E_4 \Leb\Omega(m_{n+1}).
\end{align*}
Using~\eqref{eqn:LebOmega1} and iterating gives~\eqref{eqn:LebOmega}.  

Choose $N$  such that $\sum_{m=N}^\infty E_2 m^{-(1+1/\eps)} < 1$ and let $X_i$ be i.i.d.\ $\NN$-valued random variables such that $P(X_i = m) = E_2 m^{-(1+1/\eps)}$ for every $m>N$, and $P(X_i = N) = 1 - \sum_{m>N} E_2 m^{-(1+1/\eps)}$.  Then we have
\begin{align*}
\Leb \Omega(m_1,\dots,m_n) &\leq E_2^n \prod_{j=1}^n m_j^{-(1+1/\eps)} 
\leq \prod_{m_j > N} P(X_j = m_j) \\
&= P(X_j = m_j \text{ for all } j \text{ such that } m_j > N),
\end{align*}
and in particular, for $L>N$ we have
\begin{align*}
\Leb \Big\{ &\pi(x) \,\Big|\, \frac 1n \sum_{j=1}^n \ell_j(x) > L \Big \} \\
&\leq \sum_{m_1 + \cdots + m_n > nL} P(X_j = m_j \text{ for all } j \text{ such that } m_j > N) \\
&\leq P(X_1 + \cdots + X_n > n(L-N)).
\end{align*}
By the law of large numbers, this goes to $0$ as $n\to \infty$ provided we take
\[
L-N > \sum_{m\geq 1} m P(X_j = m) = NP(X_j = N) + \sum_{m > N} E_2 m^{-1/\eps},
\]
which we can do since the sum converges.  This completes the proof of Lemma~\ref{lem:posexp}.
\end{proof}

\begin{proof}[of Lemma \ref{lem:gpposent}]
We begin with the observation that
\[
\Lambda_n(\LLL,t\ph) = \sum_{w\in\LLL_n} \sup_{x\in \pi(w)} (f^n)'(x)^{-t}.
\]
As a consequence of Lemma~\ref{lem:posexp}, there exists a set $Z^+\subset I$, an integer $N\in\NN$, and a number $\lambda>1$ such that $(f^n)'(x) \geq \lambda^n$ for every $x\in Z^+$ and $n\geq N$, and moreover $\Leb Z^+ > 0$.  Let $\DDD = \{w\in \LLL \mid \pi(w) \cap Z^+ \neq \emptyset\}$, and for each $w\in \DDD$ fix a point $x_w\in \pi(w) \cap Z^+$.  Thus for every $n\geq N$, we have
\begin{align*}
\Lambda_n(\LLL,t\ph) &\geq \Lambda_n(\DDD,t\ph) \geq \sum_{w\in\DDD_n} (f^n)'(x_w)^{-t} e^{-V_n(\LLL,S_n(t\ph))} \\
&\geq \sum_{w\in\DDD_n} (f^n)'(x_w)^{-1} \lambda^{n(1-t)} e^{-V_n(\LLL,S_n(t\ph))}.
\end{align*}
Furthermore, we have
\[
\Leb(\pi(w)) \leq \Leb(f^n(\pi(w))) \left(\inf_{x\in\pi(w)} (f^n)'(x)\right)^{-1} 
\leq (f^n)'(x_w)^{-1} e^{V_n(\LLL,S_n\ph)},
\]
and so
\begin{align*}
\Lambda_n(\LLL,t\ph) &\geq \sum_{w\in\DDD_n} \Leb(\pi(w)) \lambda^{n(1-t)} e^{-(1+|t|)V_n(\LLL,S_n\ph)} \\
&\geq \Leb(Z^+) \lambda^{n(1-t)} e^{-(1+|t|)V_n(\LLL,S_n\ph)}.
\end{align*}
This yields
\[
\frac 1n \log \Lambda_n(\LLL,t\ph) \geq (1-t)\log\lambda + \frac 1n \log \Leb(Z^+)
-\frac{1+|t|}{n} V_n(\LLL,S_n\ph),
\]
and since $\ph$ is continuous the final term goes to $0$ as $n\to\infty$, whence
\begin{equation}\label{eqn:negderivative}
P(\LLL,t\ph) \geq (1-t)\log\lambda > 0
\end{equation}
for every $t<1$.  
\end{proof}

\bibliographystyle{amsalpha}
\bibliography{master}

\providecommand{\bysame}{\leavevmode\hbox to3em{\hrulefill}\thinspace}
\providecommand{\MR}{\relax\ifhmode\unskip\space\fi MR }
\providecommand{\MRhref}[2]{%
  \href{http://www.ams.org/mathscinet-getitem?mr=#1}{#2}
}
\providecommand{\href}[2]{#2}
\begin{thebibliography}{BBJL12}

\bibitem[BBJL12]{BJ12}
Alejo Barrio~Blaya and V{\'{\i}}ctor Jim{\'e}nez~L{\'o}pez, \emph{On the
  relations between positive {L}yapunov exponents, positive entropy, and
  sensitivity for interval maps}, Discrete Contin. Dyn. Syst. \textbf{32}
  (2012), no.~2, 433--466. \MR{2837068}

\bibitem[Ber88]{aB88}
Anne Bertrand, \emph{Specification, synchronisation, average length}, Coding
  theory and applications ({C}achan, 1986), Lecture Notes in Comput. Sci., vol.
  311, Springer, Berlin, 1988, pp.~86--95. \MR{960710 (89i:94022)}

\bibitem[BH86]{BH}
F.~Blanchard and G.~Hansel, \emph{Syst{\`e}mes cod{\'e}s}, Theor. Comput. Sci.
  \textbf{44} (1986), 17--49.

\bibitem[Bla89]{fB89}
F~Blanchard, \emph{{$\beta$-Expansions and symbolic dynamics}}, Theor. Comp.
  Sci. \textbf{65} (1989), 131--141.

\bibitem[Bow74]{Bo7}
R.~Bowen, \emph{Some systems with unique equilibrium states}, Math. Syst.
  Theory \textbf{8} (1974), 193--202.

\bibitem[BT09]{BT09}
Henk Bruin and Mike Todd, \emph{Equilibrium states for interval maps: the
  potential {$-t\log\vert Df\vert $}}, Ann. Sci. \'Ec. Norm. Sup\'er. (4)
  \textbf{42} (2009), no.~4, 559--600. \MR{2568876 (2011c:37079)}

\bibitem[CT12]{CT}
Vaughn Climenhaga and Daniel~J. Thompson, \emph{{Intrinsic ergodicity beyond
  specification: $\beta$-shifts, $S$-gap shifts, and their factors}}, Israel J.
  Math. (2012), To appear.

\bibitem[DKU90]{DKU}
M.~Denker, G.~Keller, and M.~Urbanski, \emph{On the uniqueness of equilibrium
  states for piecewise monotone mappings}, Studia Math. \textbf{97} (1990),
  27--36.

\bibitem[FP09]{FPf}
B.~Faller and C.-E. Pfister, \emph{{A point is normal for almost all maps
  $\beta x+\alpha\bmod1$ or generalized $\beta$-transformations}}, Ergodic
  Theory Dynam. Systems \textbf{29} (2009), no.~5, 1529--1547.

\bibitem[Hof77]{fH77}
Franz Hofbauer, \emph{Examples for the nonuniqueness of the equilibrium state},
  Transactions of the American Mathematical Society \textbf{228} (1977),
  223--241.

\bibitem[Hof87]{fH87}
Franz Hofbauer, \emph{Generic properties of invariant measures for simple
  piecewise monotonic transformations}, Israel J. Math. \textbf{59} (1987),
  no.~1, 64--80. \MR{923662 (89c:28020)}

\bibitem[Hof91]{fH91}
\bysame, \emph{{An inequality for the Ljapunov exponent of an ergodic invariant
  measure for a piecewise monotonic map of the interval}}, ``Lyapunov
  exponents'', Proceedings of the {S}econd {C}onference held in {O}berwolfach,
  {M}ay 28--{J}une 2, 1990 (Berlin) (L.~Arnold, H.~Crauel, and J.-P. Eckmann,
  eds.), Lecture Notes in Mathematics, vol. 1486, Springer-Verlag, 1991,
  pp.~viii+365. \MR{1178942 (93d:00037)}

\bibitem[Hu08]{hH08}
Huyi Hu, \emph{Equilibriums of some non-{H}\"older potentials}, Trans. Amer.
  Math. Soc. \textbf{360} (2008), no.~4, 2153--2190. \MR{MR2366978}

\bibitem[IT10]{IT10}
Godofredo Iommi and Mike Todd, \emph{Transience in dynamical systems},
  Preprint, 2010.

\bibitem[LM95]{LM}
D.~Lind and B.~Marcus, \emph{An introduction to symbolic dynamics and coding},
  Cambridge University Press, 1995.

\bibitem[Mai07]{Maia}
B.~Maia, \emph{{An equivalent system for studying periodic points of the
  $\beta$-transformation for a Pisot or a Salem number}}, Ph.D. thesis,
  University of Warwick, 2007,
  {http://www.warwick.ac.uk/$\sim$marcq/bmaia\_thesis.pdf}.

\bibitem[MP82]{MP82}
Nelson~G. Markley and Michael~E. Paul, \emph{Equilibrium states of grid
  functions}, Trans. Amer. Math. Soc. \textbf{274} (1982), no.~1, 169--191.
  \MR{670926 (84e:28027)}

\bibitem[Par60]{Pa}
W.~Parry, \emph{On the {$\beta$}-expansions of real numbers}, Acta Math. Hung.
  \textbf{11} (1960), 401--416.

\bibitem[Par64]{Pa2}
\bysame, \emph{Representations for real numbers}, Acta Math. Acad. Sci. Hungar.
  \textbf{15} (1964), 95--105. \MR{0166332 (29 \#3609)}

\bibitem[PS92]{PS}
T.~Prellberg and J.~Slawny, \emph{Maps of intervals with indifferent fixed
  points: thermodynamic formalism and phase transitions}, J. Stat. Phys.
  \textbf{66(1-2)} (1992), 503--514.

\bibitem[PS07]{PfS}
C.-E. Pfister and W.G. Sullivan, \emph{On the topological entropy of saturated
  sets}, Ergodic Theory Dynam. Systems \textbf{27} (2007), 929--956.

\bibitem[PS08]{PeSe}
Y.~Pesin and S.~Senti, \emph{{Equilibrium measures for maps with inducing
  schemes}}, J. Mod. Dyn. \textbf{2} (2008), no.~3, 397--427.

\bibitem[PW99]{PW}
M.~Pollicott and H.~Weiss, \emph{Multifractal analysis of {L}yapunov exponent
  for continued fraction and {Manneville-Pomeau} transformations and
  applications to {D}iophantine approximation}, Comm. Math. Phys. \textbf{207}
  (1999), 145--171.

\bibitem[PZ06]{PZ06}
Yakov Pesin and Ke~Zhang, \emph{Phase transitions for uniformly expanding
  maps}, J. Stat. Phys. \textbf{122} (2006), no.~6, 1095--1110. \MR{MR2219529
  (2007f:37043)}

\bibitem[Sar01]{oS01}
Omri~M. Sarig, \emph{Phase transitions for countable {M}arkov shifts}, Comm.
  Math. Phys. \textbf{217} (2001), no.~3, 555--577. \MR{1822107 (2002b:37040)}

\bibitem[Sch95]{fS95}
Fritz Schweiger, \emph{Ergodic theory of fibred systems and metric number
  theory}, Oxford Science Publications, The Clarendon Press Oxford University
  Press, New York, 1995. \MR{1419320 (97h:11083)}

\bibitem[Urb96]{U}
M.~Urbanski, \emph{Parabolic {Cantor} sets}, Fund. Math. \textbf{151(3)}
  (1996), 241--277.

\bibitem[Wal78]{Wa2}
P.~Walters, \emph{{Equilibrium states for $\beta $-transformations and related
  transformations}}, Math. Z. \textbf{159} (1978), no.~1, 65--88.

\bibitem[Wal82]{Wa}
P.~Walters, \emph{An introduction to ergodic theory}, Graduate Texts in
  Mathematics, vol.~79, Springer, New York, 1982.

\bibitem[You99]{lsY99}
Lai-Sang Young, \emph{Recurrence times and rates of mixing}, Israel J. Math.
  \textbf{110} (1999), 153--188. \MR{1750438 (2001j:37062)}

\end{thebibliography}

\end{document}